\documentclass[12pt]{amsart}
\usepackage{xcolor}
\usepackage{ulem}
\usepackage{dirtytalk}
\usepackage{amsmath,amssymb,setspace,nicefrac, yhmath, amscd,eucal}
\usepackage[active]{srcltx}
\usepackage[colorlinks, linkcolor=red, citecolor=blue, urlcolor=blue, hypertexnames=true]{hyperref}
\usepackage{amsrefs}
\usepackage{tikz-cd}
\setlength{\textwidth}{15cm} \setlength{\textheight}{21cm}
\setlength{\oddsidemargin}{0.5cm} \setlength{\topmargin}{0cm}
\setlength{\evensidemargin}{0.5cm} \setlength{\topmargin}{0cm}

\allowdisplaybreaks
\usepackage[all]{xy}

\newcommand{\C}{{\mathbb C}}
\newcommand{\R}{{\mathbb R}}
\newcommand{\Q}{{\mathbb Q}}

\newcommand{\N}{{\mathbb N}}

\newcommand{\cC}{{\mathcal C}}

\newcommand{\cM}{{\mathcal M}}
\newcommand{\cP}{{\mathcal P}}
\newcommand{\cN}{{\mathcal N}}

\newcommand{\Aut}{{\rm Aut}}

\newcommand{\RN}{{\rm RN}}
\def\d{\mathrm{d}}
\def\Prob{\mathrm{Prob}}
\def\id {\mathrm{id}}
\def\CA{\mathcal{A}}

\newtheorem{thm}{Theorem}[section]
\newtheorem{cor}[thm]{Corollary}
\newtheorem{lemma}[thm]{Lemma}

\newtheorem{definition}[thm]{Definition}
\newtheorem{example}[thm]{Example}
\newtheorem{remark}[thm]{Remark}

\newtheorem{proposition}[thm]{Proposition}

\theoremstyle{definition}

\begin{document}
\title[Intermediate Subalgebras in tensor products]{Non-commutative Intermediate Factor theorem associated with $W^*$-dynamics of product groups}
\author[Amrutam]{Tattwamasi Amrutam}
\address{Institute of Mathematics of the Polish Academy of Sciences, ul. Sniadeckich 8, 00-656, Warszawa, Poland}
\email{tattwamasiamrutam@gmail.com}
\author[Jiang]{Yongle Jiang}
\address{School of Mathematical Sciences, Dalian University of Technology, Dalian, 116024, China}
\email{yonglejiang@dlut.edu.cn}
\author[Zhou]{Shuoxing Zhou}\address{\'Ecole Normale Sup\'erieure\\ D\'epartement de math\'ematiques et applications\\ 45 rue d'Ulm\\ 75230 Paris Cedex 05\\ FRANCE}
\email{shuoxing.zhou@ens.psl.eu}
\date{\today}
\begin{abstract}
Let $G = G_{1} \times G_{2}$ be a product of two locally compact, second countable groups and 
$\mu \in \mathrm{Prob}(G)$ be of the form $\mu = \mu_{1} \times \mu_{2}$, 
where $\mu_{i} \in \mathrm{Prob}(G_{i})$. Let $(B,\nu_B)$ be the associated Poisson boundary. 
We show that every intermediate $G$-von Neumann algebra $\cM$ with
\[
\cN \subseteq \mathcal{M} \subseteq \cN \,\bar{\otimes}\, L^{\infty}(B,\nu)
\]
splits as a tensor product of the form 
$\cN\bar{\otimes}L^{\infty}(C,\nu_C)$, where $(C,\nu_C)$ is a $(G,\mu)$-boundary. Here, $\cN$ is a tracial von Neumann algebra on which $G$ acts trace-preservingly. This generalizes the Intermediate Factor Theorem proved by Bader--Shalom (\cite[Theorem~1.9]{BS06}) in the measurable setup. 

In addition, we give various other examples of the splitting phenomenon associated 
with $W^{*}$-dynamics. We also show that certain assumptions are necessary for 
the intermediate algebras to split, and ideals in the ambient tensor product algebra obstruct the splitting phenomenon. We also use the Master theorem from \cite{glasner2023intermediate} to resolve the second part of \cite[Problem~5.2]{jiangskalski} in the affirmative.
\end{abstract}

\maketitle

\tableofcontents
\section{Introduction}

Rigidity is one of the central themes at the intersection of ergodic theory, group actions, and operator algebras. 
At its broadest level, rigidity phenomena assert that highly symmetric dynamical systems have very limited intermediate structure, in the sense that they are often forced to split as products. 
The paradigm was already visible in Margulis’ celebrated normal subgroup theorem and the subsequent works of Nevo-Stuck-Zimmer~\cite{stuck1994stabilizers,nevo1999homogenous,nevo2002generalization}, where measurable cocycles and actions of higher rank lattices exhibit remarkable structural constraints, i.e., for certain Lie groups: when a semisimple Lie group acts on its Furstenberg boundary, any intermediate factor between the base space and the boundary extension is forced to split as a product.

These theorems, known as \textit{intermediate factor theorems}, lie at the core of measurable rigidity. The common principle is that boundaries, equipped with their strong ergodic and stationarity properties, leave no room for exotic intermediate structures.  

\medskip

In 2006, Bader and Shalom \cite{BS06} introduced and proved a striking new version of this principle: the \textit{Intermediate Factor Theorem} (IFT) for product groups. 
Let $G=G_1\times G_2$ be a product of two locally compact second countable groups, and let $(B,\nu)$ be the $(G,\mu)$-Poisson boundary for an admissible measure $\mu$. 
Bader--Shalom proved that if $(X,\xi)$ is an ergodic measure-preserving $G$-space, then every $G$-invariant intermediate factor
\[(X,\xi)\times (B,\nu)
 \ \to \ (Z,\zeta) \ \to (X,\xi)\ 
\]
splits as $(Z,\zeta)\cong (X,\xi)\times(C,\nu_C)$ for some $(G,\mu)$-boundary $(C,\nu_C)$. 
This result (henceforth called the \textit{Bader--Shalom IFT}) has far-reaching consequences in the context of normal subgroups of irreducible lattices in product groups. 
The Bader--Shalom theorem should be seen as the measurable cornerstone of the splitting principle. 

The present paper should be viewed as a \textit{spiritual successor} to \cite{amrutam2025splitting}, but in the von Neumann algebraic setting. 
Our central goal is to establish a \textit{noncommutative Intermediate Factor Theorem} (NC-IFT), which extends the measurable IFT of Bader--Shalom to tracial von Neumann algebras. 
In this framework, the role of the base probability space is played by a tracial von Neumann algebra $(\cN,\tau)$, and $G$ acts by trace-preserving automorphisms.
Our first main result is the following.
\begin{thm}
\label{thm:mainNCIFT}
Assume that $(G, \mu) = (G_1 \times G_2, \mu_1 \times \mu_2)$, where $(G_i,\mu_i)$ is a lcsc group with an admissible probability measure $(i=1,2)$.
Let $(B,\nu_B)$ be the $(G,\mu)$-Poisson boundary and $(\cN,\tau_\cN)$ be a trace preserving $G$-von Neumann algebra with separable predual. Suppose $(\cN, \tau_\cN)$ is $G_i$-ergodic for each $i=1,2$.  
Then for any $G$-invariant intermediate von Neumann algebra $\cM$ with
\[
\cN \subset \cM \subset \cN\bar{\otimes}L^\infty(B, \nu_B),
\]
the algebra $\cM$ splits, that is,
$$\cM=\cN\bar{\otimes} L^\infty(C,\nu_C)$$
for some $(G,\mu)$-boundary $(C,\nu_C)$.
\end{thm}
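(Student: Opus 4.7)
My plan is to follow the structural strategy of Bader--Shalom \cite{BS06}, translating classical disintegration arguments into noncommutative conditional expectations and Poisson transforms. Since $\mu = \mu_1 \times \mu_2$, the Poisson boundary factors as $(B, \nu_B) = (B_1 \times B_2, \nu_{B_1} \times \nu_{B_2})$, where $(B_j, \nu_{B_j})$ is the $(G_j, \mu_j)$-Poisson boundary, and correspondingly
\[
\cN \bar{\otimes} L^\infty(B, \nu_B) = \cN \bar{\otimes} L^\infty(B_1) \bar{\otimes} L^\infty(B_2),
\]
with $G_j$ acting trivially on the $B_{3-j}$-factor. I embed $\cN \bar{\otimes} L^\infty(B_j)$ into the ambient algebra by placing $1$ in the $B_{3-j}$-slot, and introduce the marginal intersections $\cM_j := \cM \cap \bigl(\cN \bar{\otimes} L^\infty(B_j)\bigr)$.

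The first substep is to show that each $\cM_j$ splits as $\cM_j = \cN \bar{\otimes} L^\infty(C_j, \nu_{C_j})$ for some $(G_j,\mu_j)$-boundary $(C_j, \nu_{C_j})$. On $\cN \bar{\otimes} L^\infty(B_j)$, the group $G_{3-j}$ acts trace-preservingly and ergodically on $\cN$ while acting trivially on $L^\infty(B_j)$; a noncommutative analogue, in the tracial setting, of the classical splitting of intermediate algebras under a product action with one ergodic pmp factor and one trivial factor --- in the spirit of \cite{amrutam2025splitting} --- forces $\cM_j$ to have tensor form $\cN \bar{\otimes} Q_j$. Imposing $G_j$-invariance then identifies $Q_j$ with $L^\infty(C_j)$ for a $G_j$-equivariant quotient $C_j$ of $B_j$, which is automatically a $(G_j,\mu_j)$-boundary.

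The heart of the proof is to prove $\cM = \cM_1 \vee \cM_2$. For this I use the normal $G_j$-equivariant conditional expectation
\[
E_j : \cN \bar{\otimes} L^\infty(B) \to \cN \bar{\otimes} L^\infty(B_j)
\]
obtained by integrating out the $B_{3-j}$-coordinate against $\nu_{B_{3-j}}$. Adapting the Bader--Shalom disintegration idea, I would first prove $E_j(\cM) \subset \cM_j$ via a noncommutative martingale/mean-ergodic argument along $\mu_{3-j}$-random walks, using $G_{3-j}$-ergodicity of $\cN$; and then, via a relative noncommutative Poisson representation for $\mu$-harmonic elements of $\cN \bar{\otimes} L^\infty(B)$, show that an element $x \in \cM$ is determined by the pair $(E_1(x), E_2(x))$ and therefore lies in $\cM_1 \vee \cM_2$. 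Setting $(C, \nu_C) := (C_1 \times C_2, \nu_{C_1} \times \nu_{C_2})$, which is a $(G,\mu)$-boundary as a $G$-equivariant quotient of $(B, \nu_B)$, then yields $\cM = \cN \bar{\otimes} L^\infty(C, \nu_C)$.

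The main obstacle is this generation step. In the commutative Bader--Shalom proof one disintegrates the intermediate factor pointwise over the product of the pmp base and $B_j$, and applies classical ergodic theorems to the resulting conditional measures; no such pointwise disintegration is available in the operator-algebraic setting. One must therefore establish a robust operator-valued relative Poisson/Furstenberg representation for $\mu_j$-harmonic elements of $\cN \bar{\otimes} L^\infty(B)$, showing in particular that bi-harmonicity with respect to both $\mu_1$ and $\mu_2$ collapses an element into $\cN$. Carrying this out using only the tracial structure of $\cN$ and the hypothesized $G_j$-ergodicity --- rather than factoriality or a stronger mixing hypothesis --- is what I anticipate to be the delicate technical heart of the argument.
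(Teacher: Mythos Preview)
Your outline diverges substantially from the paper's proof and, in fact, from the actual Bader--Shalom argument it claims to emulate; it contains a genuine gap at its acknowledged weak point.

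The paper does \emph{not} decompose $\cM$ via marginals over $B_1$ and $B_2$. Instead it introduces the \emph{$G$-Radon--Nikodym factor} $\cM_{\RN} \subset \cM$, generated by $\{h_g^{it}\}_{g \in G,\, t \in \R}$ where $h_g = (D(\tau \circ g):D\tau)$ for $\tau = (\tau_\cN \otimes \nu_B)|_\cM$; this is the noncommutative analogue of the ``invariants product functor'' that \cite{BS06} itself uses. The product structure enters only to prove $\cM_{\RN} \subset L^\infty(B,\nu_B)$: since $\tau$ and $\tau \circ g_1$ are both $\mu_2$-stationary traces, $h_{g_1}$ is $G_2$-invariant (Lemma~\ref{lem:h G-inv}); metric ergodicity of $(B_2,\nu_{B_2})$ together with $G_2$-ergodicity of $\cN$ (Lemma~\ref{lem:metric ergodic}) then forces $h_{g_1}^{it} \in L^\infty(B_1)$, and a chain-rule computation handles general $g$. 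Once $\cM_{\RN} = L^\infty(C,\nu_C)$ is established, the paper passes to the enveloping von Neumann algebra $C(B)^{**}$ (needed because the boundary map $\beta: B \to \mathrm{State}(\hat{\cM})$ of Theorem~\ref{thm:boundary map} need not take normal values) and uses the bijection of Lemma~\ref{lem:bijection stationary states and ucp maps} to show that $\beta$ factors through $C$; this yields a trace-preserving $G$-equivariant $\ast$-homomorphism $\Phi: \cM \to \cN \bar{\otimes} L^\infty(C)$ fixing both $\cN$ and $L^\infty(C)$, hence equal to the identity. At no stage is $\cM$ split as $\cM_1 \vee \cM_2$.

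Your proposal has two concrete gaps. First, the splitting of $\cM_j$ in your first substep rests on a Ge--Kadison-type input; the paper's own such result (Theorem~\ref{thm:vnsplitting}) requires $G$ to be countable ICC so that a crossed product is a factor, whereas Theorem~\ref{thm:mainNCIFT} is stated for arbitrary lcsc groups with no such hypothesis. Second, and more seriously, the generation step $\cM = \cM_1 \vee \cM_2$ is not established. The assertion that ``$x \in \cM$ is determined by the pair $(E_1(x), E_2(x))$'' is false on its face for general $x \in \cN \bar{\otimes} L^\infty(B_1) \bar{\otimes} L^\infty(B_2)$ --- marginals do not determine an element of a tensor product --- and you give no mechanism by which membership in $\cM$ repairs this. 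The ``operator-valued relative Poisson representation'' you invoke is exactly what the Radon--Nikodym and enveloping-algebra machinery of the paper is built to supply; you have correctly identified the missing ingredient without providing it.
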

We highlight a key ingredient in the proof of Theorem~\ref{thm:mainNCIFT}, which may be of independent interest. Let $G$ denote a locally compact second countable group,  and $\mu\in\Prob(G)$, an admissible probability measure. Let $\cM_\RN$ denote the \textbf{$G$-Radon-Nikodym factor} of $(\cM,\tau)$ (see Definition~\ref{def:rnfactor}). We show that $(\cM_\RN,\tau)$ is a $ G$-invariant von Neumann subalgebra of $\cM$ which is the image of a $ G$-equivariant conditional expectation.   
\begin{thm}
\label{thm:keyingredient}
Let $(\mathcal{M},\tau)$ be a $G$-von Neumann algebra with a not necessarily $G$-invariant trace. Then, $\cM_\RN$ is a $G$-invariant von Neumann subalgebra of $\cM$.  Moreover, the canonical $\tau$-preserving conditional expectation $\mathbb{E}_\RN:(\cM,\tau)\to(\cM_\RN,\tau)$ is $G$-equivariant.    
\end{thm}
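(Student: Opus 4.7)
The plan is to exploit the Radon--Nikodym cocycle of the (non-$G$-invariant) trace. For each $g\in G$, both $\tau$ and $\tau\circ g$ are normal faithful traces on $\cM$, so the tracial Radon--Nikodym theorem produces a positive operator $D_g$ affiliated with $Z(\cM)$ satisfying
\[
\tau(g(x)) = \tau(D_g\, x) \qquad \text{for all } x\in\cM.
\]
The natural interpretation of $\cM_\RN$ is the von Neumann subalgebra of $\cM$ generated by the spectral projections of $\{D_g : g\in G\}$ (equivalently, $W^*\{D_g\}$ when the $D_g$ are bounded).

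First I derive the cocycle identity. From
\[
\tau(D_{gh}\,x) = \tau((gh)(x)) = \tau(D_g\,h(x)) = \tau\bigl(h(h^{-1}(D_g)\,x)\bigr) = \tau\bigl(D_h\,h^{-1}(D_g)\,x\bigr),
\]
faithfulness of $\tau$ gives $D_{gh}=D_h\cdot h^{-1}(D_g)$, and hence $h^{-1}(D_g)=D_h^{-1}D_{gh}\in\cM_\RN$. So the generating family is $G$-stable, proving that $\cM_\RN$ is $G$-invariant. Since $\tau|_{\cM_\RN}$ is a faithful normal trace, Umegaki's theorem yields a unique $\tau$-preserving normal conditional expectation $\mathbb{E}_\RN:\cM\to\cM_\RN$, characterized by $\tau(y\,\mathbb{E}_\RN(x))=\tau(yx)$ for all $x\in\cM$, $y\in\cM_\RN$.

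For $G$-equivariance, given $x\in\cM$, $g\in G$, and $y\in\cM_\RN$, I compute
\[
\tau(y\,g(x)) = \tau\bigl(g(g^{-1}(y)\,x)\bigr) = \tau\bigl(D_g\,g^{-1}(y)\,x\bigr) = \tau\bigl(D_g\,g^{-1}(y)\,\mathbb{E}_\RN(x)\bigr),
\]
where the last equality uses $D_g\,g^{-1}(y)\in\cM_\RN$, established in the previous paragraph. The same chain with $g(\mathbb{E}_\RN(x))$ in place of $g(x)$ produces exactly the same right-hand side, so $\tau(y\,\mathbb{E}_\RN(g(x)))=\tau(y\,g(\mathbb{E}_\RN(x)))$ for every $y\in\cM_\RN$; since $g(\mathbb{E}_\RN(x))\in g(\cM_\RN)=\cM_\RN$, uniqueness forces $\mathbb{E}_\RN(g(x))=g(\mathbb{E}_\RN(x))$.

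The main obstacle is making the Radon--Nikodym cocycle rigorous when $D_g$ is unbounded, so that one must interpret products like $D_g\,g^{-1}(y)$ via spectral cutoffs or affiliated-operator calculus, and verify that the cocycle identity survives passage to the generated von Neumann algebra. Once $\cM_\RN$ is honestly $G$-invariant and the identity $D_g\,g^{-1}(\cM_\RN)\subseteq\cM_\RN$ is secured, the equivariance argument reduces to a formal manipulation using the defining property of $\mathbb{E}_\RN$.
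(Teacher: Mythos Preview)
Your argument is correct and takes a genuinely different, more elementary route than the paper. The paper proves $G$-invariance by identifying $\cM_\RN$ with the classical Radon--Nikodym factor $L^\infty(Z_\RN)$ of the center $Z(\cM)=L^\infty(Z)$ (Proposition~\ref{prop:Mrn=Zrn}) and appealing to the commutative theory; for equivariance of $\mathbb{E}_\RN$ (Proposition~\ref{prop:conditional expectation onto RN factor}) it passes to a countable dense discrete subgroup $G_0\le G$, forms the crossed product $(\cM,\tau)\rtimes G_0$, identifies its W$^*$-inclusion Radon--Nikodym factor over $L(G_0)$ as $(\cM_\RN,\tau)\rtimes G_0$ (Proposition~\ref{prop:Arn=Mrn}), invokes the Takesaki state-preserving conditional expectation there, and descends by uniqueness. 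Your direct cocycle computation bypasses this machinery entirely. The unboundedness of $D_g$ that you flag is not a genuine obstacle: since every $D_g$ is affiliated with the abelian $\cM_\RN\subset Z(\cM)$, one has $(h^{-1}(D_g))^{it}=D_h^{-it}D_{gh}^{it}\in\cM_\RN$ rigorously, and the identity $\tau(z\,x)=\tau(z\,\mathbb{E}_\RN(x))$ extends from $z\in\cM_\RN$ to $z\in L^1(\cM_\RN,\tau)$ via the spectral cutoffs $z\chi_{[0,n]}(|z|)$, which handles $z=D_g\,g^{-1}(y)$. The paper's detour buys a conceptual link to the crossed-product and W$^*$-inclusion framework used elsewhere in the article; your approach is self-contained.
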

From the non-commutative perspective, several remarkable papers have also established such rigidity, starting with Ge-Kadison, Zsido~\cite{ge1996tensor,zsido2000criterion}, to more recent remarkable non-commutative Nevo-Zimmer theorems and their generalizations~\cite{boutonnet2021stationary,bader2022charmenability, bader2023charmenability}. From a different direction, Boutonnet–Houdayer~\cite{boutonnet2023noncommutative} proved a non-commutative intermediate factor theorem for lattices in product groups by analyzing crossed-product von Neumann algebras. More recently, Glasner-Weiss~\cite{glasner2023intermediate} has established a \say{Master theorem} which specifies conditions under which intermediate algebras split. Using it, we prove an intermediate factor theorem in a concrete case involving $SL_2(\mathbb{Z})$. 
\begin{thm}
\label{thm:introtoricaction}
Let $G=SL_2(\mathbb{Z})\curvearrowright (Y,\nu)=(\mathbb{T}^{G},\mu_0^G)$ be the Bernoulli shift and $G\curvearrowright (X,\mu)=(\mathbb{T}^2,\mu)$ the standard action. Let $G\curvearrowright (Q,\eta)$ be an intermediate factor between $G\curvearrowright (Y,\nu)\times(X,\mu)$ and $G\curvearrowright (Y,\nu)$. Then there exists a factor map $G\curvearrowright (X,\mu)\to (Z,\zeta)$ such that $G\curvearrowright (Q,\eta)$ is measurably conjugate to $G\curvearrowright (Y,\nu)\times(Z,\zeta)$. 
\end{thm}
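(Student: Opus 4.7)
The plan is to invoke the Master Theorem of Glasner--Weiss \cite{glasner2023intermediate}, which provides abstract criteria under which every $G$-invariant intermediate factor in a product system
\[
(Y,\nu)\times(X,\mu) \to (Q,\eta) \to (Y,\nu)
\]
is forced to split as $(Y,\nu)\times(Z,\zeta)$ for some factor $(Z,\zeta)$ of $(X,\mu)$. The first step is to recall and unpack the precise hypothesis of that Master Theorem and translate it into our concrete setting; the second step is to verify that hypothesis using the specific structure of each of the two $SL_2(\mathbb{Z})$-actions.

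On the base side, the Bernoulli shift $SL_2(\mathbb{Z})\curvearrowright(\mathbb{T}^G,\mu_0^G)$ is strongly mixing of all orders and enjoys the range of joining/disjointness properties characteristic of Bernoulli systems, including disjointness from zero-entropy systems and from distal actions. On the fiber side, $SL_2(\mathbb{Z})\curvearrowright\mathbb{T}^2$ is an algebraic action on a compact abelian group by continuous automorphisms; since $SL_2(\mathbb{Z})$ acts $\mathbb{Q}$-irreducibly on $\mathbb{R}^2$, the only proper closed $SL_2(\mathbb{Z})$-invariant subgroups of $\mathbb{T}^2$ are the finite torsion subgroups $\frac{1}{n}\mathbb{Z}^2/\mathbb{Z}^2$. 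This makes the lattice of ``algebraic'' factors of $(X,\mu)$ explicit, and combined with the joining properties of the Bernoulli side it should suffice to verify the hypothesis of the Master Theorem and conclude the splitting $Q\cong Y\times Z$.

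I expect the main obstacle to be precisely this verification in an entropy-positive setting. Both actions have positive entropy, so the classical Furstenberg disjointness-from-zero-entropy arguments are unavailable, and one must instead work with more delicate joining arguments. One natural route is to combine the $K$-property of the Bernoulli side with the algebraic rigidity of the toral action via a relative Pinsker or relative Kronecker analysis, or to describe explicitly the ergodic self-joinings of $(Y,\nu)\times(X,\mu)$ over $(Y,\nu)$. An alternative strategy is to first extract from $(Q,\eta)$ a maximal $SL_2(\mathbb{Z})$-equivariant quotient of $(X,\mu)$ by analyzing the conditional expectation of $L^\infty(Q,\eta)$ onto $L^\infty(X,\mu)$, and then show, using the strong mixing of the Bernoulli action, that the complementary information in $(Q,\eta)$ is entirely measurable with respect to $(Y,\nu)$.
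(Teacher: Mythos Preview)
Your plan correctly identifies the Glasner--Weiss Master Theorem as the engine, but you have misdiagnosed the main obstacle and thereby missed the key idea of the actual proof. You write that ``both actions have positive entropy, so the classical Furstenberg disjointness-from-zero-entropy arguments are unavailable.'' This is only true for the full $SL_2(\mathbb{Z})$-action. The paper's crucial move is to restrict to the unipotent subgroup
\[
H=\begin{pmatrix}1&\mathbb{Z}\\0&1\end{pmatrix}\cong\mathbb{Z}.
\]
For this $\mathbb{Z}$-action, $H\curvearrowright\mathbb{T}^2$ is a distal skew product and therefore has \emph{zero} entropy, while $H\curvearrowright\mathbb{T}^G$ is still a Bernoulli $\mathbb{Z}$-shift and hence has completely positive entropy. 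Furstenberg's classical disjointness theorem then applies directly: any $H$-joining (and hence any $G$-joining) between $(Y,\nu)$ and an ergodic quasi-factor of $(X,\mu)$ must be the product. No relative Pinsker analysis, no description of self-joinings over $Y$, and no use of the lattice of closed invariant subgroups of $\mathbb{T}^2$ is needed.

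There is also a second step you do not address. The Master Theorem produces $(Q,\eta)\cong(Y,\nu)\times(Z,\zeta)$ with $(Z,\zeta)$ only a \emph{quasi-factor} of $(X,\mu)$, not a priori a genuine factor. The paper upgrades this by again passing to $H$: since $H\curvearrowright(X,\mu)$ is the Pinsker factor of $H\curvearrowright(Y\times X)$ (Bernoulli has trivial Pinsker, the product formula for Pinsker factors applies), and $H\curvearrowright(Z,\zeta)$ has zero entropy, the $H$-factor $(Z,\zeta)$ of $Y\times X$ must factor through $(X,\mu)$. As $L^\infty(Z,\zeta)$ is already $G$-invariant inside $L^\infty(X,\mu)$, this gives the desired $G$-factor map $X\to Z$. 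Your proposal, as written, is a sketch that stops at the obstacle rather than resolving it; the restriction-to-$H$ trick is the missing idea.
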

The second-named author, along with Skalski, initiated a discussion in \cite{jiangskalski} on maximal Haagerup subalgebras, where several concrete examples of maximal von Neumann subalgebras were presented. Using Theorem~\ref{thm:introtoricaction}, we resolve the second part of \cite[Problem~5.2]{jiangskalski}. In particular, we show the following. 
\begin{cor} 
\label{cor:mainmaximalhaagerup}
Let $G=SL_2(\mathbb{Z})\curvearrowright (Y,\nu)=(\mathbb{T}^{G},\mu_0^G)$ be the Bernoulli shift and $G\curvearrowright (X,\mu)=(\mathbb{T}^2,\mu)$ the standard action. Then, $L^\infty(Y,\nu)\rtimes G$ is a maximal Haagerup subalgebra of $L^\infty((Y,\nu)\times(X,\mu))\rtimes G$. 
\end{cor}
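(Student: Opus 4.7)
Plan for the proof of Corollary~\ref{cor:mainmaximalhaagerup}. I assume for contradiction that there exists an intermediate von Neumann algebra
\[L^\infty(Y,\nu)\rtimes G\ \subsetneq\ \mathcal{M}\ \subseteq\ L^\infty((Y,\nu)\times(X,\mu))\rtimes G\]
with the Haagerup property, and aim to derive a contradiction. The first step is a Galois-type reduction to the measure-theoretic setting. Set $\mathcal{A}:=\mathcal{M}\cap L^\infty(Y\times X)$. Since $\mathcal{M}\supseteq L^\infty(Y)\rtimes G$ contains every unitary $u_g$, the subalgebra $\mathcal{M}$ is stable under $\mathrm{Ad}(u_g)$, and therefore $\mathcal{A}$ is a $G$-invariant intermediate algebra with $L^\infty(Y)\subseteq\mathcal{A}\subseteq L^\infty(Y\times X)$. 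The diagonal $G$-action on $L^\infty(Y\times X)$ is essentially free---as the Bernoulli action $G\curvearrowright Y$ already is---so a standard crossed-product Galois correspondence for free actions forces $\mathcal{M}=\mathcal{A}\rtimes G$.

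Applying Theorem~\ref{thm:introtoricaction} to the intermediate $G$-factor associated with $\mathcal{A}$ gives $\mathcal{A}=L^\infty(Y,\nu)\bar{\otimes}L^\infty(Z,\zeta)$ for some $G$-factor $(Z,\zeta)$ of $(X,\mu)=(\mathbb{T}^2,\mu)$, which is non-trivial because $\mathcal{M}\supsetneq L^\infty(Y)\rtimes G$. The $G$-equivariant embedding $1\otimes L^\infty(Z)\hookrightarrow L^\infty(Y)\bar{\otimes}L^\infty(Z)$ (note that $g\cdot 1=1$) realizes $L^\infty(Z)\rtimes G$ as a von Neumann subalgebra of $\mathcal{M}$, and integration over the $Y$-variable yields a normal, faithful, trace-preserving conditional expectation $E_Y\colon\mathcal{M}\to L^\infty(Z)\rtimes G$. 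The Haagerup property therefore descends from $\mathcal{M}$ to $L^\infty(Z)\rtimes G$.

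It remains to show that $L^\infty(Z)\rtimes G$ cannot have the Haagerup property whenever $Z$ is a non-trivial $G$-factor of $(\mathbb{T}^2,SL_2(\mathbb{Z}))$. I would analyze the inclusion $L^\infty(Z)\subseteq L^\infty(\mathbb{T}^2)$ via Pontryagin duality: the characters of $\mathbb{T}^2$ lying in $L^\infty(Z)$ form a $G$-invariant subgroup of $\widehat{\mathbb{T}^2}=\mathbb{Z}^2$, and the $\mathbb{Q}$-irreducibility of the standard representation $SL_2(\mathbb{Z})\curvearrowright\mathbb{Q}^2$ constrains such a subgroup to be $\{0\}$ or $n\mathbb{Z}^2$ for some $n\geq 1$. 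Combined with a measurable rigidity statement identifying every non-trivial ergodic $G$-factor of $(\mathbb{T}^2,\mu)$ with $\mathbb{T}^2/\mathbb{T}^2[n]\cong(\mathbb{T}^2,\mu)$, this gives $L^\infty(Z)\rtimes G\cong L^\infty(\mathbb{T}^2)\rtimes G=L(SL_2(\mathbb{Z})\ltimes\mathbb{Z}^2)$. Popa's transfer of the Kazhdan--Margulis--Burger relative property (T) of the pair $(\mathbb{Z}^2,SL_2(\mathbb{Z})\ltimes\mathbb{Z}^2)$ to the von Neumann algebra setting then shows that $(L^\infty(Z),L^\infty(Z)\rtimes G)$ has relative property (T), which is incompatible with $L^\infty(Z)\rtimes G$ having the Haagerup property, yielding the desired contradiction.

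The main obstacle is the rigidity step in the third paragraph: identifying the a priori merely measurable factor $(Z,\zeta)$ supplied by Theorem~\ref{thm:introtoricaction} with the algebraic factor $\mathbb{T}^2/\mathbb{T}^2[n]$. An alternative route that sidesteps this classification is to prove directly that $(B,B\rtimes G)$ has the relative property (T) for every $G$-invariant diffuse sub-von Neumann algebra $B\subseteq L^\infty(\mathbb{T}^2)$, by restricting the representation-theoretic argument for $SL_2(\mathbb{Z})\ltimes\mathbb{Z}^2$ to the $L^2(B)$-subspace and combining with Popa's machinery.
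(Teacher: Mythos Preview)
Your proposal is correct and follows the same architecture as the paper's proof: reduce via the Galois correspondence for essentially free actions (the paper cites Suzuki's theorem \cite{suzuki} for this step), apply Theorem~\ref{thm:introtoricaction} to split the resulting intermediate factor as $Y\times Z$ with $(Z,\zeta)$ a non-trivial $G$-factor of $(\mathbb{T}^2,\mu)$, and then argue that the expected subalgebra $L^\infty(Z)\rtimes G$ is non-Haagerup. The only difference lies in this last step: the paper directly invokes \cite[Lemma~3.5]{jiangskalski}, which already records that $L^\infty(Z,\zeta)\rtimes SL_2(\mathbb{Z})$ fails the Haagerup property for every non-trivial $G$-factor $(Z,\zeta)$ of $(\mathbb{T}^2,\mu)$, via precisely the relative property~(T) mechanism you sketch as your approach~B. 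Your approach~A (classifying the measurable $G$-factors of $\mathbb{T}^2$ by Pontryagin duality) is an unnecessary detour; you are right to be wary of it, since showing that an arbitrary $G$-invariant sub-$\sigma$-algebra of $L^\infty(\mathbb{T}^2)$ is generated by the characters it contains is not immediate. The cited lemma, and your own approach~B, bypass this classification entirely.
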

While much of this paper emphasizes situations in which intermediate subalgebras split, it is equally important to understand the mechanisms that obstruct splitting. Historically, the presence of invariant ideals is an obstruction to splitting.
We show that invariant ideals in the ambient tensor product can obstruct splitting.
\begin{thm}(Theorem \ref{thm:gensplitidealvna})
\label{thm:obstrucmain} Let $(\mathcal{M},\tau_{\mathcal{M}})$ and $(\mathcal{N},\tau_{\mathcal{N}})$ be trace-preserving $G$-von Neumann algebras and $\mathcal{P}=\mathcal{M}\bar\otimes\mathcal{N}$. Assume that $G\curvearrowright(\mathcal{M},\tau_{\mathcal{M}})$ and $G\curvearrowright(\mathcal{N},\tau_{\mathcal{N}})$ are ergodic. Let $I\triangleleft \mathcal{P}$ be a non-trivial $G$-invariant ideal of the form $q\mathcal{P}$ for some $q\in\mathcal{Z}(\mathcal{P})$. If $(\mathcal{M},\tau_{\mathcal{M}})=L^\infty(X,\nu)$ is abelian and $(\mathcal{N},\tau_{\mathcal{N}})$ non-commutative, then $\mathcal{V}_{I,\cM}=\cM+q\cP$ does not split. 
\end{thm}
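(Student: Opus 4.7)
The plan is to argue by contradiction: suppose $\cV_{I,\cM}=\cM\bar{\otimes}\cQ$ for some von Neumann subalgebra $\cQ\subseteq\cN$. The $\tau$-preserving conditional expectation $E_\cN:=\tau_\cM\otimes\mathrm{id}_\cN:\cP\to\cN$ identifies $\cQ=\cV_{I,\cM}\cap(1\bar{\otimes}\cN)$, which is automatically $G$-invariant. I will derive a contradiction by showing that $\cQ$ must be simultaneously abelian and non-abelian.

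First, I would prove that $\cQ\subseteq\mathcal{Z}(\cN)$. Take $n\in\cQ$ and write $1\otimes n=m\otimes 1+qp$ for some $m\in\cM$ and $p\in\cP$. Since $\cM$ is abelian, $1-q\in\mathcal{Z}(\cP)=\cM\bar{\otimes}\mathcal{Z}(\cN)$, so multiplying yields $(1-q)(1\otimes n-m\otimes 1)=0$. Applying the $\mathcal{Z}(\cP)$-linear operator $\mathrm{id}\otimes(I-E_{\mathcal{Z}(\cN)})$ annihilates the $m\otimes 1$ summand and produces
\[
(1-q)(1\otimes n_0)=0,\qquad n_0:=n-E_{\mathcal{Z}(\cN)}(n).
\]
Disintegrating $\cN=\int^{\oplus}\cN_y\,d\eta(y)$ over $\mathcal{Z}(\cN)=L^\infty(Y,\eta)$ and writing $q=\mathbf{1}_A$ for the $G$-invariant set $A\subseteq X\times Y$ attached to $q$, this identity reduces fiber-wise to the dichotomy: for a.e.\ $y$, either the slice $A_y:=\{x:(x,y)\in A\}$ is conull in $X$, or $n_0(y)=0$. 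The set $Y_0:=\{y:A_y=X \text{ a.e.}\}$ is $G$-invariant (using $A_{gy}=gA_y$ under the diagonal action), and $G$-ergodicity of $\cN$ descends to $\mathcal{Z}(\cN)$; hence $Y_0$ is null or conull. The conull case would give $q=1$ and $I=\cP$, contradicting non-triviality of $I$. Therefore $Y_0$ is null, $n_0=0$, and $n\in\mathcal{Z}(\cN)$, so $\cQ\subseteq\mathcal{Z}(\cN)$ is abelian.

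Second, I would show $q\cP$ (and hence $\cV_{I,\cM}$) is non-abelian, contradicting that $\cM\bar{\otimes}\cQ$ is abelian. If $q\cP$ were abelian, then $q\cdot(1\otimes[n,n'])=0$ for every $n,n'\in\cN$; applying $E_\cN$ yields $z[n,n']=0$ with $z:=E_\cN(q)\in\mathcal{Z}(\cN)_+$, so the support projection $s(z)$ satisfies that $s(z)\cN$ is abelian. Because $E_\cN$ is $G$-equivariant and $q$ is $G$-invariant, $s(z)$ is $G$-invariant; it is nonzero since $\tau_\cN(z)=\tau_\cP(q)>0$. Then $G$-ergodicity on $\mathcal{Z}(\cN)$ forces $s(z)=1$, so $\cN$ itself would be abelian, contradicting the hypothesis. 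Combining the two steps gives the final contradiction.

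The step I expect to be most delicate is the $G$-equivariant direct-integral decomposition of $\cN$ over its center in the abelian-side argument: one must carefully verify that the fibers $\cN_y$, the slices $A_y$, and hence the set $Y_0$ transform $G$-covariantly, and that the operator identity $(1-q)(1\otimes n_0)=0$ rigorously descends to the fiber-wise dichotomy used above. Once the disintegration is set up correctly, the remaining ergodicity arguments on $\mathcal{Z}(\cN)$ are routine.
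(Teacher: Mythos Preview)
Your proof is correct but follows a genuinely different route from the paper's. The paper (via its structural Proposition on $\mathcal{V}_{I,\mathcal{M}}$) first shows that if $\mathcal{V}_{I,\mathcal{M}}$ splits then in fact $\mathcal{V}_{I,\mathcal{M}}=\mathcal{P}$: since $\mathbb{E}_\cN(\cM\bar\otimes\cQ)=\cQ\subset\mathcal{V}_{I,\cM}$ and $\overline{\mathbb{E}_\cN(q\cP)}^{\mathrm{WOT}}=\cN$, one gets $\cN\subset\mathcal{V}_{I,\cM}$. Writing $p=1-q$, this gives $p\mathcal{P}=p\mathcal{M}$; ergodicity then forces $\mathbb{E}_\cM(p)=\mathbb{E}_\cN(p)=\tau(p)\in(0,1)$, so $p$ is separating for both legs, and the rule $np=\phi(n)p$ defines an injective trace-preserving $*$-homomorphism $\phi:\mathcal{N}\to\mathcal{M}=L^\infty(X,\nu)$, contradicting non-commutativity of $\mathcal{N}$. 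This embedding is extra structural information that the paper reuses in the abelian--abelian situation. Your argument instead shows directly that $\mathcal{V}_{I,\mathcal{M}}$ would have to be abelian (since $\cM$ is abelian and $\cQ\subset\mathcal{Z}(\cN)$) while containing the non-abelian $q\mathcal{P}$; this is more tailored to the abelian/non-abelian contrast and yields less side information, but is perfectly valid.

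One simplification: the direct-integral step you flag as delicate is entirely avoidable. Apply $E_\cN=\tau_\cM\otimes\mathrm{id}$ (which is $\cN$-bimodular and $G$-equivariant) to $(1-q)(1\otimes n_0)=0$ to obtain $(1-E_\cN(q))\,n_0=0$. Since $q$ is $G$-invariant, $E_\cN(q)\in\cN^G=\C$ by ergodicity, and $\tau_\cN(E_\cN(q))=\tau(q)<1$; hence $n_0=0$ immediately, with no disintegration needed.
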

In other words, when one factor is abelian while the other is non-commutative, invariant ideals can generate intermediate algebras which do not split. This shows that the splitting principle has genuine limitations and that ideals represent natural obstructions. A direct consequence is that whenever such an invariant ideal exists, one can always build a non-splitting intermediate algebra. 

The probabilistic framework of \textit{invariant random subgroups} (IRSs), initiated by Ábert--Glasner--Virág \cite{abert2014kesten}, provided a new way of encoding intermediate objects as random invariants. This has been incorporated into the non-commutative setup by the first-named author, along with Hartman and Oppelmayer~\cite{amrutam2025amenable} and further generalized by the third-named author in \cite{zhou2024noncommutative}.  Just as IRSs encode random subgroups invariant under conjugation, IRAs encode random von Neumann subalgebras invariant under group actions. In an ongoing joint work, we study the interplay between the \textit{Intermediate Factor Theorem} and the IRAs.  
\subsection*{Organization of this paper.}In addition to this section, our paper has four other sections. In the first part of Section~\ref{sec:prelinaries}, we establish the framework under which we work. We spend a considerable time in explaining the $G$-Radon-Nikodym factor associated with a $G$-von Neumann algebra (see the discussion in Subsection~\ref{Preliminaries:RN factor} and Definition~\ref{def:rnfactor}). We also show that the associated conditional expectation onto the $G$-Radon-Nikodym factor is $G$-equivariant (Proposition~\ref{prop:conditional expectation onto RN factor}). We also prove Theorem~\ref{thm:keyingredient} while we are here. We finish this section by giving various examples of splitting under some additional assumptions (see Theorem~\ref{thm:vnsplitting} and Theorem~\ref{thm:ben}). Section~\ref{sec:genNCIFT} is devoted to the proof of Theorem~\ref{thm:mainNCIFT} in its entirety. In Section~\ref{sec:obstruction}, we study how ideals are an obstruction to the splitting of the intermediate algebras. In particular, it is here that we prove Theorem~\ref{thm:obstrucmain}. Finally, we establish Theorem~\ref{thm:introtoricaction} and use it to establish Corollary~\ref{cor:mainmaximalhaagerup} in Section~\ref{sec:SL_2(Z)}.

\subsection*{Acknowledgments}  The authors are grateful for helpful discussions with Yair Hartman, Cyril Houdayer, Mehrdad Kalantar, Hanfeng Li, Zhenxing Lian, and Runlin Zhang.
Y.J. is partially supported by National Natural Science Foundation of
 China (Grant No. 12471118).

\section{Preliminaries}\label{sec:prelinaries} The statements and proofs of our results rely on tools from ergodic theory and operator algebras. In this section, we review the necessary definitions and set up the framework within which our arguments will take place.
\subsection*{Convenience} 
Throughout this paper, we assume that all von Neumann algebras have separable preduals, except in the proof of Theorem \ref{thm:NC IFT}, where enveloping von Neumann algebras are considered. When we write $L^\infty(X,\nu_X)$ for a $G$-space $(X,\nu_X)$, we mean the function algebra together with the state given by $\nu_X$. The same convention applies to crossed products $(M,\tau)\rtimes G$, endowed with the state $(\Sigma_{g\in G} a_g\lambda_g\mapsto \tau(a_e))$, and to tensor products $(M,\tau_M)\bar{\otimes}(N,\tau_N)$, endowed with the state $\tau_M\otimes\tau_N$.

\subsection{Poisson boundaries and stationary states}\label{Preliminaries:RN factor:Poisson boundaries}
Let $G$ be a locally compact second countable (lcsc) group with left Haar measure $m_G$, and $\mu\in\Prob(G)$ be an admissible Borel probability measure, i.e., $\mu$ is absolutely continuous with respect to the left Haar measure $m_G$ and $\cup_{n\geq 1}(\mathrm{supp\,} \mu)^n=G$. 

We recall the notion of a $(G,\mu)$-Poisson boundary, which is firstly introduced in \cite{Fur63a,Fur63b}. Define the \textbf{averaging Markov operator} associated with $\mu$:
$$P_\mu: L^\infty(G,m_G)\to  L^\infty(G,m_G): \ F\mapsto \left(g\mapsto\int_G F(gh)\d \mu(h)\right).$$
The space of \textbf{$(G,\mu)$-harmonic functions} is defined to be
$$\mathrm{Har}( G ,\mu)=\{F\in  L^\infty(G,m_G)|P_\mu(F)=F\}.$$
The \textbf{$(G,\mu)$-Poisson boundary} is defined to be the unique $(G,\mu)$-space $(B,\nu_B)$ such that the associated \textbf{Poisson transform}
$$P_{\nu_B}:L^\infty(B,\nu_B)\to \mathrm{Har}( G ,\mu): \ f\mapsto\left(g\mapsto\int_B f( gx)\mathrm{d} \nu_B(x)\right)$$   
is a completely isometric isomorphism.

The following theorem is \cite[Theorem 2.16]{BS06}, which also appears as \cite[Theorem 2.5]{boutonnet2021stationary}.
\begin{thm}\label{thm:boundary map}
Let $(B,\nu_B)$ be the $(G,\mu)$-Poisson boundary. Let $\cC$ be a compact convex affine $G$-space. Then for any $\mu$-stationary point $c\in\cC$ (i.e., $\int_G gc\,\d\mu(g)=c$), there exists an essentially unique $G$-equivariant measurable map $\beta:B\to \cC$ with $\int_B\beta_b\d\nu_B(b)=c$.
\end{thm}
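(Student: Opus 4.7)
The plan is to construct $\beta$ by converting the affine structure of $\cC$ into $\mu$-harmonic functions on $G$, applying the Poisson transform isomorphism, and then disintegrating the resulting linear map into a point-valued map.

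For each continuous affine function $\phi \in A(\cC)$, define $h_\phi(g) := \phi(gc)$. Using affinity of $\phi$ and $\mu$-stationarity of $c$, one checks
\[(P_\mu h_\phi)(g) = \int_G \phi(ghc)\,\d\mu(h) = \phi\Bigl(g\int_G hc\,\d\mu(h)\Bigr) = \phi(gc) = h_\phi(g),\]
and $h_\phi$ is bounded since $\cC$ is compact. Hence $h_\phi \in \mathrm{Har}(G,\mu)$, and by the completely isometric isomorphism $P_{\nu_B}$, there is a unique $\tilde\phi \in L^\infty(B,\nu_B)$ with $P_{\nu_B}\tilde\phi = h_\phi$. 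The assignment $\Phi:A(\cC)\to L^\infty(B,\nu_B)$, $\phi \mapsto \tilde\phi$, is then linear, positive, unital, and $G$-equivariant (with $G$ acting on $A(\cC)$ by $(g\phi)(x)=\phi(g^{-1}x)$, and on $L^\infty(B,\nu_B)$ by translation), the last point following from the $G$-equivariance of the Poisson transform.

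Next, I would extract a measurable point-valued map. Passing to a countable separating family of affine functions (using separability) one may assume $\cC$ is metrizable, so $A(\cC)$ is separable. Pick a countable $\Q$-linear dense subspace $D \subset A(\cC)$ containing $1$, and fix representatives of $\tilde\phi$ for $\phi \in D$. Off a single $\nu_B$-null set, $\phi \mapsto \tilde\phi(b)$ is a $\Q$-linear, positive, unital functional on $D$, hence by continuity extends uniquely to a positive unital linear functional $L_b$ on $A(\cC)$. A Hahn-Banach separation argument applied to the compact convex image of $\cC$ in $A(\cC)^*$ (using positivity to bound $L_b(\phi) \leq \sup_\cC \phi$) identifies $L_b$ with evaluation at a unique point $\beta(b) \in \cC$. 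Measurability of $\beta$ follows from measurability of the coordinates $\phi\circ\beta = \tilde\phi$ for $\phi \in D$ together with metrizability of $\cC$.

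Finally, I would verify the required properties. The $G$-equivariance of $\beta$ follows directly from that of $\Phi$ upon applying both sides to each $\phi \in A(\cC)$. For the barycenter identity, for any $\phi \in A(\cC)$,
\[\phi\Bigl(\int_B \beta_b\,\d\nu_B(b)\Bigr) = \int_B \phi(\beta_b)\,\d\nu_B(b) = \int_B \tilde\phi(b)\,\d\nu_B(b) = h_\phi(e) = \phi(c),\]
and since $A(\cC)$ separates points, the barycenter equals $c$. Essential uniqueness follows from injectivity of $\Phi$: any candidate $\beta'$ satisfies $\phi\circ\beta' = \tilde\phi = \phi\circ\beta$ a.e. for each $\phi \in D$, forcing $\beta' = \beta$ a.e. The main technical obstacle is the disintegration step: carefully aggregating countably many null-set modifications so that $\beta(b)$ is well-defined for $\nu_B$-a.e. $b$, and invoking the Choquet-type identification of positive unital functionals on $A(\cC)$ with points of $\cC$. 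Both are standard but require attention to bookkeeping.
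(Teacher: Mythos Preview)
The paper does not supply its own proof of this statement: it is quoted as \cite[Theorem~2.16]{BS06} (also \cite[Theorem~2.5]{boutonnet2021stationary}) and invoked as a known result. There is therefore no in-paper argument to compare your proposal against.

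Your sketch is the standard proof and is correct in outline. Two points could be sharpened. First, the identification of a positive unital functional $L_b$ on $A(\cC)$ with evaluation at a point of $\cC$ is really a barycenter argument (extend $L_b$ to a state on $C(\cC)$ via Riesz, represent it by a probability measure on $\cC$, and take the barycenter), not Hahn--Banach separation alone. Second, for essential uniqueness you must actually check that any $G$-equivariant $\beta'$ with $\int_B \beta'_b\,\d\nu_B(b)=c$ satisfies $P_{\nu_B}(\phi\circ\beta')=h_\phi$ (the same computation as in the existence step, using equivariance of $\beta'$ and affinity of $\phi$), whence $\phi\circ\beta'=\tilde\phi$ a.e.\ by injectivity of the Poisson transform; your phrase ``injectivity of $\Phi$'' obscures where the hypotheses on $\beta'$ enter.
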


\begin{remark}
Let $\cM$ be a $G$-von Neumann algebra and $\varphi\in \mathrm{State}(\cM)$ be a $\mu$-stationary state. Then by Theorem \ref{thm:boundary map}, there exists a $G$-equivariant measurable map $\beta:B\to \mathrm{State}(\cM)$ with $\int_B\beta_b\d\nu_B(b)=\varphi$. But note that even when $\varphi$ is normal, we do not necessarily have $\nu_B$-a.e. $\beta_b$ is normal. This is the reason why we consider the enveloping von Neumann algebra $C(B)^{**}$ in the proof of Theorem \ref{thm:NC IFT}.
\end{remark}

\begin{lemma}\label{lem:bijection stationary states and ucp maps}
Let $A$ be a $G$-C$^*$-algebra. Denote by $\mathcal{S}_\mu(A)$ the set of $\mu$-stationary states on $A$ and by $\mathrm{UCP}_G(A,L^\infty(B,\nu_B))$ the set of $G$-equivariant ucp maps from $A$ to $L^\infty(B,\nu_B)$. Then the map
$$\Phi:\mathrm{UCP}_G(A,L^\infty(B,\nu_B))\to\mathcal{S}_\mu(A):\cP\mapsto\nu_B\circ\cP$$
is a bijection.
\end{lemma}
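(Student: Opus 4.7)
The plan is to view this lemma through Theorem~\ref{thm:boundary map}, applied to the compact convex affine $G$-space $\cC := \mathrm{State}(A)$ equipped with its weak-$*$ topology and the natural $G$-action. Under this identification, a $\mu$-stationary state on $A$ is precisely a $\mu$-stationary point of $\cC$, and a $G$-equivariant measurable map $\beta : B \to \cC$ corresponds to a $G$-equivariant ucp map $\cP : A \to L^\infty(B,\nu_B)$ via the pointwise assignment $\cP(a)(b) := \beta_b(a)$.

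First I would verify that $\Phi$ is well-defined. For $\cP \in \mathrm{UCP}_G(A, L^\infty(B,\nu_B))$, a direct calculation using $G$-equivariance of $\cP$ together with $\mu$-stationarity $\mu \ast \nu_B = \nu_B$ gives
\[
\int_G \nu_B\bigl(\cP(g^{-1}a)\bigr)\,\d\mu(g) = \int_G \nu_B\bigl(g^{-1}\cP(a)\bigr)\,\d\mu(g) = (\mu \ast \nu_B)(\cP(a)) = \nu_B(\cP(a)),
\]
so $\nu_B \circ \cP$ is $\mu$-stationary.

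For surjectivity, given $\varphi \in \mathcal{S}_\mu(A)$, applying Theorem~\ref{thm:boundary map} with stationary point $\varphi \in \mathrm{State}(A)$ supplies a $G$-equivariant measurable map $\beta : B \to \mathrm{State}(A)$ whose barycenter is $\varphi$. Setting $\cP(a)(b) := \beta_b(a)$ produces a map $\cP : A \to L^\infty(B,\nu_B)$ that inherits linearity, unitality, complete positivity, and $G$-equivariance from the corresponding properties of the states $\beta_b$ and from the equivariance of $\beta$, while $\nu_B \circ \cP = \varphi$ is exactly the barycenter condition. For injectivity, I would disintegrate any $\cP \in \mathrm{UCP}_G(A, L^\infty(B,\nu_B))$ back into a measurable field of states $b \mapsto \beta_b \in \mathrm{State}(A)$ reproducing $\cP$, so that two ucp maps with the same image under $\Phi$ yield the same $\beta$ by the essential uniqueness clause of Theorem~\ref{thm:boundary map}, and therefore coincide in $L^\infty(B,\nu_B)$.

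The only technical point I foresee is the disintegration step in the injectivity argument, which requires a measurable selection $b \mapsto \beta_b$ reproducing $\cP$ almost everywhere. This is routine when $A$ is separable (one picks measurable representatives of $\cP(a)$ for $a$ running over a countable dense $\mathbb{Q}[i]$-$*$-subalgebra of $A$ and extends by norm-continuity on a co-null set), and separability is implicit in the conventions of this paper; without it one would have to be more delicate about coinciding representatives on $\nu_B$-nullsets. The remainder of the argument is purely formal bookkeeping on top of Theorem~\ref{thm:boundary map}.
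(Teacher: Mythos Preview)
Your argument is correct, but it follows a genuinely different route from the paper's own proof. The paper does not invoke Theorem~\ref{thm:boundary map} at all; instead it transports the problem through the Poisson transform $P_{\nu_B}:L^\infty(B,\nu_B)\to\mathrm{Har}(G,\mu)$ and then writes down an explicit inverse: for $\varphi\in\mathcal{S}_\mu(A)$ one sets $\cP_\varphi(x)(g)=\varphi(g^{-1}x)$, checks that $\cP_\varphi$ lands in $\mathrm{Har}(G,\mu)$ precisely because $\varphi$ is $\mu$-stationary, and verifies by a two-line computation that $\varphi\mapsto\cP_\varphi$ inverts $\cP\mapsto\mu\circ\cP$. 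This is entirely elementary and, notably, never needs to disintegrate a ucp map into a field of states; in particular it does not require separability of $A$.

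Your approach, by contrast, packages both existence and uniqueness into the single black box of Theorem~\ref{thm:boundary map} applied to $\cC=\mathrm{State}(A)$, which is conceptually clean and makes the role of the Poisson boundary transparent. The cost is exactly the technical point you flag: turning a ucp map $A\to L^\infty(B,\nu_B)$ back into an honest measurable map $B\to\mathrm{State}(A)$ requires a measurable-selection argument that leans on separability of $A$ (and, for Theorem~\ref{thm:boundary map} itself, one typically wants $\mathrm{State}(A)$ metrizable). Under the paper's standing separability conventions this is harmless, so your proof goes through; but the paper's direct construction sidesteps the issue entirely and is in that sense slightly sharper.
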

\begin{proof}
Let $\mathrm{Har}(G,\mu)$ be the space of $\mu$-harmonic functions with the state associated with $\mu$. Then the Poisson transform $P_{\nu_B}:L^\infty(B,\nu_B)\to \mathrm{Har}(G,\mu)$ is a state preserving $G$-equivariant isomorphism between operator systems. Hence we only need to prove that
$$\Phi_0:\mathrm{UCP}_G(A,\mathrm{Har}(G,\mu))\to\mathcal{S}_\mu(A):\cP\mapsto\mu\circ\cP$$
is a bijection. For $\varphi\in\mathcal{S}_\mu(A)$, define a $G$-ucp map 
$$\cP_\varphi:A\to L^\infty(G,m_G):x\mapsto(g\mapsto \varphi(g^{-1}x)).$$
Since $\varphi$ is $\mu$-stationary, we have $\cP_\varphi(x)\in \mathrm{Har}(G,\mu)$ for any $x\in A$. Hence $\cP_\varphi\in\mathrm{UCP}_G(A,\mathrm{Har}(G,\mu))$. 

Fix $\varphi\in\mathcal{S}_\mu(A)$, then for any $x\in A$, we have
$$\mu\circ\cP_\varphi(x)=\int_G \varphi(g^{-1}x)\d\mu(g)=\mu\ast\varphi(x)=\varphi(x).$$
Hence $\varphi=\mu\circ\cP_\varphi$.

Fix $\cP\in\mathrm{UCP}_G(A,\mathrm{Har}(G,\mu))$, then for any $x\in A$ and $g\in G$,
$$\cP_{\mu\circ\cP}(x)(g)=\mu\circ\cP(g^{-1}x)=\int_G \cP(g^{-1}x)(h)\d\mu(h)=\int_G \cP(x)(gh)\d\mu(h)=\cP(x)(g).$$
The third equality holds because $\cP$ is $G$-equivariant. The last equality holds because $\cP(x)\in\mathrm{Har}(G,\mu)$.

Therefore, $\varphi\mapsto\cP_\varphi$ is exactly the inverse of $\Phi_0$, which finishes the proof.
\end{proof}

Recall that a nonsingular $G$-space $(Y,\nu_Y)$ is called \textbf{metrically ergodic} if for any separable metric space $(Z,d)$ with continuous isometric $G$-action, any $G$-equivariant measurable map $f:Y\to Z$ must be $\nu_Y$-a.e. constant. In particular, the $(G,\mu)$-Poisson boundary is always metrically ergodic \cite[Theorem 2.7]{BF14}. The following lemma also appears as part of the proof of \cite[ Proposition 4.17]{bader2022charmenability}.

\begin{lemma}
\label{lem:metric ergodic}
 Let $(\mathcal{N},\tau)$ be a trace-preserving $G$-von Neumann algebra. Let $(Y,\nu_Y)$ be a metrically ergodic $G$-space. Then, 
 $((\mathcal{N},\tau)\bar{\otimes}L^{\infty}(Y,\nu_Y))^G=\cN^G\otimes 1$.
\end{lemma}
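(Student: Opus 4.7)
The plan is to reduce the statement to the defining property of metric ergodicity, by viewing a $G$-invariant element of the tensor product as an equivariant Borel map from $Y$ to a Polish $G$-space on which $G$ acts by isometries, and then invoking the fact that such a map is essentially constant. The inclusion $\cN^G\otimes 1\subseteq(\cN\bar\otimes L^\infty(Y,\nu_Y))^G$ is immediate, so I focus on the reverse containment.

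Fix $X\in(\cN\bar\otimes L^\infty(Y,\nu_Y))^G$ and, after rescaling, assume $\|X\|\le 1$. Via the standard identification $\cN\bar\otimes L^\infty(Y,\nu_Y)\cong L^\infty_{w^*}(Y,\nu_Y;\cN)$ of the von Neumann tensor product with essentially bounded weak-$*$ measurable $\cN$-valued functions, I write $X$ as a disintegration $y\mapsto X_y\in\cN$ with $\|X_y\|\le 1$ for $\nu_Y$-a.e.\ $y$. Equip the closed unit ball $B=(\cN)_1$ with the trace metric $d(a,b)=\|a-b\|_2$. Since $\cN_*$ is separable and $\tau$ is a faithful normal trace, $(B,d)$ is a Polish space (the $d$-topology coincides with the $\sigma$-strong topology on $B$, and $B$ is complete because it is $\sigma$-weakly compact in $\cN$). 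Because $G$ acts on $\cN$ by $\tau$-preserving $*$-automorphisms, the induced action on $(B,d)$ is isometric; and on bounded subsets of $\cN$ the Borel structures induced by the $w^*$-topology and by $d$ coincide, so $f\colon Y\to(B,d)$, $y\mapsto X_y$, is Borel measurable.

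The $G$-invariance of $X$ translates to $f(gy)=g\cdot f(y)$ for $\nu_Y$-a.e.\ $y$ and each $g\in G$; using second countability of $G$, a standard Fubini-type argument produces a genuinely $G$-equivariant Borel representative. Metric ergodicity of $(Y,\nu_Y)$ then forces $f$ to be essentially constant, so $X_y=a$ for some fixed $a\in B$ and a.e.\ $y$, whence $X=a\otimes 1$; the equivariance relation at points of constancy gives $g\cdot a=a$, so $a\in\cN^G$. The main technical obstacle is the bookkeeping surrounding the identification $\cN\bar\otimes L^\infty(Y,\nu_Y)\cong L^\infty_{w^*}(Y,\nu_Y;\cN)$ together with verifying that $y\mapsto X_y$ is Borel into $(B,d)$ — both are consequences of separability of $\cN_*$ and normality of $\tau$, but they need to be stated cleanly. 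Should this route feel unwieldy, one can equivalently test $X$ against elements $1\otimes h$ via slice maps, producing, for each element of a countable $\sigma$-weakly dense subset of $\cN_*$, a bounded $G$-equivariant measurable function on $Y$ valued in a separable metric space, and conclude constancy coordinatewise.
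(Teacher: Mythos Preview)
Your proof is correct and follows essentially the same approach as the paper: identify $\cN\bar\otimes L^\infty(Y)$ with $L^\infty(Y,\cN)$, view a $G$-invariant element as a $G$-equivariant measurable map into $\cN\subset L^2(\cN,\tau)$ endowed with the $\|\cdot\|_2$-metric (on which the trace-preserving $G$-action is isometric), and apply metric ergodicity to conclude constancy with value in $\cN^G$. The only cosmetic difference is that you work with the unit ball $(\cN)_1$ as the target Polish space, while the paper simply uses $L^2(\cN,\tau)$ itself.
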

\begin{proof}
By viewing $\mathcal{N}\bar{\otimes} L^\infty(Y)$ as $L^\infty(Y,\mathcal{N})$, a $G$-invariant element of $\mathcal{N}\bar{\otimes} L^\infty(Y)$ is equivalent to a $G$-equivariant (essentially) bounded measurable map $f:(Y,\nu_Y)\to\mathcal{N}$. View $\mathcal{N}$ as a subspace of $(L^2(\mathcal{N}),\Vert\cdot\Vert_{2,\tau})$, which is a separable metric space with continuous isometric $G$-action. Since $(Y,\nu_Y)$ is metrically ergodic, such a $G$-equivariant map $f:(Y,\nu_Y)\to(L^2(\mathcal{N}),\Vert\cdot\Vert_{2,\tau})$ must be $\nu_Y$-a.e. constant and takes a single value in $\cN^G$, which finishes our proof. 
\end{proof}

\subsection{Radon-Nikodym factors and equivariant maps}\label{Preliminaries:RN factor}
The notion of Radon-Nikodym factors was firstly introduced for discrete groups in \cite{KV83} and then extended to locally compact groups in \cite{NZ00}. Let $G$ be a locally compact second countable group and $(X,\Sigma_X,\nu_X)$ be a nonsingular $G$-space, where $\Sigma_X$ is the the $\sigma$-algebra of $(X,\nu_X)$ and $\overline{\Sigma}_X$ is the completion with respect to $\nu_X$. Following \cite[Definition 1.12]{NZ00}, the Radon-Nikodym factor $(X_\RN,\Sigma_{X_\RN},\nu_{X_\RN})$ is the unique $G$-factor of $(X,\Sigma_X,\nu_X)$ such that $\overline{\Sigma}_{X_\RN}\subset\overline{\Sigma}_X$ is the minimal $\sigma$-subalgebra that keeps the function family $\{\frac{\d g\nu_X}{\d\nu_X}\}_{g\in G}$ measurable, or equivalently, $L^\infty(X_\RN)\subset L^\infty(X)$ is the von Neumann subalgebra generated by $\{(\frac{\d g\nu_X}{\d\nu_X})^{it}\}_{g\in G,t\in\R}$. We denote the factor map by $\pi_{X_\RN}:(X,\nu_X)\to(X_\RN,\nu_{X_\RN})$. Following \cite{NZ00}, this factor map is with relatively $G$-invariant measure, i.e., the canonical trace preserving conditional expectation $E:L^\infty(X,\nu_X)\to L^\infty(X_\RN,\nu_{X_\RN})$ is $G$-equivariant.

Recently, \cite{Zh23} extended the notion of Radon-Nikodym factors to the setting of W$^*$-inclusions. Let $(M,\tau)$ be a separable tracial von Neumann algebra. By a W$^*$-inclusion $(M,\tau)\subset(\CA,\varphi_\CA)$ or a W$^*$-extension $(\CA,\varphi_\CA)$ of $(M,\tau)$, we mean that $M\subset\CA$ is a W$^*$-inclusion and $\varphi_\CA$ is a normal faithful state on $\CA$ with $\varphi|_\CA=\tau$. Let $\{\sigma_t^\CA\}_{t\in\R}$ be the automorphism group of $(\CA,\varphi_\CA)$. Then following \cite[Definition 3.4]{Zh23}, the \textbf{noncommutative Radon-Nikodym factor} $(\CA,\varphi_\CA)_\RN=(\CA_\RN,\varphi_{\CA_\RN})$ of $(\CA,\varphi_\CA)$ is the von Neumann subalgebra $\CA_\RN\subset\CA$ generated by $\{\sigma_t^\CA(z)\mid z\in M,t\in \R\}$ with the state $\varphi_{\CA_\RN}=\varphi_\CA|_{\CA_\RN}$. Since $(\CA_\RN,\varphi_{\CA_\RN})\subset(\CA,  \varphi_\CA)$ is a von Neumann subalgebra that is invariant under the action of automorphism group, by \cite[Theorem IX.4.2]{TakesakiBookII}, there always exists a normal faithful state preserving conditional expectation $E:(\CA,\varphi_\CA)\to(\CA_\RN,\varphi_{\CA_\RN})$.

Assume that $G$ is discrete. It is shown in \cite[Example 3.5]{Zh23} that the noncommutative Radon-Nikodym factor of $L^\infty(X,\nu_X)\rtimes G$ with respect to $L(G)$ is exactly $L^\infty(X_\mathrm{RN},\nu_{X_\mathrm{RN}})\rtimes G$.

We now fix a locally compact second countable group $G$ and an admissible probability measure $\mu\in\Prob(G)$. Let us define the Radon-Nikodym factor in the setting of tracial W$^*$-dynamics. 
\begin{definition}
\label{def:rnfactor}
Let $(\cM,\tau)$ be a (not necessarily trace preserving) $G$-tracial von Neumann algebra. Following \cite[Corollary 3.6]{TakesakiBookII}, for $g\in G$, let 
$$h_g:=(D(\tau\circ g):D\tau)\in L^1(\cM,\tau).$$
We define the \textbf{$G$-Radon-Nikodym factor} of $(\cM,\tau)$ to be the von Neumann subalgebra $\cM_\RN:=\langle h_g^{it}\rangle_{g\in G, t\in\R}\subset \cM$. Here $\langle\,\cdot \,\rangle$ refers to the von Neumann algebra generated by the given set.
\end{definition}

The following Propositions \ref{prop:Mrn=Zrn} and \ref{prop:Arn=Mrn} show the relationship between the newly defined Radon–Nikodym factor for tracial W$^*$-dynamics and the known ones for $G$-spaces and W$^*$-inclusions. 
\begin{proposition}\label{prop:Mrn=Zrn}
With the same conditions above, assume that $(Z(\cM),\tau|_{Z(\cM)})=L^\infty(Z,\nu_Z)$ for a nonsingular $G$-space $(Z,\nu_Z)$. Then $\cM_\RN=L^\infty(Z_\RN,\nu_{Z_\RN})\subset L^\infty(Z,\nu_Z)$. In particular, $\cM_\RN$ is a $G$-invariant subalgebra of $\cM$.
\end{proposition}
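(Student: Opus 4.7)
My plan is to exploit two facts in tandem: the trace property of $\tau$ (which forces all the Connes cocycles $h_g^{it}$ into the center), together with the classical description of the commutative Radon–Nikodym factor on $L^\infty(Z,\nu_Z)$ via the derivatives $\frac{dg\nu_Z}{d\nu_Z}$.

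First I would verify that for every $g \in G$, the state $\tau \circ g$ is again a faithful normal tracial state on $\cM$. Faithfulness and normality are automatic from $g$ being a $*$-automorphism, while traciality follows from $(\tau \circ g)(xy) = \tau(g(x)g(y)) = \tau(g(y)g(x)) = (\tau \circ g)(yx)$. Hence there is a unique faithful positive $h_g \in L^1(\cM,\tau)$ with $\tau \circ g = \tau(h_g \,\cdot\,)$. Writing $\tau(h_g xy) = \tau(h_g yx)$ and combining with cyclicity and faithfulness of $\tau$ gives $\tau((h_g y - y h_g)x) = 0$ for all $x \in \cM$, so $h_g$ is affiliated with $Z(\cM)$. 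In particular the unitaries $h_g^{it}$ obtained by spectral calculus all lie in $Z(\cM) = L^\infty(Z,\nu_Z)$, which yields the inclusion
\[
\cM_\RN \;=\; \langle h_g^{it} : g \in G,\, t \in \R \rangle \;\subset\; L^\infty(Z,\nu_Z).
\]

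Next I would identify $h_g$ explicitly on the space side. Because $G$ acts on $\cM$ by automorphisms it acts on $Z(\cM) = L^\infty(Z,\nu_Z)$, and this action is implemented by the given nonsingular $G$-action on $(Z,\nu_Z)$. A short change-of-variables computation then identifies $h_g$ with the Radon–Nikodym derivative $\tfrac{d\,g^{-1}_{*}\nu_Z}{d\nu_Z}$ (so that $h_{g^{-1}}$ becomes $\tfrac{d\,g_{*}\nu_Z}{d\nu_Z}$). As $g$ ranges over $G$, the family $\{h_g^{it}\}_{g,t}$ therefore coincides, up to the involution $g \mapsto g^{-1}$, with the generating family $\{(\tfrac{d\,g_{*}\nu_Z}{d\nu_Z})^{it}\}_{g,t}$ of $L^\infty(Z_\RN,\nu_{Z_\RN})$ recalled from \cite{NZ00}. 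This delivers the reverse inclusion and the equality $\cM_\RN = L^\infty(Z_\RN,\nu_{Z_\RN})$.

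The $G$-invariance of $\cM_\RN$ inside $\cM$ is then immediate: under the identification above, $\cM_\RN$ sits inside $Z(\cM) = L^\infty(Z,\nu_Z)$ as $L^\infty(Z_\RN,\nu_{Z_\RN})$, and $Z_\RN$ is by construction a $G$-factor of $Z$, so this subalgebra is manifestly $G$-stable. I do not anticipate any serious technical obstacle. The only point requiring care is the bookkeeping of pushforward/pullback conventions when translating the noncommutative cocycle identity $\tau \circ g = \tau(h_g\,\cdot\,)$ into its commutative incarnation on $(Z,\nu_Z)$; once fixed, the rest is routine.
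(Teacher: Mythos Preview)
Your proposal is correct and follows essentially the same route as the paper: use that $\tau\circ g$ is again a trace to force $h_g$ into $\cM'\cap L^1(\cM,\tau)=L^1(Z,\nu_Z)$, identify $h_g$ with the classical Radon--Nikodym derivative $\frac{\d g^{-1}\nu_Z}{\d\nu_Z}$, and conclude $\cM_\RN=L^\infty(Z_\RN,\nu_{Z_\RN})$. The paper's version is just a terser rendition of the same argument, with $G$-invariance likewise inherited from the fact that $Z_\RN$ is a $G$-factor of $Z$.
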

\begin{proof}
For any $g\in G$, since $\tau\circ g=\tau(h_g\,\cdot\,)$ is still a trace on $\cM$, we have $h_g\in \cM'\cap L^1(\cM,\tau)=L^1(Z,\nu_Z)$. Since $\tau|_{L^\infty(Z)}=\nu_Z$, we also have $\nu_Z\circ g=\nu_Z(h_g\,\cdot\,)$ on $L^\infty(Z,\nu_Z)$. Hence $h_g=\frac{\d g^{-1}\nu_Z}{\d\nu_Z}\in L^1(Z,\nu_Z)$ for any $g\in G$. Therefore, $\cM_\RN=\left\langle \left(\frac{\d g^{-1}\nu_Z}{\d\nu_Z}\right)^{it}\right\rangle_{g\in G,t\in \R}=L^\infty(Z_\RN,\nu_{Z_\RN})$.
\end{proof}

\begin{proposition}\label{prop:Arn=Mrn}
With the same conditions above, further assume that $G$ is discrete. Then as a W$^*$-extension of $L(G)$, the Radon-Nikodym factor of $(\cM,\tau)\rtimes G$ is exactly $(\cM_\RN,\tau)\rtimes G$.
\end{proposition}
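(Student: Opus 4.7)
The plan is to compute $\cA_\RN$ for $\cA := (\cM,\tau)\rtimes G$ (equipped with the dual state $\varphi_\cA$) by explicitly calculating the modular automorphism group $\sigma_t := \sigma_t^{\varphi_\cA}$. By the definition of the noncommutative Radon--Nikodym factor for the W$^*$-inclusion $L(G)\subset\cA$, we have $\cA_\RN = \langle \sigma_t(z) : z\in L(G),\, t\in\R\rangle$; since $L(G)$ is generated by the unitaries $\{\lambda_g\}_{g\in G}$, the task reduces to identifying $\sigma_t(\lambda_g)$ for each $g\in G$ and $t\in\R$.

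First I would carry out the modular group computation. Since $\varphi_\cA$ equals $\tau$ composed with the canonical conditional expectation onto $\cM$, and since $\tau$ is tracial, the subalgebra $\cM$ lies in the centralizer of $\varphi_\cA$, so $\sigma_t|_\cM = \id$. For the unitaries $\lambda_g$, Takesaki's classical formula for the modular group of a dual weight on a discrete crossed product (see \cite[Chapter X]{TakesakiBookII}) yields
\[
\sigma_t(\lambda_g) = \alpha_g(h_g)^{it}\,\lambda_g.
\]
One may also verify this directly from the KMS condition by computing both sides of $\varphi_\cA(\lambda_g x) = \varphi_\cA(x\,\sigma_{-i}(\lambda_g))$ for $x\in\cM$ and using the cocycle identity $h_{gh} = h_h\,\alpha_{h^{-1}}(h_g)$, which follows from the defining property of $h_g$ and the traciality of $\tau$. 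Specializing this cocycle identity to $h = g^{-1}$ gives $\alpha_g(h_g) = h_{g^{-1}}^{-1}$, so $\alpha_g(h_g)^{it}\in\cM_\RN$ for every $g$ and $t$.

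Combining these computations, $\cA_\RN = \langle \alpha_g(h_g)^{it}\lambda_g : g\in G,\, t\in\R\rangle$. Setting $t = 0$ yields $\lambda_g\in\cA_\RN$ for every $g$, and then multiplying by $\lambda_g^*$ yields $\alpha_g(h_g)^{it}\in\cA_\RN$, whence $h_g^{it}\in\cA_\RN$ for all $g$ and $t$ (using the above form of $\alpha_g(h_g)$). Therefore $\cM_\RN\subseteq\cA_\RN$; combined with $\{\lambda_g\}_g\subseteq\cA_\RN$ and the $G$-invariance of $\cM_\RN$ established in Theorem~\ref{thm:keyingredient}, this shows $\cM_\RN\rtimes G\subseteq\cA_\RN$. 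The reverse inclusion is immediate, since each generator $\alpha_g(h_g)^{it}\lambda_g$ belongs to $\cM_\RN\cdot L(G)\subseteq\cM_\RN\rtimes G$. The hard step is the explicit modular group computation; once that is in hand, the remainder is bookkeeping with the cocycle identity.
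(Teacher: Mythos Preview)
Your proof is correct and follows essentially the same route as the paper's: both compute the modular automorphism group of the crossed product via Takesaki's formula (the paper cites \cite[Theorem X.1.17]{TakesakiBookII} and writes $\sigma_t^\cA(\lambda_g)=\lambda_g\, h_g^{it}$, which is your $\alpha_g(h_g)^{it}\lambda_g$ after conjugating by $\lambda_g$) and then read off the generators of $\cA_\RN$. One caution on the logical order: you invoke Theorem~\ref{thm:keyingredient} for the $G$-invariance of $\cM_\RN$, but in the paper that theorem is assembled \emph{after} the present proposition (through Proposition~\ref{prop:conditional expectation onto RN factor}, which uses Proposition~\ref{prop:Arn=Mrn}); the $G$-invariance you actually need is already available from Proposition~\ref{prop:Mrn=Zrn}, so cite that instead to avoid circularity.
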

\begin{proof}
The proof is basically the same as \cite[Example 3.5]{Zh23}. Let $(M,\tau_M)=L(G)$ and $(\CA,\varphi_\CA)=(\cM,\tau)\rtimes G$. Then by \cite[Theorem X.1.17]{TakesakiBookII}, the automorphism group $(\sigma_t^\CA)_{t\in\R}$ of $(\CA,\varphi_\CA)$ satisfies that for $\lambda_g\in L(G)$ $(g\in G)$,
$$\sigma_t^\CA(\lambda_g)=\lambda_g\cdot (D(\tau\circ g):D\tau)^{it}=\lambda_g\cdot h_g^{it}.$$
Therefore, $\CA_\RN=\langle \sigma^\CA_t(\lambda_g)\rangle_{g\in G,t\in\R}=\langle \lambda_g,h_g^{it}\rangle_{g\in G,t\in\R}=\cM_\RN\rtimes G$.
\end{proof}

\begin{proposition}\label{prop:conditional expectation onto RN factor}
With the same conditions above, the canonical $\tau$-preserving conditional expectation $\mathbb{E}_\RN:(\cM,\tau)\to(\cM_\RN,\tau)$ is $G$-equivariant.
\end{proposition}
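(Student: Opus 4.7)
The plan is to factor $\mathbb{E}_\RN$ through the canonical centre-valued trace, which is automatically $G$-equivariant, and then reduce the remaining piece to the classical Radon--Nikodym factor of a nonsingular $G$-space. The starting observation is that $\cM_\RN$ actually lies inside $Z(\cM)$: since each $\alpha_g$ is an automorphism, the faithful normal state $\tau\circ\alpha_g$ is again a trace on $\cM$, so its density $h_g=(D(\tau\circ\alpha_g):D\tau)$ is affiliated with $Z(\cM)$ and all the unitaries $h_g^{it}$ lie in $Z(\cM)$. Writing $(Z(\cM),\tau|_{Z(\cM)})=L^\infty(Z,\nu_Z)$ for a standard nonsingular $G$-space, Proposition~\ref{prop:Mrn=Zrn} then identifies $\cM_\RN$ with $L^\infty(Z_\RN,\nu_{Z_\RN})\subseteq L^\infty(Z,\nu_Z)$.

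Next, I would invoke the canonical centre-valued trace $T:\cM\to Z(\cM)$, which is the unique normal conditional expectation satisfying $T(xy)=T(yx)$ for all $x,y\in\cM$, and which automatically preserves $\tau$ (since $\tau = (\tau|_{Z(\cM)})\circ T$ for any trace $\tau$). Because every $\alpha_g$ preserves $Z(\cM)$, the conjugate $\alpha_g\circ T\circ\alpha_g^{-1}$ is another normal conditional expectation onto $Z(\cM)$ with the same tracial property, so uniqueness forces $T=\alpha_g\circ T\circ\alpha_g^{-1}$; thus $T$ is $G$-equivariant. Let $E_0:L^\infty(Z,\nu_Z)\to L^\infty(Z_\RN,\nu_{Z_\RN})$ denote the trace-preserving conditional expectation onto the classical Radon--Nikodym factor. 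By \cite{NZ00}, recalled in Subsection~\ref{Preliminaries:RN factor}, $E_0$ is $G$-equivariant for the induced nonsingular $G$-action on $(Z,\nu_Z)$.

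Finally, the composition $E_0\circ T:\cM\to\cM_\RN$ is a $\tau$-preserving, $\cM_\RN$-bimodular, normal, unital completely positive map onto $\cM_\RN$, hence a $\tau$-preserving conditional expectation. Uniqueness of such expectations in the tracial setting gives $\mathbb{E}_\RN=E_0\circ T$, and since both factors are $G$-equivariant, so is $\mathbb{E}_\RN$. The only point that requires a modicum of care is the identification $\mathbb{E}_\RN=E_0\circ T$, but this follows immediately from the uniqueness of $\tau$-preserving conditional expectations onto a fixed von Neumann subalgebra in the tracial setting; no genuine obstacle arises.
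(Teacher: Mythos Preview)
Your argument is correct and takes a genuinely different, more direct route than the paper's. The paper passes to a countable dense subgroup $G_0\leq G$, forms the crossed product $(\cM,\tau)\rtimes G_0$, and invokes the noncommutative Radon--Nikodym factor theory of \cite{Zh23} (via Proposition~\ref{prop:Arn=Mrn}) together with the existence of a state-preserving conditional expectation onto $(\cM_\RN,\tau)\rtimes G_0$; equivariance on $\cM$ is then extracted by composing with the canonical expectation onto $\cM_\RN$ and applying uniqueness. Your approach instead exploits the fact (already contained in Proposition~\ref{prop:Mrn=Zrn}) that $\cM_\RN\subset Z(\cM)$, and factors $\mathbb{E}_\RN$ as $E_0\circ T$ where $T$ is the centre-valued trace and $E_0$ is the classical conditional expectation onto the Radon--Nikodym factor of the centre. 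Equivariance of $T$ follows from its intrinsic uniqueness (no choice of state involved), and equivariance of $E_0$ is the classical result of \cite{NZ00} already quoted in Subsection~\ref{Preliminaries:RN factor}. This avoids crossed products, the discrete-subgroup reduction, and the machinery of \cite{Zh23} entirely, at the cost of invoking the centre-valued trace on finite von Neumann algebras. The paper's route, on the other hand, illustrates the compatibility of the three notions of Radon--Nikodym factor (measurable, W$^*$-inclusion, and $G$-tracial) and makes Proposition~\ref{prop:Arn=Mrn} do real work.
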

\begin{proof}
Fix a countable dense subgroup $G_0<G$ and view it as a discrete group. By \cite[Proposition 1.14]{NZ00} and Proposition \ref{prop:Mrn=Zrn}, we have $\cM_\RN=\langle h_g^{it}\rangle_{g\in G, t\in\R}=\langle h_{g_0}^{it}\rangle_{g_0\in G_0, t\in\R}$. Hence $\cM_\RN$ is also the $G_0$-Radon-Nikodym factor of $\cM$. By Proposition \ref{prop:Arn=Mrn}, for the W$^*$-inclusion $L(G_0)\subset(\cM,\tau)\rtimes G_0$, the Radon-Nikodym factor of $(\cM,\tau)\rtimes G_0$ is exactly $(\cM_\RN,\tau)\rtimes G_0$. Hence there exists a state preserving conditional expectation $\mathbb{E}:(\cM,\tau)\rtimes G_0\to(\cM_\RN,\tau)\rtimes G_0$. Since $\mathbb{E}|_{L(G_0)}=\id_{L(G_0)}$, $\mathbb{E}$ is $G_0$-equivariant. Let $\mathbb{E}_0:(\cM_\RN,\tau)\rtimes G_0\to(\cM_\RN,\tau)$ be the canonical conditional expectation onto $\cM_\RN$. Then $\mathbb{E}_0$ is also state preserving and $G_0$-equivariant. Consider $\mathbb{E}_0\circ \mathbb{E}|_\cM:(\cM,\tau)\to(\cM_\RN,\tau)$, which is $\tau$-preserving and $G_0$-equivariant. By the uniqueness of $\tau$-preserving conditional expectation onto $\cM_\RN$ \cite[Theorem IX.4.2]{TakesakiBookII}, we must have $\mathbb{E}_\RN=\mathbb{E}_0\circ E|_\cM$. Therefore, $\mathbb{E}_\RN$ is $G_0$-equivariant. Moreover, by the arbitrariness of $G_0$, $\mathbb{E}_\RN$ is $G$-equivariant.
\end{proof}
Combining Proposition~\ref{prop:Mrn=Zrn} along with Proposition~\ref{prop:conditional expectation onto RN factor} we obtain the following corollary as an immediate consequence. This is Theorem~\ref{thm:keyingredient} from the introduction. 
\begin{cor}
Let $G$ denote a locally compact second countable group,  and $\mu\in\Prob(G)$, an admissible probability measure. Given a $G$-von Neumann algebra $(\cM,\tau)$ with a not-necessarily $G$-invariant trace, the $G$-Radon-Nikodym factor $\cM_\RN$ is a $G$-invariant von Neumann subalgebra of $\cM$.  Moreover, the canonical $\tau$-preserving conditional expectation $$\mathbb{E}_\RN:(\cM,\tau)\to(\cM_\RN,\tau)$$ is $G$-equivariant.     
\end{cor}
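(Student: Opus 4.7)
The plan is to derive this corollary as a direct consequence of Propositions~\ref{prop:Mrn=Zrn} and~\ref{prop:conditional expectation onto RN factor}, which together cover the two assertions under exactly the hypotheses stated.

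For the $G$-invariance of $\cM_\RN$, the first step is to reduce to the center $Z(\cM)$ and then invoke Proposition~\ref{prop:Mrn=Zrn}. Since $\tau$ is a faithful normal trace on $\cM$, its restriction to the abelian von Neumann algebra $Z(\cM)$ is a faithful normal state, so one may identify $(Z(\cM),\tau|_{Z(\cM)})\cong L^\infty(Z,\nu_Z)$ for a standard probability space $(Z,\nu_Z)$. The $G$-action on $\cM$ restricts to $Z(\cM)$, and the induced action on $(Z,\nu_Z)$ is nonsingular: for each $g\in G$ the Radon--Nikodym derivative $h_g=(D(\tau\circ g):D\tau)$ intertwines two traces on $\cM$, hence lies in $Z(\cM)\cap L^1(\cM,\tau)\cong L^1(Z,\nu_Z)$, and this is precisely the Radon--Nikodym cocycle of the induced action on $(Z,\nu_Z)$. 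Proposition~\ref{prop:Mrn=Zrn} then identifies $\cM_\RN$ with $L^\infty(Z_\RN,\nu_{Z_\RN})\subset L^\infty(Z,\nu_Z)$, whose $G$-invariance is part of that conclusion.

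For the second assertion, Proposition~\ref{prop:conditional expectation onto RN factor} already establishes the $G$-equivariance of the canonical $\tau$-preserving conditional expectation $\mathbb{E}_\RN:(\cM,\tau)\to(\cM_\RN,\tau)$ under exactly the hypotheses of the corollary, so no further argument is required.

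The only mild check along the way is the nonsingularity of the induced action on $(Z,\nu_Z)$, which is automatic from the existence and strict positivity of the central Radon--Nikodym derivatives $h_g$; consequently the corollary is essentially a bookkeeping combination of the two preceding propositions rather than a genuinely new argument.
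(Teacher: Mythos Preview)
Your proposal is correct and follows exactly the paper's approach: the corollary is stated as an immediate consequence of Propositions~\ref{prop:Mrn=Zrn} and~\ref{prop:conditional expectation onto RN factor}, with the former giving $G$-invariance of $\cM_\RN$ and the latter giving $G$-equivariance of $\mathbb{E}_\RN$. The additional checks you spell out (identification of the center and nonsingularity of the induced action) are already contained in the proof of Proposition~\ref{prop:Mrn=Zrn}, so they are harmless but not needed here.
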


The following lemma is a generalization of \cite[Proposition 2.6]{BS06}.
\begin{lemma}\label{lem:h G-inv}
Let $\cM$ be a $G$-von Neumann algebra. Assume that $\tau_1$ and $\tau_2$ are both normal faithful $\mu$-stationary traces on $\cM$. Let $h=(D\tau_2:D\tau_1)\in L^1(\cM,\tau_1)$. Then $h$ must be $G$-invariant.
\end{lemma}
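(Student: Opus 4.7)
The strategy is to adapt the classical argument (underlying Bader--Shalom's Proposition 2.6) that two equivalent $\mu$-stationary probability measures have $G$-invariant Radon--Nikodym derivative. Crucially, I would exploit the $\mu$-stationarity of \emph{both} $\tau_1$ and $\tau_2$ through a double Cauchy--Schwarz identity, rather than requiring one of them to be actually $G$-invariant. Since $\tau_1$ and $\tau_2$ are both traces, $h$ is positive and affiliated with the center $Z(\cM)$.

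To sidestep the possible unboundedness of $h$, I would first pass to the auxiliary reference $\tau_0 := \tfrac{1}{2}(\tau_1 + \tau_2)$, which is again a normal faithful $\mu$-stationary trace. The associated bounded positive central elements $h_i := (D\tau_i : D\tau_0) \in Z(\cM)$ satisfy $0 \leq h_i \leq 2$ and $h_1 + h_2 = 2 \cdot 1$, and the chain rule for Connes cocycles gives $h = h_2 h_1^{-1}$, with $h_1^{-1}$ affiliated to $Z(\cM)$ since $\tau_1$ is faithful. Hence it suffices to prove the $G$-invariance of each $h_i$.

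The core of the argument is then the following double Cauchy--Schwarz identity. Fix $i\in\{1,2\}$ and apply the $\mu$-stationarity of $\tau_i$ to the bounded vector $h_i$ itself:
\[
\|h_i\|_{L^2(\tau_0)}^{2} \;=\; \tau_i(h_i) \;=\; \int_G \tau_i(g^{-1}\cdot h_i)\,\d\mu(g) \;=\; \int_G \bigl\langle h_i,\, g^{-1}\cdot h_i\bigr\rangle_{\tau_0}\,\d\mu(g).
\]
Cauchy--Schwarz on $(G,\mu)$ followed by Cauchy--Schwarz in $L^{2}(\cM,\tau_0)$ yields
\[
\|h_i\|_{\tau_0}^{4} \;\le\; \int_G \bigl\langle h_i,\, g^{-1}\cdot h_i\bigr\rangle_{\tau_0}^{2}\,\d\mu(g) \;\le\; \|h_i\|_{\tau_0}^{2}\int_G \|g^{-1}\cdot h_i\|_{\tau_0}^{2}\,\d\mu(g),
\]
and the $\mu$-stationarity of $\tau_0$ applied to the positive element $h_i^2$ evaluates the last integral as $\|h_i\|_{\tau_0}^{2}$. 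The chain therefore consists of equalities, and the equality cases force $g^{-1}\cdot h_i$ to be proportional to $h_i$ with proportionality constant $\mu$-a.e.\ equal to $1$; that is, $g\cdot h_i = h_i$ for $\mu$-a.e.\ $g \in G$.

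Finally, to upgrade $\mu$-a.e.\ invariance to full $G$-invariance, I would invoke admissibility: the stabilizer $H := \{g : g\cdot h_i = h_i\}$ is a closed subgroup of $G$ by continuity of the action on $L^{2}(\cM,\tau_0)$; since $\mu\ll m_G$, $H$ meets $\mathrm{supp}\,\mu$ in a set of full Haar measure, whence $\mathrm{supp}\,\mu\subseteq H$ by closedness, and $\bigcup_n (\mathrm{supp}\,\mu)^n = G$ forces $H = G$. Applying this to both $h_1$ and $h_2$ yields the $G$-invariance of $h = h_2 h_1^{-1}$. The main obstacle—and the point of the argument—is that both Cauchy--Schwarz steps must be saturated simultaneously, which is exactly what requires the stationarity of \emph{both} traces and makes the conclusion go through without assuming $\tau_1$ is $G$-invariant.
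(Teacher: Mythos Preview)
Your argument is correct. The paper's own proof is much shorter: it observes (as you do) that $h$ is central, identifies $Z(\cM)=L^\infty(Z)$ with $\tau_i|_{Z(\cM)}=\nu_i$, recognizes $h=\frac{\d\nu_2}{\d\nu_1}$, and then simply cites \cite[Proposition~2.6]{BS06} for the $G$-invariance of the Radon--Nikodym derivative between two $\mu$-stationary probability measures.

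Your proof follows the same route---reduction to the commutative/central situation---but is self-contained: rather than citing Bader--Shalom, you reprove their Proposition~2.6 via the double Cauchy--Schwarz saturation argument. The passage to the auxiliary trace $\tau_0=\tfrac12(\tau_1+\tau_2)$ is a clean way to make the derivatives $h_i$ bounded and to keep all computations inside $L^2(\cM,\tau_0)$, so that both stationarity hypotheses can be invoked symmetrically. The payoff is a proof that does not leave the von~Neumann algebraic setting and does not outsource the key analytic step; the cost is a few more lines than the paper's one-sentence citation.
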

\begin{proof}
Assume that $Z(M)=L^\infty(Z)$ and $\tau_i|_{L^\infty(Z)}=\nu_i\in\Prob(Z)$ $(i=1,2)$. Since $\tau_2=\tau_1(h \,\cdot\,)$ is still a trace, we have $h\in \cM'\cap L^1(\cM,\tau_1)=L^1(Z,\nu_1)$. Hence $h=\frac{\d\nu_2}{\d \nu_1}$. Since $\tau_1$ and $\tau_2$ are $\mu$-stationary, so are $\nu_1$ and $\nu_2$. Then following \cite[Proposition 2.6]{BS06}, we know that $h$ is $G$-invariant.
\end{proof}

Given an inclusion $(\mathcal{N},\tau|_{\cN})\subset(\mathcal{M},\tau)$, we denote by $\mathbb{E}_{\cN}^{\cM}$ the canonical $\tau$-preserving conditional expectation onto $\cN$ from $\cM$. The following generalizes \cite[Lemma~2.2]{BS06}.
\begin{proposition}
\label{prop:commutingequivariance}
Let $(\mathcal{M},\tau)$ be a $G$-von Neumann algebra, where $\tau$ is a (not necessarily $G$-invariant) normal faithful state on $\cM$. Assume that we have the following inclusion of $G$-von Neumann algebras
\[(\mathcal{P},\tau|_{\mathcal{P}})\subset(\mathcal{N},\tau|_{\cN})\subset(\mathcal{M},\tau)\]
with the property that there exists a $\tau$-preserving conditional expectation onto each of these subalgebras. Then, $\mathbb{E}_{\mathcal{P}}^{\cM}$ is $G$-equivariant if and only if $\mathbb{E}_{\mathcal{N}}^{\cM}$ and $\mathbb{E}_{\mathcal{P}}^{\cN}$ are $G$-equivariant. 
\begin{proof} By the uniqueness of $\tau$-preserving conditional expectations \cite[Theorem IX.4.2]{TakesakiBookII}, we see that $\mathbb{E}_{\mathcal{P}}^{\cM}=\mathbb{E}_{\mathcal{P}}^{\cN}\circ\mathbb{E}_{\cN}^{\cM}$. Consequently, it follows that if $\mathbb{E}_{\mathcal{P}}^{\cN}$ and $\mathbb{E}_{\cN}^{\cM}$ are $G$-equivariant, then $\mathbb{E}_{\mathcal{P}}^{\cM}$ is $G$-equivariant. Now, assume that $\mathbb{E}_{\mathcal{P}}^{\cM}$ is $G$-equivariant. This implies that $\mathbb{E}_{\mathcal{P}}^\cN=\mathbb{E}_{\mathcal{P}}^\cM|_{\cN}$ is $G$-equivariant. Therefore, we see that
\[
\mathbb{E}_{\mathcal{P}}^{\cM}= g \cdot \mathbb{E}_{\mathcal{P}}^{\cM} \cdot g^{-1}= g \mathbb{E}_{\mathcal{P}}^\cN g^{-1}\circ g \mathbb{E}_{\cN}^{\cM} g^{-1}=\mathbb{E}_{\mathcal{P}}^\cN\circ
g \mathbb{E}_{\cN}^{\cM} g^{-1}.\]
Applying $\tau$ on both sides, we get that
\[\tau=\tau\circ \mathbb{E}_{\mathcal{P}}^{\cM}=\tau \circ \mathbb{E}_{\mathcal{P}}^\cN\circ
g \mathbb{E}_{\cN}^{\cM} g^{-1}=\tau\circ
g \mathbb{E}_{\cN}^{\cM} g^{-1}.
\]
Therefore, $g \mathbb{E}_{\cN}^{\cM} g^{-1}$ is a $\tau$-preserving conditional expectation from $\cM$ onto $\cN$. By the uniqueness of $\tau$-preserving conditional expectations, we get that
\[g \mathbb{E}_{\cN}^{\cM} g^{-1}=\mathbb{E}_{\cN}^{\cM}.
\]
The claim follows.
    \end{proof}    
\end{proposition}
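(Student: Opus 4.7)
The plan is to leverage the uniqueness of $\tau$-preserving conditional expectations onto a given von Neumann subalgebra (Takesaki IX.4.2, already invoked several times in the paper) together with the tower identity $\mathbb{E}_{\mathcal{P}}^{\cM}=\mathbb{E}_{\mathcal{P}}^{\cN}\circ\mathbb{E}_{\cN}^{\cM}$. This identity itself follows from uniqueness, since the composition on the right is a $\tau$-preserving conditional expectation from $\cM$ onto $\cP$.

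The forward direction is essentially formal: if both $\mathbb{E}_{\cN}^{\cM}$ and $\mathbb{E}_{\mathcal{P}}^{\cN}$ commute with every $g\in G$, then so does their composition, which equals $\mathbb{E}_{\mathcal{P}}^{\cM}$ by the tower identity. No further input is needed.

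For the reverse direction, assume $\mathbb{E}_{\mathcal{P}}^{\cM}$ is $G$-equivariant. First I would observe that $\mathbb{E}_{\mathcal{P}}^{\cN}$ is nothing but the restriction $\mathbb{E}_{\mathcal{P}}^{\cM}|_{\cN}$: indeed, this restriction is a $\tau|_{\cN}$-preserving conditional expectation of $\cN$ onto $\cP$, so by uniqueness it must agree with $\mathbb{E}_{\mathcal{P}}^{\cN}$. Consequently, $G$-equivariance of $\mathbb{E}_{\mathcal{P}}^{\cM}$ passes to $\mathbb{E}_{\mathcal{P}}^{\cN}$ for free.

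The remaining point, namely the $G$-equivariance of $\mathbb{E}_{\cN}^{\cM}$, is the one that requires a genuine argument and is the main obstacle. The strategy I propose is to show that for each $g\in G$ the conjugate $g\,\mathbb{E}_{\cN}^{\cM}\,g^{-1}$ is again a $\tau$-preserving conditional expectation from $\cM$ onto $\cN$ (using that $\cN$ is $G$-invariant) and then invoke uniqueness one final time. Concretely, substituting into the tower identity and using the already-established $G$-equivariance of $\mathbb{E}_{\mathcal{P}}^{\cN}$, one obtains
\[
\mathbb{E}_{\mathcal{P}}^{\cM}=g\,\mathbb{E}_{\mathcal{P}}^{\cM}\,g^{-1}=\mathbb{E}_{\mathcal{P}}^{\cN}\circ\bigl(g\,\mathbb{E}_{\cN}^{\cM}\,g^{-1}\bigr).
\]
Composing with $\tau$ and using that $\tau\circ\mathbb{E}_{\mathcal{P}}^{\cN}=\tau|_{\cN}$ and $\tau\circ\mathbb{E}_{\mathcal{P}}^{\cM}=\tau$, the conjugate $g\,\mathbb{E}_{\cN}^{\cM}\,g^{-1}$ is seen to preserve $\tau$, so uniqueness forces $g\,\mathbb{E}_{\cN}^{\cM}\,g^{-1}=\mathbb{E}_{\cN}^{\cM}$, as required. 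The one subtlety to keep in mind is that $\tau$ is not assumed $G$-invariant, so one cannot simplify prematurely; however, the state-preservation of the various conditional expectations is exactly strong enough to make the uniqueness argument go through.
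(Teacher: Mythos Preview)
Your proposal is correct and follows essentially the same approach as the paper's proof: both use the tower identity (from uniqueness), deduce $G$-equivariance of $\mathbb{E}_{\mathcal{P}}^{\cN}$ by restriction, and then show $g\,\mathbb{E}_{\cN}^{\cM}\,g^{-1}$ is $\tau$-preserving so that uniqueness forces it to equal $\mathbb{E}_{\cN}^{\cM}$. Your write-up is in fact slightly more explicit about the subtlety that $\tau$ is not assumed $G$-invariant, but the argument is the same.
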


The following is an immediate corollary of Propositions \ref{prop:conditional expectation onto RN factor} and \ref{prop:commutingequivariance}.
\begin{proposition}
Let $(\cM,\tau)$ be a (not necessarily trace preserving) $G$-tracial von Neumann algebra. Assume that $\mathcal{N}\subset\mathcal{M}$ is a $G$-invariant subalgebra. Suppose that, for $g\in G$, we have
$$h_g:=(D(\tau\circ g):D\tau)\in L^1(\cN,\tau).$$ Then, the $\tau$-preserving conditional expectation $\mathbb{E}_{\mathcal{N}}:\mathcal{M}\to\mathcal{N}$ is $G$-equivariant.    
\end{proposition}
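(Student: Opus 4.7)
The plan is to deduce the statement as a direct consequence of Proposition~\ref{prop:conditional expectation onto RN factor} and Proposition~\ref{prop:commutingequivariance}, after one preliminary observation: the hypothesis $h_g \in L^1(\cN,\tau)$ forces the $G$-Radon-Nikodym factor $\cM_\RN$ to sit inside $\cN$. Once this is in hand, the chain $\cM_\RN \subset \cN \subset \cM$ puts us in exactly the setting to apply Proposition~\ref{prop:commutingequivariance}.

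First I would verify that $\cM_\RN \subset \cN$. By Definition~\ref{def:rnfactor}, $\cM_\RN$ is the von Neumann algebra generated by the unitaries $\{h_g^{it}\}_{g \in G,\,t \in \R}$. Since $h_g \in L^1(\cN,\tau)$ by hypothesis, the positive self-adjoint operator $h_g$ is affiliated with $\cN$, and the Borel functional calculus then places $h_g^{it} \in \cN$ for every $t \in \R$. Taking the von Neumann algebra generated, $\cM_\RN \subset \cN \subset \cM$.

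Next I would invoke Proposition~\ref{prop:conditional expectation onto RN factor} to conclude that $\mathbb{E}_\RN = \mathbb{E}^\cM_{\cM_\RN}$ is $G$-equivariant. Because $\tau$ is a trace on $\cM$, its modular automorphism group is trivial, so every von Neumann subalgebra is automatically modular-invariant; Takesaki's theorem \cite[Theorem IX.4.2]{TakesakiBookII} then supplies the $\tau$-preserving conditional expectations $\mathbb{E}^\cM_\cN$ and $\mathbb{E}^\cN_{\cM_\RN}$. With these in place, the chain $\cM_\RN \subset \cN \subset \cM$ satisfies the hypotheses of Proposition~\ref{prop:commutingequivariance}, and the corresponding direction of that proposition (from $G$-equivariance of $\mathbb{E}^\cM_{\cM_\RN}$ to $G$-equivariance of $\mathbb{E}^\cM_\cN$) immediately yields the desired conclusion.

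The only non-formal step is the containment $\cM_\RN \subset \cN$; the rest is a direct assembly of the two previously established propositions. I do not foresee any real obstacle here, since the affiliation argument yielding $h_g^{it} \in \cN$ is standard once one recalls the identification of $L^1(\cN,\tau)$ with a space of operators affiliated to $\cN$.
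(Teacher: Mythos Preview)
Your proposal is correct and matches the paper's own treatment exactly: the paper presents this proposition as an immediate corollary of Propositions~\ref{prop:conditional expectation onto RN factor} and~\ref{prop:commutingequivariance}, and the only step to fill in is precisely the containment $\cM_\RN\subset\cN$ that you spell out via the affiliation argument.
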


Let $G=G_1\times G_2$ and $\mu=\mu_1\times \mu_2$ as in the theorem. For a $(G,\mu)$-von Neumann algebra $(\cN,\tau)$-that is, one satisfying $\mu*\tau=\tau$. We denote by $\cN^{G_i}$ the subalgebra of $G_i$-invariant elements. The result below extends \cite[Proposition~1.10]{BS06} in two directions: first, by working with arbitrary $G$-von Neumann algebras (not necessarily tracial), and second, by removing the assumption of ergodicity.
\begin{proposition}\label{prop: =key ingredient in the introduction}
Let $(\mathcal{N},\tau)$ be a $G$-von Neumann algebra, where $G=G_1\times G_2$. Suppose that every von Neumann subalgebra of $\cN$ is the image of a unique $\tau$-preserving conditional expectation (in particular, when $\tau$ is a trace). Moreover, assume that $\tau$ is $\mu$-stationary, where $\mu=\mu_1\times\mu_2$. Then, the von Neumann algebra generated by $\mathcal{N}^{G_1}$ and $\mathcal{N}^{G_2}$ (denoted by $\mathcal{T}$) is a $G$-invariant subalgebra of $\mathcal{N}$. Moreover, the canonical conditional expectation  
\[\mathbb{E}_{\mathcal{T}}^{\cN}:\mathcal{N}\xrightarrow[]{} \mathcal{T}\]
is $G$-equivariant.
\end{proposition}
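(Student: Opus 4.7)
The plan is to combine the previous proposition with a decomposition of the Connes cocycle. First, $\mathcal{T}$ is $G$-invariant: since $G_1$ and $G_2$ commute in $G = G_1 \times G_2$, for any $x \in \cN^{G_1}$ and $g_2 \in G_2$ we have $g_1(g_2(x)) = g_2(g_1(x)) = g_2(x)$ for all $g_1 \in G_1$, so $\cN^{G_1}$ is $G$-stable (it is trivially $G_1$-invariant and $G_2$-invariant by the above); symmetrically $\cN^{G_2}$ is $G$-invariant, hence so is the join $\mathcal{T}$. To prove $\mathbb{E}_\mathcal{T}^{\cN}$ is $G$-equivariant, the previous proposition (applied with ambient algebra $\cN$ and subalgebra $\mathcal{T}$) reduces matters to showing $h_g := (D(\tau \circ g) : D\tau) \in L^1(\mathcal{T}, \tau)$ for every $g \in G$.

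Write $g = g_1 g_2$ with $g_i \in G_i$. The Connes cocycle identity yields $h_g = g_2^{-1}(h_{g_1}) \cdot h_{g_2}$, so the problem further reduces to showing $h_{g_1} \in \cN^{G_2}$ for every $g_1 \in G_1$ (the statement for $g_2 \in G_2$ being symmetric): indeed, then $g_2^{-1}(h_{g_1}) = h_{g_1} \in \mathcal{T}$ and $h_{g_2} \in \cN^{G_1} \subset \mathcal{T}$, so $h_g \in \mathcal{T}$. To prove the $G_2$-invariance of $h_{g_1}$, the strategy is to apply Lemma \ref{lem:h G-inv} to the $G_2$-action with respect to $\mu_2$: this requires both $\tau$ and $\tau \circ g_1$ to be normal faithful $\mu_2$-stationary traces on $\cN$ (the tracial property is automatic in the main case where $\tau$ is a trace, since traces are preserved under the automorphism $g_1$).

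The key input is that any $(\mu_1 \times \mu_2)$-stationary state on a $G_1 \times G_2$-von Neumann algebra is automatically $\mu_i$-stationary for each $i$. This follows from the fact that the $(G_1 \times G_2, \mu_1 \times \mu_2)$-Poisson boundary factorizes as $(B_1 \times B_2, \nu_{B_1} \times \nu_{B_2})$ with $G_i$ acting trivially on $B_j$ for $i \neq j$: combining the $G$-equivariant boundary map $\beta : B \to \mathrm{State}(\cN)$ from Theorem \ref{thm:boundary map} with the $\mu_i$-stationarity of $\nu_{B_i}$ and Fubini yields $\mu_i * \tau = \tau$. Granting $\mu_2$-stationarity of $\tau$, a short commutation argument gives
\[
\mu_2 * (\tau \circ g_1)(x) = \int_{G_2} \tau(g_2^{-1} g_1 x) \, \d\mu_2(g_2) = (\mu_2 * \tau)(g_1 x) = (\tau \circ g_1)(x),
\]
so that $\tau \circ g_1$ is also $\mu_2$-stationary, and Lemma \ref{lem:h G-inv} delivers the desired $G_2$-invariance of $h_{g_1}$. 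The main obstacle is the separation step---extracting separate $\mu_i$-stationarity from joint stationarity---which relies essentially on the product structure of the Poisson boundary; everything else is bookkeeping with the cocycle identity and the previous proposition.
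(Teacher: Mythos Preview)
Your approach is correct when $\tau$ is a trace, takes a genuinely different route from the paper, and leaves a gap in the general case as stated.

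The paper does not use Radon--Nikodym cocycles here. After establishing separate $\mu_i$-stationarity of $\tau$ (via Lemma~\ref{lem:bijection stationary states and ucp maps} rather than Theorem~\ref{thm:boundary map}, but to the same effect), it constructs $\mathbb{E}_{\cN^{G_i}}^\cN$ explicitly as a Ces\`aro--ultrafilter limit $\mathbb{E}_{\mu_i}=\lim_{n\to\omega} \frac{1}{n}\sum_{k=1}^n T_{\mu_i}^k$ of the convolution operator $T_{\mu_i}(x)=\int_{G_i}\sigma_g^{-1}(x)\,\d\mu_i(g)$, observes (citing \cite{bader2022charmenability}) that $\mathbb{E}_{\mu_1}$ is $G_2$-equivariant and $\mathbb{E}_{\mu_2}$ is $G_1$-equivariant, and then applies Proposition~\ref{prop:commutingequivariance} separately for $G_1$ and $G_2$ to the chains $\cN^{G_i}\subset\mathcal{T}\subset\cN$ to conclude that $\mathbb{E}_\mathcal{T}^\cN$ is both $G_1$- and $G_2$-equivariant.

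Your route via $h_g\in L^1(\mathcal{T},\tau)$ is more direct---no ultrafilter, no external averaging lemma---and in fact exactly mirrors the cocycle computation that the paper carries out later in the proof of Theorem~\ref{thm:NC IFT} (where the same chain-rule decomposition and the same appeal to Lemma~\ref{lem:h G-inv} appear). The cost is generality: both the ``previous proposition'' you invoke and Lemma~\ref{lem:h G-inv} are stated only for \emph{traces}, and the very interpretation of $h_g=(D(\tau\circ g):D\tau)$ as an element of $L^1(\cN,\tau)$ presupposes traciality. The proposition as stated allows any $\tau$ with the unique-conditional-expectation property; your parenthetical ``in the main case where $\tau$ is a trace'' acknowledges this but does not close the gap. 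The paper's averaging argument works for arbitrary such $\tau$ because it never leaves the level of ucp maps and states.
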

\begin{proof}
Since $\mathcal{N}^{G_i}$ is a $G$-von Neumann algebra, the von Neumann algebra generated by them is automatically $G$-invariant. We now claim that $\tau$ is $\mu_i$-stationary for each $i=1,2$. Since $(\cN,\tau)$ is $\mu$-stationary, by passing to a compact model without any loss of generality, using Lemma~\ref{lem:bijection stationary states and ucp maps}, there exists a $\tau$-preserving $G$-ucp map $\Phi: \cN \to L^\infty(B_1,\nu_{B_1}) \bar{\otimes} L^\infty(B_2,\nu_{B_2})$. Composing it with $\text{id} \otimes \nu_i$,  we get a $\tau$- preserving $G_i$-ucp map from $\mathcal{N}$ to $L^\infty(B_i,\nu_{B_i})$. Again, by Lemma~\ref{lem:bijection stationary states and ucp maps}, it follows that $\tau$ is $\mu_i$-stationary for each $i=1,2$.

Let $(G,\mu)$ and $\cN$ be as above. For each $i=1,2$, define the convolution ucp map $T_{\mu_i}: \cN\to \cN$ by
\[
T_{\mu_i}(x)
= \int_{G_i} \sigma_{g}^{-1}(x)\, d\mu_i(g).
\]
Since $\mu_i * \tau = \tau$, we have $\tau \circ T_{\mu_i} = \tau$. 
As $\tau \in \cN_*$ is faithful, this implies that $T_{\mu_i}:\cN\to \cN$ is
a faithful normal ucp map. Next, choose a non-principal ultrafilter
$\omega \in \beta(\mathbb N)\setminus \mathbb N$ and define
\[
\mathbb{E}_{\mu_i}:= 
\lim_{n\to \omega} \frac{1}{n} \sum_{k=1}^n T_{\mu_i}^k(x).
\]
The limit is taken in the ultra-weak topology. Observe that $\mathbb{E}_{\mu_i}$ 
is a ucp map on $\cN$, it is idempotent, and its image is the set of 
elements invariant under $T_{\mu_i}$. This construction has already been used in \cite[Proposition~2.7]{bader2022charmenability}. It follows from \cite[Lemma~2.8]{bader2022charmenability} that $\mathbb{E}_{\mu_1}$ is $G_2$-equivariant and $\mathbb{E}_{\mu_2}$ is $G_1$-equivariant. Since $\tau\circ\mathbb{E}_{\mu_i}=\tau$, it follows that $\mathbb{E}_{\mu_i}=\mathbb{E}_{\cN^{G_i}}^{\cN}$ for each $i=1,2$. Now, observe that we have the following commuting diagram for each $i=1,2$.
\[
\begin{tikzcd}[column sep=huge] 
\mathcal{N} 
  \arrow[r, "\;\mathbb{E}_{\mathcal{T}}^{\cN}\;"] 
  \arrow[rr, bend right=20, red, swap, "\;\mathbb{E}_{\mathcal{N}^{G_i}}^{\mathcal{N}}\;"] 
& \mathcal{T} 
  \arrow[r, "\;\mathbb{E}_{\mathcal{N}^{G_i}}^{\mathcal{T}}\;"] 
& \mathcal{N}^{G_i}
\end{tikzcd}
\]
Therefore, using Proposition~\ref{prop:commutingequivariance} twice separately for $G_1$ and $G_2$, we obtain that the canonical conditional expectation  $\mathbb{E}_{\mathcal{T}}^{\cN}$ is $G$-equivariant.   
\end{proof}
\subsection{Master theorem of Glasner-Weiss}
More recently, Glasner and Weiss \cite{glasner2023intermediate} proved a \say{master theorem} that controls intermediate factors in measurable dynamics. We do a similar analysis for commutative subalgebras inside tensor products of tracial von Neumann algebras. Assume that $(\mathcal{M},\tau_{\mathcal{M}})$ and $(\mathcal{N},\tau_{\mathcal{N}})$ are $\Gamma$-von Neumann algebras such that the action is trace-preserving and ergodic. 
Consider $$\mathcal{N}\subset\mathcal{Q}\subset\mathcal{M}\otimes\mathcal{N}.$$

Let us, for the time being, assume that $\mathcal{N}$ is commutative, i.e., $\mathcal{N}=L^{\infty}(Y,\nu)$ for some pmp ergodic action $\Gamma\curvearrowright (Y,\nu)$. Also assume that $\mathcal{Q}=L^{\infty}(Q,\eta)$ for some pmp ergodic action $\Gamma\curvearrowright (Q,\eta)$. 
And we have the following situation $$L^{\infty}(Y,\nu)\subset L^{\infty}(Q,\eta)\subset(\mathcal{M},\tau_{\mathcal{M}})\otimes L^{\infty}(Y,\nu).$$ 
\begin{proposition}
\label{prop:GSMaster}Let $(\mathcal{M},\tau_{\mathcal{M}})$ 
  be a $\Gamma$-von Neumann algebras such that the action is trace-preserving and ergodic.  Consider the following situation:   
$$L^{\infty}(Y,\nu)\subset L^{\infty}(Q,\eta)\subset(\mathcal{M},\tau_{\mathcal{M}})\otimes L^{\infty}(Y,\nu).$$
Then, there exists an unital separable ultra-weakly dense $\Gamma$-$C^*$-algebra $\mathcal{A}\subset\mathcal{M}$, a measure $\xi\in \text{Prob}(S(\mathcal{A}))$ and a one-to-one map $$J: Q\to Y\times S(\mathcal{A})$$ such that $J_*\eta=\nu\vee\xi$, a joining of $\nu$ and $\xi$.
\begin{proof}
Let $\mathbb{E}_Q:(\mathcal{M},\tau_{\mathcal{M}})\otimes L^{\infty}(Y,\nu)\to L^{\infty}(Q,\eta)$ denote the canonical conditional expectation. Let $\mathcal{A}$ be an ultraweakly dense unital separable $\Gamma$-invariant $C^*$-subalgebra of $(\mathcal{M},\tau_{\mathcal{M}})$ (see \cite[proposition~2.1]{amrutam2024subalgebras}). Then, $\mathbb{E}_Q|_{\mathcal{A}}:\mathcal{A}\to L^{\infty}(Q,\eta)$ is a $\Gamma$-equivariant ucp map (the map $\mathbb{E}_{Q}$ is $\Gamma$-equivariant to start with). Correspondingly, dualizing it, we obtain a $\Gamma$-equivariant measurable map $\varphi: Q\to S(\mathcal{A})$  defined by
$$\varphi(q)(a)=\mathbb{E}_{Q}(a)(q), ~q\in Q\text{ and } a\in\mathcal{A}.$$
A proof for the above can be found in \cite{bassi2020separable}.
Obviously, for $a\in\mathcal{A}$, we have
$$\int_Q\varphi(q)(a)d\eta(q)=\eta\circ\mathbb{E}_Q(a)=\tau_{\mathcal{M}}(a)$$

Let $\theta: Q\to Y$ denote the corresponding factor map. So, now we can define $J: Q\to Y\times S(\mathcal{A})$ defined by $J(q)=(\theta(q), \varphi(q))$. We claim that $J$ is the required map. Let $\xi=\varphi_*\eta$.\\
\textit{Claim-1: $J$ is one-to-one}. Let us assume that $J(q)=J(q')$ for some $q,q'\in Q$. In particular, we obtain that $\varphi(q)=\varphi(q')$. Therefore, we obtain that  $\mathbb{E}_{Q}(a)(q)=\mathbb{E}_{Q}(a)(q')$ for all $a\in\mathcal{A}$. Hence, for all $f\in L^{\infty}(Y,\nu)$ and for all $a\in\mathcal{A}$, we see that \[\mathbb{E}_{Q}(a\otimes f)(q)=\mathbb{E}_{Q}(a)(q)f(\theta(q))=\mathbb{E}_{Q}(a)(q)f(\theta(q'))=\mathbb{E}_{Q}(a\otimes f)(q')\] 
Since $\mathcal{A}$ is ultraweakly dense, it follows that $q=q'$.\\
\textit{Claim-2:} $J_*\eta=\nu\vee\xi$.\\
Let $p: Y\times S(\mathcal{A})\to Y$ and $q: Y\times S(\mathcal{A})\to S(\mathcal{A})$ denote the corresponding projections. We shall show that $p_*(J_*\eta)=\nu$ and $q_*(J_*\eta)=\xi$ from whence the claim will follow. For $f\in L^{\infty}(Y,\nu)$, we have
\begin{align*}
\int_Yf(y)dp_*(J_*\eta)(y)&=\int_Yfd(p\circ J)_*\eta(y)\\&=\int_Qf(p\circ J)(q)d\eta(q)=\int_Q f(\theta(q))d\eta(q)\\&=\int_Yf(y)d(\theta_*\eta)(y)\\&=\int_Yf(y)d\nu(y).    
\end{align*}
Similarly, for any $a\in C(S(\mathcal{A}))$, we have
\begin{align*}
\int_{S(\mathcal{A})}a(\tau)dq_*(J_*\eta)(\tau)&=\int_{S(\mathcal{A})}a(\tau)d(q\circ J)_*\eta(\tau)\\&=\int_Qa(q\circ J)(q')d\eta(q')=\int_Q a(\varphi(q'))d\eta(q')\\&=\int_{S(\mathcal{A})}a(\tau)d(\varphi_*\eta)(\tau)\\&=\int_{S(\mathcal{A})}a(\tau)d\xi(\tau).    
\end{align*}
The proof is now complete. 
\end{proof}
\end{proposition}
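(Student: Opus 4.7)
The plan is to realize $Q$ as a measurable subset of $Y\times S(\mathcal{A})$ for a suitably chosen separable $\Gamma$-invariant $C^*$-algebra $\mathcal{A}\subset\mathcal{M}$, with the two coordinate projections recovering respectively the factor map $Q\to Y$ and a state-valued map $Q\to S(\mathcal{A})$ built from the canonical conditional expectation onto $L^\infty(Q,\eta)$. The joining and marginal assertions will then be immediate consequences of the construction.

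The first step is to pick a unital, $\Gamma$-invariant, ultra-weakly dense separable $C^*$-subalgebra $\mathcal{A}\subset\mathcal{M}$, which exists by separability of the predual. Next, let $\mathbb{E}_Q:\mathcal{M}\bar{\otimes}L^\infty(Y,\nu)\to L^\infty(Q,\eta)$ be the canonical trace-preserving conditional expectation; it is $\Gamma$-equivariant since both $L^\infty(Q,\eta)$ and the ambient algebra are $\Gamma$-invariant. Restricting to $\mathcal{A}$ gives a $\Gamma$-equivariant ucp map $\mathcal{A}\to L^\infty(Q,\eta)$, which I would dualize to a $\Gamma$-equivariant measurable map $\varphi:Q\to S(\mathcal{A})$ defined by $\varphi(q)(a)=\mathbb{E}_Q(a)(q)$. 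Combining $\varphi$ with the factor map $\theta:Q\to Y$ dual to the inclusion $L^\infty(Y,\nu)\hookrightarrow L^\infty(Q,\eta)$ yields $J(q):=(\theta(q),\varphi(q))$. Setting $\xi:=\varphi_*\eta$, a direct change-of-variables computation shows that the two marginals of $J_*\eta$ are precisely $\nu$ and $\xi$, so $J_*\eta$ is a joining.

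The main obstacle is verifying that $J$ is one-to-one. Suppose $J(q)=J(q')$. Then $\theta(q)=\theta(q')$ and $\mathbb{E}_Q(a)(q)=\mathbb{E}_Q(a)(q')$ for every $a\in\mathcal{A}$. Because elements of $L^\infty(Y,\nu)$ act on $L^\infty(Q,\eta)$ by pullback along $\theta$, bimodularity of $\mathbb{E}_Q$ over $L^\infty(Y,\nu)$ gives $\mathbb{E}_Q(a\otimes f)(q)=\mathbb{E}_Q(a\otimes f)(q')$ for all $a\in\mathcal{A}$ and $f\in L^\infty(Y,\nu)$. I would then promote this equality from the algebraic tensor $\mathcal{A}\odot L^\infty(Y,\nu)$ to the full ambient algebra $\mathcal{M}\bar{\otimes}L^\infty(Y,\nu)$ using ultra-weak density together with normality of $\mathbb{E}_Q$. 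Since $\mathbb{E}_Q$ surjects onto $L^\infty(Q,\eta)$ and the latter separates points of $Q$ modulo null sets, this forces $q=q'$ almost everywhere, completing the injectivity argument.
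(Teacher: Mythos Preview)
Your proposal is correct and follows essentially the same route as the paper's proof: choose a separable $\Gamma$-invariant ultra-weakly dense $C^*$-subalgebra $\mathcal{A}\subset\mathcal{M}$, dualize $\mathbb{E}_Q|_{\mathcal{A}}$ to a state-valued map $\varphi:Q\to S(\mathcal{A})$, set $J=(\theta,\varphi)$ and $\xi=\varphi_*\eta$, check the marginals by change of variables, and deduce injectivity from $\mathbb{E}_Q(a\otimes f)(q)=\mathbb{E}_Q(a)(q)\,f(\theta(q))$ together with ultra-weak density. Your write-up is in fact slightly more explicit than the paper's in spelling out the role of normality of $\mathbb{E}_Q$ and bimodularity over $L^\infty(Y,\nu)$ in the injectivity step.
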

\begin{remark}
We do not need $\tau_{\mathcal{M}}$ to be measure-preserving. We just need $\mathbb{E}_Q$ to be an equivariant map.
\end{remark}
\subsection{Some Examples of Splitting} In this subsection, we give some new examples of splitting theorems in the von Neumann setup. 
\begin{lemma}
\label{lem:factor}
Let $\Gamma$ be an ICC group. Assume that $(\mathcal{M},\tau)$ is a $(\Gamma,\mu)$-ergodic space in the sense that $\tau$ is $\mu$-stationary and $\text{Supp}(\mu)$ generates $\Gamma$. Then the action of $
\Gamma$ on the crossed product $\mathcal{M}\rtimes\Gamma$ by conjugation is also ergodic.    
\end{lemma}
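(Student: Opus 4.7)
The plan is to establish ergodicity of the conjugation action via Fourier analysis in $\cM\rtimes\Gamma$, adapted to the fact that $\tau$ is $\mu$-stationary but not necessarily $\Gamma$-invariant. Let $x\in(\cM\rtimes\Gamma)^\Gamma$, so $\lambda_g x\lambda_g^{-1}=x$ for every $g\in\Gamma$, and expand $x=\sum_{h\in\Gamma}x_h\lambda_h$ in $L^2$ of the canonical state $\widetilde\tau(\sum x_h\lambda_h)=\tau(x_e)$. The invariance translates directly to the covariance $x_{ghg^{-1}}=\alpha_g(x_h)$ for all $g,h\in\Gamma$. Setting $h=e$ yields $x_e\in\cM^\Gamma$; since the running hypothesis (with $\mathrm{supp\,}\mu$ generating $\Gamma$) forces $\cM^\Gamma=\C$, we conclude $x_e\in\C$.

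For $h_0\neq e$, the ICC hypothesis guarantees $|[h_0]|=[\Gamma:Z_\Gamma(h_0)]=\infty$. The canonical conditional expectation $\mathbb{E}_\cM:\cM\rtimes\Gamma\to\cM$ is $\Gamma$-equivariant for the conjugation action on the source and the original action on the target, so $\mathbb{E}_\cM(x^*x)\in\cM^\Gamma=\C$. Expanding $\mathbb{E}_\cM(x^*x)=\sum_h\alpha_{h^{-1}}(x_h^*x_h)$ and grouping by conjugacy class, each class-sum is itself $\Gamma$-invariant by a short computation using the covariance relation, hence a scalar. Using $x_{gh_0g^{-1}}=\alpha_g(x_{h_0})$---which in particular gives $\alpha_z(x_{h_0})=x_{h_0}$ for $z\in Z_\Gamma(h_0)$---the $[h_0]$-contribution rewrites as
\[
\sum_{\bar g\in\Gamma/Z_\Gamma(h_0)}\alpha_g(a)=c_{h_0}\cdot 1,\qquad a:=\alpha_{h_0^{-1}}(x_{h_0}^*x_{h_0})\in\cM_+,
\]
with $a$ being $Z_\Gamma(h_0)$-invariant (so the sum is well-defined on cosets) and $c_{h_0}\in[0,\infty)$.

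To extract $a=0$ I would invoke $\mu$-stationarity. Applying $\tau$ shows $\varphi(\bar g):=\tau(\alpha_g(a))$ lies in $\ell^1(\Gamma/Z_\Gamma(h_0))$; iterating $\tau=\mu*\tau$ to $\tau=\mu^{*n}*\tau$ gives
\[
\tau(a)=\int_\Gamma\tau(\alpha_g(a))\,d\mu^{*n}(g)=\sum_{\bar g\in\Gamma/Z_\Gamma(h_0)}\varphi(\bar g)\,\bar\mu_n(\bar g)
\]
for every $n\geq 1$, where $\bar\mu_n$ is the pushforward of $\mu^{*n}$ to $\Gamma/Z_\Gamma(h_0)$. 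Since $\mathrm{supp\,}\mu$ generates the group while $[\Gamma:Z_\Gamma(h_0)]=\infty$, a random-walk dispersion argument on the coset space gives $\bar\mu_n(\bar g)\to 0$ pointwise; dominated convergence against $\varphi\in\ell^1$ then forces $\tau(a)=0$ and hence $a=0$, i.e., $x_{h_0}=0$. The step I expect to be the main obstacle is this last pointwise dispersion: when $Z_\Gamma(h_0)$ is itself infinite, $\bar\mu_n(\bar g)$ sums $\mu^{*n}$-mass over an infinite coset, so one cannot directly invoke the elementary fact that $\mu^{*n}(g)\to 0$ on an infinite group, and the genuine random-walk argument on the coset space, exploiting the absence of any $\Gamma$-invariant probability measure there, is the technical heart of the proof.
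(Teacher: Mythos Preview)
Your setup through the covariance relation $x_{ghg^{-1}}=\alpha_g(x_h)$ and the reduction $x_e\in\C$ mirrors the paper exactly. The divergence comes at the endgame: you aim to prove $\tau(a)=0$ via pointwise decay $\bar\mu_n(\bar g)\to 0$ of the pushed-forward random walk on $\Gamma/Z_\Gamma(h_0)$, and you correctly flag this as the obstacle. It is a genuine gap: when $Z_\Gamma(h_0)$ is infinite, $\bar\mu_n(\bar g)=\mu^{*n}\bigl(gZ_\Gamma(h_0)\bigr)$ sums $\mu^{*n}$ over an infinite coset, and there is no general reason this tends to zero---nor does the absence of an invariant probability measure on the coset space imply it.

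The paper bypasses dispersion entirely via a maximum principle. It works with $\xi(h):=\|x_h\|_\tau^2$ directly on $\Gamma$ and observes, from $x_{ghg^{-1}}=\alpha_g(x_h)$ and $\mu$-stationarity of $\tau$, that
\[
\int_\Gamma \xi(ghg^{-1})\,d\mu(g)=\int_\Gamma\tau\bigl(\alpha_g(x_h^*x_h)\bigr)\,d\mu(g)=\tau(x_h^*x_h)=\xi(h),
\]
so $\xi$ is $\mu$-harmonic for the conjugation action. Since $\sum_h\xi(h)=\tau\bigl(\mathbb{E}_\cM(b^*b)\bigr)<\infty$ (with $b=x-x_e$), $\xi\in\ell^1(\Gamma)$ attains its supremum on a nonempty finite set $F$; harmonicity forces $F$ to be stable under conjugation by $\mathrm{supp\,}\mu$, hence (being finite, so that each such conjugation is a bijection of $F$) under all of $\Gamma$. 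The ICC hypothesis gives $F=\{e\}$, and $\xi(e)=0$ yields $\xi\equiv 0$.

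Your own function $\varphi(\bar g)=\tau(\alpha_g(a))$ is in fact $\mu$-harmonic on $\Gamma/Z_\Gamma(h_0)$ for left translation---stationarity gives $\varphi(\bar k)=\int_\Gamma\varphi(\overline{gk})\,d\mu(g)$---so the same maximum-principle argument on this infinite transitive $\Gamma$-set forces $\varphi\equiv 0$ and closes your proof with no dispersion input. Thus your route is not wrong, only the proposed closing mechanism; replacing dispersion by the maximum principle recovers the paper's argument, carried out conjugacy-class by conjugacy-class rather than globally on $\Gamma$.
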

\begin{proof}
The proof is essentially the same as in \cite[Lemma~2.16]{kalantar2023invariant}. 
Let $a\in\mathcal{M}\rtimes\Gamma$ be $\Gamma$-fixed, and set $b=a-\mathbb{E}(a)$, where $\mathbb{E}: \mathcal{M}\rtimes \Gamma\rightarrow \mathcal{M}$ denotes the canonical conditional expectation. Then $b$ is $\Gamma$-fixed and $\mathbb{E}(b) = 0$. For $g\in\Gamma$, let $\varphi(g):=\mathbb{E}(b\lambda(g)^{-1}) \in\mathcal{M}$. Since $b$ is fixed by $\Gamma$, the map $\Gamma\ni g\overset{\xi}{\mapsto}\|\varphi(g)\|_{\tau}^2\in\mathbb{C}$ is $\mu$-harmonic (for the conjugate action). Moreover,
$$\sum_{g\in\Gamma}\|\varphi(g)\|_{\tau}^2=\tau(\mathbb{E}(bb^*))<\infty.$$

Therefore the map $\xi:\Gamma\to\mathbb{C}, g\mapsto \|\varphi(g)\|_{\tau}^2$ is in $\ell^1(\Gamma)$. Thus, $\xi$ attains its maximum values on a finite conjugate-invariant subset of $\Gamma$, that is $\{e\}$ by the ICC assumption. Therefore, $\xi(e)=0$  and hence, $\xi = 0$. So , $b=0$  and $a=\mathbb{E}(a)$. The element $\mathbb{E}(a)\in\mathcal{M}$ is $\Gamma$-invariant, hence constant by the ergodicity of $\Gamma\curvearrowright\mathcal{M}$.
    
\end{proof}
The following is a measurable version of \cite[Proposition~1.2]{amrutam2025splitting}. In the measurable setup, the result is merely an observation from \cite{ge1996tensor} put together with Lemma~\ref{lem:factor}.
\begin{thm}
\label{thm:vnsplitting}
Let $G$ be a countable ICC group. Let $\mathcal{M}$ be a $G$-von Neumann algebra. Let $H\le G$ be such that $(\mathcal{M},\tau)$ is a $(H,\mu)$-ergodic space.  Let $\mathcal{N}$ be a $G$-von Neumann algebra such that $H\curvearrowright \mathcal{N}$ is trivial. Then, every $G$-invariant von Neumann subalgebra $\mathcal{M}\bar{\otimes} \mathbb{C}\subset\mathcal{C}\subset \mathcal{M}\bar{\otimes} \mathcal{N}$ splits. 
\begin{proof}
  This is a direct corollary of the Ge-Kadison splitting theorem. To see this, set $\mathcal{Q}=\mathcal{M}\rtimes H$. It follows from Lemma~\ref{lem:factor} that $Q$ is a factor. Thus, by considering the inclusion of von Neumann algebras: $\mathcal{Q}\subseteq \mathcal{C}\rtimes H\subseteq (\mathcal{M}\bar{\otimes} \mathcal{N})\rtimes H=\mathcal{Q}\bar{\otimes} \mathcal{N}$. Hence $\mathcal{C}\rtimes H=\mathcal{Q}\bar{\otimes} \mathcal{P}=(\mathcal{M}\bar{\otimes} \mathcal{P})\rtimes H$ for some $\mathcal{P}\subseteq \mathcal{N}$. Then $\mathcal{C}=\mathcal{M}\bar{\otimes} \mathcal{P}$.
\end{proof}
\end{thm}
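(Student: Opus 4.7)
The plan is to reduce the assertion to the classical Ge--Kadison splitting theorem for tensor products with a factor, by passing to crossed products by $H$. The crucial observation is that the triviality of the $H$-action on $\mathcal{N}$ lets the crossed product commute cleanly with the second tensor factor, while the $(H,\mu)$-ergodicity of $(\mathcal{M},\tau)$, combined with the ICC hypothesis, promotes $\mathcal{M}\rtimes H$ to a factor through Lemma~\ref{lem:factor}.

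Concretely, I would set $\mathcal{Q}:=\mathcal{M}\rtimes H$. First, I would check that $\mathcal{Q}$ is a factor: any central element is in particular fixed by the conjugation action of $H$, which by Lemma~\ref{lem:factor} forces it to be a scalar. Since $\mathcal{C}$ is $G$-invariant and $H\leq G$, the subalgebra $\mathcal{C}$ is $H$-invariant, so I may form the crossed product $\mathcal{C}\rtimes H$ inside $(\mathcal{M}\bar{\otimes}\mathcal{N})\rtimes H$. Triviality of $H\curvearrowright\mathcal{N}$ yields a canonical identification
\[
(\mathcal{M}\bar{\otimes}\mathcal{N})\rtimes H \;=\; (\mathcal{M}\rtimes H)\bar{\otimes}\mathcal{N} \;=\; \mathcal{Q}\bar{\otimes}\mathcal{N},
\]
so I obtain a chain
\[
\mathcal{Q}\;\subseteq\;\mathcal{C}\rtimes H\;\subseteq\;\mathcal{Q}\bar{\otimes}\mathcal{N}
\]
with $\mathcal{Q}$ a factor. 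Applying Ge--Kadison then produces a von Neumann subalgebra $\mathcal{P}\subseteq\mathcal{N}$ such that $\mathcal{C}\rtimes H=\mathcal{Q}\bar{\otimes}\mathcal{P}=(\mathcal{M}\bar{\otimes}\mathcal{P})\rtimes H$.

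The hardest step, as I anticipate it, is the final descent from the equality of crossed products $\mathcal{C}\rtimes H=(\mathcal{M}\bar{\otimes}\mathcal{P})\rtimes H$ to the actual splitting $\mathcal{C}=\mathcal{M}\bar{\otimes}\mathcal{P}$ of subalgebras of $\mathcal{M}\bar{\otimes}\mathcal{N}$. The natural route is to apply the canonical conditional expectation from the crossed product back onto the base algebra; equivalently, to intersect each side with $\mathcal{M}\bar{\otimes}\mathcal{N}$. One then needs to know that each such intersection recovers the base part: that $(\mathcal{C}\rtimes H)\cap(\mathcal{M}\bar{\otimes}\mathcal{N})=\mathcal{C}$ and that $((\mathcal{M}\bar{\otimes}\mathcal{P})\rtimes H)\cap(\mathcal{M}\bar{\otimes}\mathcal{N})=\mathcal{M}\bar{\otimes}\mathcal{P}$, which is standard once one verifies that the two inclusions into $(\mathcal{M}\bar{\otimes}\mathcal{N})\rtimes H$ respect the Fourier decomposition in the $H$-variable. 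A secondary subtlety worth flagging is that Lemma~\ref{lem:factor} is stated for an ICC acting group, whereas only $G$ (and not a priori $H$) is assumed ICC in the theorem; one should either invoke it under the additional assumption that $H$ is ICC (which is mild in the intended applications) or argue directly that the center of $\mathcal{M}\rtimes H$ is annihilated already by combining ICC of $G$ with $(H,\mu)$-ergodicity of $\mathcal{M}$.
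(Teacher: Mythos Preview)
Your proposal is correct and follows exactly the same route as the paper's proof: pass to crossed products by $H$, identify $(\mathcal{M}\bar{\otimes}\mathcal{N})\rtimes H$ with $(\mathcal{M}\rtimes H)\bar{\otimes}\mathcal{N}$ using triviality of $H\curvearrowright\mathcal{N}$, apply Ge--Kadison after Lemma~\ref{lem:factor} makes $\mathcal{M}\rtimes H$ a factor, and then descend. The paper's argument is in fact terser than yours---it does not spell out either the descent via the canonical conditional expectation or the ICC discrepancy between $G$ and $H$ that you flag; the latter is a genuine wrinkle that the paper also leaves implicit.
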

\begin{example}
Let $\Gamma$ be a countable group with a non-trivial amenable radical. Let $(X,\nu)$ be a $\mu$-USB for some $\mu\in\text{Prob}(\Gamma)$ (see \cite[Definition~3.9]{HartKal} to know what $\mu$-USB means). It follows from \cite[Proposition~3.12]{HartKal} that $\text{Rad}(\Gamma)\subset\text{Ker}(\Gamma\curvearrowright(X,\nu))$. Now, let $(\mathcal{M},\tau)$ be any $G$-von Neumann algebra which is ergodic with respect to $H\le \text{Rad}(\Gamma)$. (Say, a pmp $G$-action $(Z,\xi)$ on which $H$ acts ergodically.) It follows from Theorem~\ref{thm:vnsplitting} that every $G$-invariant intermediate von Neumann subalgebra $\mathcal{C}$ of the form $$\mathcal{M}\bar{\otimes} \mathbb{C}\subset\mathcal{C}\subset \mathcal{M}\bar{\otimes} L^{\infty}(X,\nu)$$ splits.
\end{example}
An intermediate factor $(X\times Y,\mu\times\nu)\ \to\ (Z,\zeta)\ \to\ (Y,\nu)$
can be disintegrated over $(Y,\nu)$ as a measurable field $y\mapsto\mathcal A_y\subset L^\infty(X,\mu)$.
Non-product behavior means the fiber $\mathcal A_y$ depends on $y$ (a skew coupling).
If $\Aut(G\curvearrowright (Y,\nu))$ acts ergodically on $(Y,\nu)$ and globally preserves $L^\infty(Z,\zeta)$,
then the map $y\mapsto\mathcal A_y$ is invariant under an ergodic action, then
$\mathcal A_y=\mathcal A$ almost everywhere, therefore the intermediate factor splits as a product. The hypotheses exactly rule out
$y$-dependent (cocycle) twists, forcing independence over the $Y$-coordinate.
Following the arguments in \cite[Lemma 4.1]{ben-pinsker}, we show the following. 
\begin{thm}
\label{thm:ben}
Let $G\curvearrowright (X,\mathcal{B}_X, \mu)$ and $G\curvearrowright (Y, \mathcal{B}_Y, \nu)$ be two pmp actions. Let $G\curvearrowright (Z,\mathcal{B}_Z, \zeta)$ be an intermediate factor with $(X\times Y,\mu\times \nu)\rightarrow (Z,\zeta)\rightarrow (Y,\nu)$. Let  $Aut(G\curvearrowright (Y,\nu))$ be the group of all measure-preserving transformations $\phi: Y\rightarrow Y$ so that $\phi(gy)=g\phi(y)$ for a.e. $y\in Y$ and all $g\in G$.
Assume that $Aut(G\curvearrowright (Y,\nu))$ acts ergodically on $(Y,\nu)$ and fixes $L^{\infty}(Z,\zeta)$ globally, then $G\curvearrowright (Z,\zeta)$ splits as a product. 
\end{thm}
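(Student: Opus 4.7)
The plan is to realize $L^{\infty}(Z,\zeta)$ as a measurable field of intermediate subalgebras of $L^{\infty}(X,\mu)$ parametrized by $y \in Y$, and then exploit the ergodic action of $\mathrm{Aut}(G\curvearrowright(Y,\nu))$ to force this field to be essentially constant. This is exactly the von Neumann-algebraic rephrasing of the argument in \cite{ben-pinsker}.

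First, I would view the chain of inclusions
\[
L^{\infty}(Y,\nu)\ \subseteq\ L^{\infty}(Z,\zeta)\ \subseteq\ L^{\infty}(X\times Y,\mu\times\nu)
\]
under the standard identification $L^{\infty}(X\times Y,\mu\times\nu)\cong L^{\infty}(Y,\nu;L^{\infty}(X,\mu))$. Because $L^{\infty}(Z,\zeta)$ is a von Neumann subalgebra which contains $L^{\infty}(Y,\nu)$ and is therefore an $L^{\infty}(Y,\nu)$-module, a disintegration argument (using that the space of von Neumann subalgebras of $L^{\infty}(X,\mu)$ is a standard Borel space, e.g.\ via the Effros--Mar\'echal topology or via Hausdorff distance on $L^{2}$-unit balls) yields a measurable assignment $y\mapsto \mathcal{A}_y$ of von Neumann subalgebras of $L^{\infty}(X,\mu)$ such that
\[
L^{\infty}(Z,\zeta)=\bigl\{\,f\in L^{\infty}(Y,L^{\infty}(X))\ :\ f(y)\in\mathcal{A}_y\text{ for a.e.\ }y\,\bigr\}.
\]
The $G$-invariance of $L^{\infty}(Z,\zeta)$, after absorbing null sets using the countability of $G$, translates into the equivariance condition $g\cdot\mathcal{A}_y=\mathcal{A}_{gy}$ for a.e.\ $y$.

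Next I would plug in an arbitrary $\phi\in\mathrm{Aut}(G\curvearrowright(Y,\nu))$. Since $\phi$ acts only on the $Y$-coordinate (trivially on $X$), the hypothesis that $\phi$ globally preserves $L^{\infty}(Z,\zeta)$ translates via the same disintegration into $\mathcal{A}_{\phi(y)}=\mathcal{A}_y$ for a.e.\ $y$. Thus the Borel map $y\mapsto\mathcal{A}_y$ is $\mathrm{Aut}(G\curvearrowright(Y,\nu))$-invariant, and by the assumed ergodicity of this automorphism group on $(Y,\nu)$ it must be essentially constant: $\mathcal{A}_y=\mathcal{A}$ for a.e.\ $y$, for a single von Neumann subalgebra $\mathcal{A}\subseteq L^{\infty}(X,\mu)$. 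Combining with the $G$-equivariance of the field then gives $g\cdot\mathcal{A}=\mathcal{A}$, so $\mathcal{A}$ is itself a $G$-invariant subalgebra, and
\[
L^{\infty}(Z,\zeta)=\mathcal{A}\,\bar{\otimes}\,L^{\infty}(Y,\nu).
\]
Writing $\mathcal{A}=L^{\infty}(W,\eta)$ for a $G$-factor $(W,\eta)$ of $(X,\mu)$ yields the desired measurable isomorphism $(Z,\zeta)\cong (W,\eta)\times(Y,\nu)$ of $G$-spaces.

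The main obstacle, as usual with such fibered arguments, is the measurability step: making precise the disintegration $y\mapsto\mathcal{A}_y$ and its $G$- and $\mathrm{Aut}$-equivariance. This requires a Polish structure on the set of von Neumann subalgebras of $L^{\infty}(X,\mu)$ (together with a selection theorem to produce the field $y\mapsto\mathcal{A}_y$ from the intermediate inclusion). Once this setup is in place the remaining ergodic-rigidity step---collapsing the field under the ergodic $\mathrm{Aut}$-action and reading off the product structure---is essentially immediate.
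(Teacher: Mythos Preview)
Your argument is correct and is conceptually the same as the paper's, but the implementation is genuinely different. You disintegrate $L^{\infty}(Z)$ as a measurable field $y\mapsto\mathcal{A}_y$ of subalgebras of $L^{\infty}(X)$ (direct--integral picture), then use the ergodic $\mathrm{Aut}(G\curvearrowright(Y,\nu))$-action to collapse the field to a constant. The paper instead works with the conditional expectation $\mathbb{E}_{\mathcal{C}}$ as an orthogonal projection on $L^{2}(X\times Y)$: it verifies directly that $\mathbb{E}_{\mathcal{C}}$ commutes with $1\otimes m_f$ (from $L^{\infty}(Y)\subset L^{\infty}(Z)$) and with $1\otimes U_\alpha$ (from the global $\mathrm{Aut}$-invariance of $L^{\infty}(Z)$), and then observes that ergodicity of $\mathrm{Aut}$ on $(Y,\nu)$ forces the commutant of $\{m_f\}\cup\{U_\alpha\}$ in $B(L^2(Y))$ to be $\mathbb{C}1$, so $\mathbb{E}_{\mathcal{C}}\in B(L^2(X))\bar{\otimes}\mathbb{C}1$, i.e.\ $\mathbb{E}_{\mathcal{C}}=P\otimes 1$. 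The advantage of the paper's route is that it sidesteps entirely the Effros--Mar\'echal / selection-theorem machinery you flag as the main obstacle: the only nontrivial ingredient is the commutant computation $(\{m_f\}\cup\{U_\alpha\})'=\mathbb{C}1$. Your route is more transparent dynamically (it literally exhibits the fiber algebras), but it costs you the measurability setup.

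One small remark: you invoke ``countability of $G$'' to get $g\cdot\mathcal{A}_y=\mathcal{A}_{gy}$ a.e., but $G$ is not assumed countable. This is harmless: that step can be dropped. Once the $\mathrm{Aut}$-ergodicity gives $\mathcal{A}_y=\mathcal{A}$ a.e., the identity $L^{\infty}(Z)=\mathcal{A}\,\bar{\otimes}\,L^{\infty}(Y)$ is $G$-invariant on the nose, and comparing with $g\cdot(\mathcal{A}\,\bar{\otimes}\,L^{\infty}(Y))=(g\cdot\mathcal{A})\,\bar{\otimes}\,L^{\infty}(Y)$ yields $g\cdot\mathcal{A}=\mathcal{A}$ for each $g$ without any null-set bookkeeping.
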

\begin{proof}
The proof of \cite[Lemma 4.1]{ben-pinsker} 
works verbatim in our setting. We include it for completeness. We identity $\mathcal{B}_Z$ as a Borel $\sigma$-subalgebra of $\mathcal{B}_X$. Write $\mathcal{C}=L^{\infty}(Z,\zeta)$. Consider the conditional expectation $\mathbb{E}_{\mathcal{C}}$ as a projection operator
\begin{align*}
\mathbb{E}_{\mathcal{C}}: L^2(X\times Y,\mu\times \nu)\rightarrow L^2(X\times Y, \mu\times \nu).
\end{align*}
For $\alpha\in Aut(G\curvearrowright (Y,\nu))$, define a unitary operator $U_{\alpha}: L^2(Y,\nu)\rightarrow L^2(Y,\nu)$ by $U_{\alpha}\xi=\xi\circ \alpha^{-1}$. Recall that if $E\subseteq B(\mathcal{H})$ for some Hilbert space $\mathcal{H}$, then \[E'=\{T\in B(\mathcal{H}): TS=ST~\text{for some}~S\in E\}.\]
As shown in the proof of  \cite[Lemma 4.1]{ben-pinsker}, we have the following fact. 

(\textbf{Fact}) If $T\in B(L^2(Y,\nu))$ and $T\in (\{m_f: f\in L^{\infty}(Y,\nu)\}\cup \{U_{\alpha}: \alpha \in Aut(G\curvearrowright (Y,\nu))\})'$, then $T\in \mathbb{C}1$.

Next, we claim that 
\begin{align}\label{claim: claim 1 in ergodicity and invariant lemma}
\mathbb{E}_{\mathcal{C}}\in (\{1\otimes m_f: f\in L^{\infty}(Y,\nu)\}\cup \{1\otimes U_{\alpha}: \alpha\in Aut(G\curvearrowright (Y,\nu))\})'.
\end{align}
Indeed, let us check that for any $f\in L^{\infty}(Y,\nu)$ and $\alpha\in Aut(G\curvearrowright (Y,\nu))$, we have 
\begin{align*}
\mathbb{E}_{\mathcal{C}}\circ (1\otimes m_f)=(1\otimes m_f)\circ \mathbb{E}_{\mathcal{C}},~~~~
\mathbb{E}_{\mathcal{C}}\circ (1\otimes U_{\alpha})=(1\otimes U_{\alpha})\circ \mathbb{E}_{\mathcal{C}}.
\end{align*}
Take any $\xi\otimes \eta\in L^2(X\times Y,\mu\times \nu)\cong L^2(X,\mu)\bar{\otimes}L^2(Y,\nu)$, we aim to show that
\begin{align*}
\mathbb{E}_{\mathcal{C}}(\xi\otimes f\eta)=(1\otimes m_f)\circ \mathbb{E}_{\mathcal{C}}(\xi\otimes \eta),\\
\mathbb{E}_{\mathcal{C}}(\xi\otimes U_{\alpha}\eta)=(1\otimes U_{\alpha})\circ \mathbb{E}_{\mathcal{C}}(\xi\otimes \eta).
\end{align*} 
By definition of $\mathbb{E}_{\mathcal{C}}$, the above is equivalent to showing that for any $\rho\in L^2(Z,\zeta)$, we have
\begin{align*}
\langle \xi\otimes f\eta-(1\otimes m_f)\circ \mathbb{E}_{\mathcal{C}}(\xi\otimes \eta), \rho \rangle=0,\\
\langle \xi\otimes U_{\alpha}\eta-(1\otimes U_{\alpha})\circ \mathbb{E}_{\mathcal{C}}(\xi\otimes \eta), \rho \rangle=0.
\end{align*}
Let us check them one by one.
\begin{align*}
\langle (1\otimes m_f)\circ \mathbb{E}_{\mathcal{C}}(\xi\otimes \eta), \rho\rangle\
&=\langle \mathbb{E}_{\mathcal{C}}(\xi\otimes \eta), (1\otimes m_f)^*\rho\rangle\\
&=\langle \mathbb{E}_{\mathcal{C}}(\xi\otimes \eta), (1\otimes m_{f^*})\rho\rangle\\
&=\langle \xi\otimes \eta, \mathbb{E}_{\mathcal{C}}((1\otimes m_{f^*})\rho)\rangle\\
&=\langle \xi\otimes\eta, (1\otimes m_{f^*})\rho\rangle~~~(\text{since $(1\otimes m_f^*)\in \mathcal{C}$ and $\rho\in L^2(Z,\zeta)$})\\
&=\langle (1\otimes m_{f^*})^*(\xi\otimes \eta), \rho\rangle\\
&=\langle (1\otimes m_f)(\xi\otimes \eta),\rho\rangle\\
&=\langle \xi\otimes f\eta, \rho\rangle.
\end{align*}
This finishes the proof of first identity. For the second one, we have
\begin{align*}
\langle (1\otimes U_{\alpha})\circ \mathbb{E}_{\mathcal{C}}(\xi\otimes \eta), \rho\rangle
&=\langle \mathbb{E}_{\mathcal{C}}(\xi\otimes \eta), (1\otimes U_{\alpha})^*\rho\rangle\\
&=\langle \xi\otimes \eta, \mathbb{E}_{\mathcal{C}}((1\otimes U_{\alpha}^*)\rho)\rangle\\
&=\langle \xi\otimes \eta, (1\otimes U_{\alpha})^*\rho\rangle~~~(\text{since $\alpha$ fixes $\mathcal{C}$ globally by assumption})\\
&=\langle (1\otimes U_{\alpha})(\xi\otimes \eta), \rho\rangle\\
&=\langle \xi \otimes U_{\alpha}\eta, \rho\rangle.
\end{align*}
This finishes the proof of the second identity. The rest proof is identical to the proof \cite[Lemma 4.1]{ben-pinsker}. Let 
$\mathcal{Aut}=Aut(G\curvearrowright (Y, \nu)).$
By \cite[Theorem IV. 5.9]{ben-pinsker}, the above implies that $\mathbb{E}_{\mathcal{C}}$ belongs to the following set:
\begin{align*}
&\overline{\text{span}(\{T\otimes S: T\in B(L^2(X,\mu)), S\in (\{m_f: f\in L^{\infty}(Y,\nu)\}\cup \{U_{\alpha}: \alpha\in \mathcal{Aut}'\})}^{SOT}\\
&=B(L^2(X,\mu))\otimes \mathbb{C}1.
\end{align*}
The last equality follows from the above mentioned \textbf{Fact}. So $\mathbb{E}_{\mathcal{C}}=P\otimes 1$ for some projection $P\in B(L^2(X,\mu))$. Let $\mathcal{D}=\{U\in \mathcal{B}_X: U\times Y\in\mathcal{B}_Z\}$. We show that $P(L^2(X,\mathcal{B}_X, \mu))=L^2(X,\mathcal{D},\mu)$ from whence the proof will be done. It is clear that $P(L^2(X,\mathcal{B}_X,\mu))\supseteq L^2(X,\mathcal{D},\mu)$, so it is enough to show that $P(L^2(X,\mathcal{B}_X,\mu))\subseteq L^2(X,\mathcal{D},\mu)$. To show this, it is enough to show that if $\xi\in P(L^2(X,\mathcal{B}_X,\mu))$ and $A\subseteq \mathbb{C}$ is Borel, then $\xi^{-1}(A)\in\mathcal{D}$. Fix such a $\xi$, $A$. Since $\mathbb{E}_{\mathcal{C}}=P\otimes 1$, and $P(\xi)=\xi$, we know that $\xi\otimes 1$ is $\mathcal{B}_Z$-measurable, because $\mathcal{B}_Z$ is complete. So $1_A\circ (\xi\otimes 1)$ is $\mathcal{B}_Z$-measurable. Since $1_{\xi^{-1}(A)\times Y}=1_A\circ (\xi\otimes 1)$, it follows that $\xi^{-1}(A)\times Y$ is $\mathcal{B}_Z$-measurable, so $\xi^{-1}(A)\in\mathcal{D}$.
\end{proof}

\begin{remark}
Clearly, $L^{\infty}(Z,\zeta)$ is globally invariant under $Aut(G\curvearrowright (Y,\nu))$ is a necessary condition for $G\curvearrowright (Z,\zeta)$ to split as a product.
\end{remark}

\begin{remark}
A concrete example with $Aut(G\curvearrowright (Y,\nu))$ acting on $(Y,\nu)$ ergodically is given by the diagonal action $G\curvearrowright Y:=K^G$ defined using some pmp action $G\curvearrowright K$. Indeed, since the shift action lies in $Aut(G\curvearrowright (Y, \nu))$ and hence acts on $(Y,\nu)$ ergodically. Similar to \cite[Lemma 4.2]{ben-pinsker}, given any $G\curvearrowright (Y,\nu)$, we may view it as a $G$-factor of the diagonal action $G\curvearrowright (Y^G,\nu^G)$, which satisfies the ergodicity assumption.
\end{remark}

\begin{remark}
Note that if we have some pmp ergodic action $H\curvearrowright (Y,\nu)$ such that
\begin{itemize}
    \item Either $H$ is ICC or the action is also essentially free.
    \item This action preserves $L^{\infty}(Z,\zeta)$ globally.
\end{itemize}
Then $G\curvearrowright (Z,\zeta)$ splits as a product.
Indeed, the first bullet guarantees that $L^{\infty}(Y)\rtimes H$ is a factor. Due to the second bullet, we may consider the following inclusion:
$L^{\infty}(Y)\rtimes H\subseteq L^{\infty}(Z)\rtimes H\subseteq L^{\infty}(X)\bar{\otimes}(L^{\infty}(Y)\rtimes H)$. By Ge-Kadison's splitting theorem \cite{ge1996tensor}, we deduce that $L^{\infty}(Z)\rtimes H=L^{\infty}(W)\bar{\otimes}[L^{\infty}(Y)\rtimes H]$ for some $G$-factor $G\curvearrowright W$ of $G\curvearrowright X$. This shows that $Z$ splits as $W\times Y$. The subtle step in applications is verifying the global preservation
of $L^\infty(Z)$ by $H$ (the “position” of $L^\infty(Z)$ inside $L^\infty(X\times Y)$ is
generally opaque). This hypothesis is automatic, for instance, if $L^\infty(Z)$ is invariant
under a large symmetry group of $(Y,\nu)$ (e.g.\ $\Aut(G\curvearrowright Y)$ in the theorem),
or if $L^\infty(Z)$ is normalized by a dense subgroup of such automorphisms.
\end{remark}

\section{Complete Generalization of Bader-Shalom}
\label{sec:genNCIFT}
Before presenting the proof for the noncommutative intermediate factor theorem (NC-IFT), let us recall the definition of enveloping von Neumann algebras: Given a separable C$^*$-algebra $A$, let 
$$\pi_U=\bigoplus_{\varphi\in\mathrm{State}(A)}\pi_\varphi: A\to B\left(\bigoplus_{\varphi\in\mathrm{State}(A)} L^2(A,\varphi)\right)=B(H_U)$$
be the \textbf{universal representation} of $A$. Then the \textbf{enveloping von Neumann algebra} of $A$ is defined to be $A^{**}:=\pi_U(A)''$. In particular, any state $\varphi$ on $A$ can be uniquely extended to a normal state on $A^{**}$. Moreover, if $A$ admits a $G$-action for a lcsc group $G$, then the $G$-action can be naturally extended to $A^{**}$. For more details regarding the enveloping von Neumann algebras, we refer to \cite[Subsection 1.4]{BObook} and \cite[Section III.2]{takesaki}.

Our main result is the following. 
\begin{thm}\label{thm:NC IFT}
Assume that $(G, \mu) = (G_1 \times G_2, \mu_1 \times \mu_2)$, where $(G_i,\mu_i)$ is a lcsc group with an admissible probability measure $(i=1,2)$.
Let $(B,\nu_B)$ be the $(G,\mu)$-Poisson boundary and $(\cN,\tau_\cN)$ be a trace preserving $G$-von Neumann algebra with separable predual. Suppose $(\cN, \tau_\cN)$ is $G_i$-ergodic for each $i=1,2$.  Then for any $G$-invariant intermediate von Neumann algebra $\cM$ with
\[
\cN \subset \cM \subset \cN\bar{\otimes}L^\infty(B, \nu_B),
\]
the algebra $\cM$ splits, that is,
$$\cM=\cN\bar{\otimes} L^\infty(C,\nu_C)$$
for some $(G,\mu)$-boundary $(C,\nu_C)$.
\end{thm}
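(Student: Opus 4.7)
The plan is to reduce the theorem to the classical Bader--Shalom Intermediate Factor Theorem by passing to the center of $\cN$, and then bootstrap back to the full $\cN$ using Dixmier's averaging property. First, note that $\tau := \tau_\cN \otimes \nu_B$ is a faithful normal trace on $\cN \bar\otimes L^\infty(B)$ (since $L^\infty(B)$ is abelian) and is $\mu$-stationary. The center $Z(\cN)$ is $G$-invariant, and the hypothesis $\cN^{G_i} = \C$ forces $Z(\cN)^{G_i} \subset \cN^{G_i} = \C$, so $Z(\cN)$ is itself $G_i$-ergodic. The commutative intermediate algebra
\[
Z(\cN) \;\subset\; \cM \cap \bigl(Z(\cN) \bar\otimes L^\infty(B)\bigr) \;\subset\; Z(\cN) \bar\otimes L^\infty(B)
\]
then fits the hypotheses of the classical Bader--Shalom IFT \cite{BS06}, which produces a $(G,\mu)$-boundary $C$, realised as a $G$-factor of $B$, such that
\[
\cM \cap \bigl(Z(\cN) \bar\otimes L^\infty(B)\bigr) \;=\; Z(\cN) \bar\otimes L^\infty(C).
\]
In particular $L^\infty(C) \subset \cM$; combined with $\cN \subset \cM$ and the fact that $\cN$ and $L^\infty(C)$ commute inside $\cN \bar\otimes L^\infty(B)$, this gives the easy inclusion $\cN \bar\otimes L^\infty(C) \subset \cM$.

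For the reverse inclusion I would invoke Dixmier's averaging theorem. Let $\mathbb{E}_Z := \mathbb{E}_{Z(\cN)}^{\cN}$ be the trace-preserving conditional expectation onto the center. The Dixmier property applied to the inclusion $\cN \otimes 1 \subset \cN \bar\otimes L^\infty(B)$ expresses $(\mathbb{E}_Z \otimes \id)(x)$ as the unique element of $Z(\cN) \bar\otimes L^\infty(B)$ lying in the weak-operator closure of the convex hull of $\{(u \otimes 1)\, x\, (u^* \otimes 1) : u \in U(\cN)\}$. For $x \in \cM$ and $u \in U(\cN) \subset \cN \subset \cM$, each conjugate $(u \otimes 1) x (u^* \otimes 1)$ lies in $\cM$, so by weak-operator closedness of $\cM$,
\[
(\mathbb{E}_Z \otimes \id)(\cM) \;\subset\; \cM \cap \bigl(Z(\cN) \bar\otimes L^\infty(B)\bigr) \;=\; Z(\cN) \bar\otimes L^\infty(C).
\]

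The final step is a slicing argument. Factor $\cP := \tau_\cN \otimes \id : \cN \bar\otimes L^\infty(B) \to L^\infty(B)$ as $\cP = (\tau|_{Z(\cN)} \otimes \id) \circ (\mathbb{E}_Z \otimes \id)$; the previous display then gives $\cP(\cM) \subset L^\infty(C)$. For arbitrary $a \in \cM$ and $n \in \cN$, the element $(n \otimes 1) a$ lies in $\cM$, so with $\omega_n(x) := \tau_\cN(nx)$ we get
\[
(\omega_n \otimes \id)(a) \;=\; \cP\bigl((n \otimes 1) a\bigr) \;\in\; L^\infty(C).
\]
Because $\{\omega_n : n \in \cN\}$ is norm-dense in $\cN_* \cong L^1(\cN, \tau_\cN)$ and $\omega \mapsto (\omega \otimes \id)(a)$ is norm-continuous from $\cN_*$ to $L^\infty(B)$, this extends to $(\omega \otimes \id)(a) \in L^\infty(C)$ for every $\omega \in \cN_*$. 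Applied to $b := a - (\id_\cN \otimes \mathbb{E}_{L^\infty(C)}^{L^\infty(B)})(a)$, one obtains $(\omega \otimes \id)(b) = 0$ for every $\omega \in \cN_*$, hence $b = 0$ since normal functionals separate $\cN \bar\otimes L^\infty(B)$. Thus $a \in \cN \bar\otimes L^\infty(C)$ for every $a \in \cM$, completing $\cM \subset \cN \bar\otimes L^\infty(C)$ and the proof. The main subtle point is the reduction to Bader--Shalom on the center: one must verify the commutative intermediate $G$-factor hypothesis, for which the $G_i$-ergodicity of $\cN$ (transmitted to $Z(\cN)$) is essential, and one must check that Dixmier-style averaging over $U(\cN)$ genuinely lands inside $\cM$ rather than merely in the ambient tensor product.
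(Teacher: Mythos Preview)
Your proof is correct and takes a genuinely different route from the paper's. The paper develops a noncommutative Radon--Nikodym factor $\cM_\RN$ (Definition~\ref{def:rnfactor}, Theorem~\ref{thm:keyingredient}), shows $\cM_\RN \subset L^\infty(B,\nu_B)$ using the metric ergodicity of the Poisson boundaries $(B_i,\nu_{B_i})$ together with the product structure, and then---passing through the enveloping algebra $C(B)^{**}$ to handle non-normal states in the boundary disintegration---builds a $G$-equivariant state-preserving $*$-homomorphism $\Phi:\cM\to\cN\bar\otimes L^\infty(C)$ which is shown to be the identity. Your approach instead treats the classical IFT as a black box: intersect $\cM$ with $Z(\cN)\bar\otimes L^\infty(B)$, apply \cite[Theorem~1.9]{BS06} to the abelian base $Z(\cN)$ (whose $G_i$-ergodicity is inherited from $\cN$), and bootstrap back to $\cN$ via Dixmier averaging over $U(\cN)$ followed by a slice-map argument. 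The Dixmier step is sound because $\cN\bar\otimes L^\infty(B)$ is a finite von Neumann algebra with trace $\tau_\cN\otimes\nu_B$: the unique element of minimal $\|\cdot\|_2$-norm in $\overline{\mathrm{co}}\{(u\otimes1)x(u^*\otimes1):u\in U(\cN)\}$ lies in the relative commutant $Z(\cN)\bar\otimes L^\infty(B)$, equals $(\mathbb{E}_Z\otimes\id)(x)$, and remains in $\cM$ since $\cM$ is $\|\cdot\|_2$-closed on norm-bounded sets. Your argument is shorter, more elementary, and avoids enveloping algebras entirely; the paper's approach, by contrast, is self-contained (it does not invoke \cite{BS06} as a black box) and produces the $G$-Radon--Nikodym factor together with its equivariant conditional expectation as tools of independent interest.
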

Before embarking on the proof, let us outline our proof strategy, which follows the footsteps of \cite[Theorem~1.9]{BS06}. We look at the $G$-Radon-Nikodym factors of each subalgebra. This plays the role akin to the \say{invariants product functor $F^G$} used in \cite{BS06}. As such, we show that for every intermediate algebra $\cM$, its corresponding $G$-Radon-Nikodym factor is a subalgebra of $L^{\infty}(B,\nu_B)$. It should not come as a surprise then, that the $G$-Radon-Nikodym factor of the ambient tensor product is $L^{\infty}(B,\nu_B)$ itself, and that of the tracial von Neumann algebra $\cN$ is the set of scalars. Once this is achieved, the last piece of the puzzle is to show that the intermediate algebra is indeed its $G$-Radon-Nikodym factor tensored with the underlying tracial von Neumann algebra $\cN$. This is illustrated in the following diagram.
\begin{center}
\begin{tikzcd}[column sep=huge, row sep=large]
\mathcal{N}\otimes1
  \arrow[r, hook]
  \arrow[d, "\mathbb{E}_{\mathrm{RN}}^{\cN}"']
& \mathcal{M}
  \arrow[r, hook]
  \arrow[d, "\mathbb{E}_{\mathrm{RN}}^{\cM}"']
& \mathcal{N}\,\bar\otimes\,L^\infty(B,\nu_B)
  \arrow[d, "\mathbb{E}^{\mathcal{N}\bar\otimes L^\infty(B,\nu_B)}_{\mathrm{RN}}"']
\\
\mathbb{C}
  \arrow[r, hook]
& \cM_\RN
  \arrow[r, hook]
& L^\infty(B,\nu_B)
\end{tikzcd}
\end{center}

\begin{proof}[Proof of Theorem~\ref{thm:NC IFT}]
This proof is in the same spirit as Bader-Shalom's original proof of the IFT theorem \cite[Theorem 1.9]{BS06}. Denote by $\tau$ the trace $\tau_\cN\otimes\nu_B|_\cM$ on $\cM$. Let $\cM_\RN$ be the $G$-Radon-Nikodym factor of $(\cM,\tau)$. We will show $\cM_\RN\subset L^\infty(B,\nu_B)$ first.

 Let $(B_i,\nu_{B_i})$ be the $(G_i,\mu_i)$-Poisson boundary $(i=1,2)$. Then $(B,\nu_B)=(B_1\times B_2,\nu_{B_1}\otimes\nu_{B_2})$. For $g\in G$, let $h_g=(D(\tau\circ g):D\tau)\in L^1(\cM,\tau)$. Clearly, since $\tau_\cN\otimes\nu_B=\tau_\cN\otimes(\nu_{B_1}\otimes\nu_{B_2})$ is $(G_i,\mu_i)$-stationary for each $i=1,2$, so is $\tau$. For any $g_1\in G_1$, since $G_1$ commutes with $G_2$, $\tau\circ g_1$ is still $(G_2,\mu_2)$-stationary. Hence by Lemma \ref{lem:h G-inv}, $h_{g_1}=(D(\tau\circ g_1):D\tau)$ is $G_2$-invariant and $h_{g_1}^{it}\in\cM^{G_2}\subset(\cN\bar{\otimes}L^\infty(B, \nu_B))^{G_2}$. We also have
$$\cN\bar{\otimes}L^\infty(B, \nu_B)=(\cN\bar{\otimes}L^\infty (B_1,\nu_{B_1}))\bar{\otimes} L^\infty(B_2, \nu_{B_2}).$$
Note that the $G_2$-action on $(\cN,\tau_\cN)\bar{\otimes}L^\infty (B_1,\nu_{B_1})$ is  trace preserving. Since the Poisson boundary $(B_2,\nu_2)$ is $G_2$-metrically ergodic, by Lemma \ref{lem:metric ergodic}, we have
$$(\cN\bar{\otimes}L^\infty(B, \nu_B))^{G_2}=(\cN\bar{\otimes}L^\infty (B_1,\nu_{B_1}))^{G_2}\otimes1=L^\infty(B_1,\cN)^{G_2}\otimes 1.$$
Since $G_2$ acts trivially on $B_1$ and ergodically on $\cN$, we have
$$L^\infty(B_1,\cN)^{G_2}=L^\infty(B_1,\cN^{G_2})=L^\infty(B_1,\C)=L^\infty(B_1,\nu_{B_1}).$$
Therefore, we have $\langle h_{g_1}^{it}\rangle_{g_1\in G_1,t\in\R}\subset L^\infty(B_1,\nu_{B_1})\subset L^\infty(B,\nu_B)$. For the same reason, we also have $\langle h_{g_2}^{it}\rangle_{g_2\in G_2,t\in\R}\subset L^\infty(B_2,\nu_{B_2})\subset L^\infty(B,\nu_B)$.

For any $g_1g_2\in G$ $(g_i\in G_i, i=1,2)$, by the Chain Rule \cite[Theorem VIII.3.7]{takesaki} and \cite[Formula (33) in the proof of Corollary VIII.3.6]{takesaki}, we have
$$h_{g_1g_2}^{it}=(D(\tau\circ g_1g_2):D(\tau\circ g_2))^{it}\cdot(D(\tau\circ g_2):D\tau)^{it}=g_2^{-1}(h_{g_1}^{it})\cdot h_{g_2}^{it}=h_{g_1}^{it}\cdot h_{g_2}^{it}\in L^\infty(B,\nu_B).$$
By the arbitrariness of $g_1g_2$, we have $\cM_\RN\subset L^\infty(B,\nu_B)$. Hence there exists a $(G,\mu)$-boundary $(C,\nu_C)$ with $(\cM_\RN,\tau)=L^\infty(C,\nu_C)$.

Endow $(B,\nu_B)$ with a compact metrizable model. Let $C(B)^{**}$ be the enveloping von Neumann algebra of $C(B)$. Then $\nu_B$ can be viewed as a normal state on $C(B)^{**}$ and $L^\infty(B,\nu_B)=p_{\nu_B}C(B)^{**}p_{\nu_B}$, where $p_{\nu_B}$ is the support projection of $\nu_B$. Note that since $\nu_B\in\Prob(B)$ is $G$-quasi-invariant, we have that $p_{\nu_B}$ is $G$-invariant. Take a $G$-invariant intermediate von Neumann algebra $\cN\subset\hat{\cM}\subset \cN\bar{\otimes} C(B)^{**}$ with $\hat{\cM}=\cM\oplus (\cN\otimes p_{\nu_B}^\perp)$. Then $\hat{\cM}$ is weakly separable since both $\cM$ and $\cN$ are, and $p_{\nu_B}\hat{\cM}p_{\nu_B}=\cM$ . Let $\hat{\tau}=\tau_\cN\otimes\nu_B|_{\hat{\cM}}$, which is a $\mu$-stationary trace on $\hat{\cM}$. Then $(\cM,\tau)$ is exactly the GNS construction of $(\hat{\cM},\hat{\tau})$. We denote this quotient $\ast$-homomorphism by $\Phi_\cM:(\hat{\cM},\hat{\tau})\to(\cM,\tau):x\mapsto p_{\nu_B}xp_{\nu_B}$. Since $p_{\nu_B}$ is $G$-invariant, we know that $\Phi_M$ is $G$-equivariant.

By Theorem~ \ref{thm:keyingredient}, there exists a trace preserving $G$-equivariant conditional expectation $\mathbb{E}_\RN:(\cM,\tau)\to L^\infty(C,\nu_C)$. Then $$\cP:=\mathbb{E}_\RN\circ\Phi_\cM:(\hat{\cM},\hat{\tau})\to L^\infty(C,\nu_C)\subset L^\infty(B,\nu_B)$$ is a trace preserving $G$-equivariant ucp map. Since $\hat{\tau}=\tau_\cN\otimes\nu_B|_{\hat{\cM}}$, the $G$-equivariant measurable map $\beta:B\to\mathrm{State}(\hat{\cM}):(b\mapsto\tau_\cN\otimes\delta_b|_{\hat{\cM}})$ satisfies $\int_B\beta_b\d \nu_B(b)=\hat{\tau}$. Hence we can define another trace preserving $G$-equivariant ucp map
$$\cP_0 : (\hat{\cM},\hat{\tau})\to L^\infty(B,\nu_B):x\mapsto (b\mapsto\beta_b(x)).$$
Since $\hat{\tau}=\nu_B\circ \cP=\nu_B\circ\cP_0$, by the bijection between $\mu$-stationary states and ucp maps to $L^\infty(B,\nu_B)$ (Lemma \ref{lem:bijection stationary states and ucp maps}), we must have $\cP=\cP_0$. 

Let $\pi_C:(B,\Sigma_B,\nu_B)\to(C,\Sigma_C,\nu_C)$ be the factor map and denote by $\overline{\Sigma}_C\subset\overline{\Sigma}_B$ the completed $\sigma$-algebras with respect to $\nu_B$ and $\nu_C$. Let us show that $\beta$ is measurable with respect to $\overline{\Sigma}_C$. Since $\tau_\cN\otimes\delta_b$ is normal on $\cN\bar{\otimes} C(B)^{**}$, we know that $\beta$ actually takes values in $\mathrm{State_n}(\hat{\cM})$, the set of normal states on $\hat{\cM}$. Since $\hat{\cM}=\cM\oplus (\cN\otimes p_{\nu_B}^\perp)$ is weakly separable, we can take a countable weakly dense subset $(x_n)\subset \hat{\cM}$. Then the $\sigma$-algebra of $\mathrm{State_n}(\hat{\cM})$ can be countably generated by $\{\varphi\in\mathrm{State_n}(\hat{\cM})\mid \mathrm{Re}\,\varphi(x_n)>a\}$ $(n\in \N, a\in\Q)$.

For any $n\in\N$ and $a\in\Q$, since $$(b\mapsto\beta_b(x_n))=\cP_0(x_n)=\cP(x_n)\in L^\infty(C,\nu_C),$$
we have $$\beta^{-1}(\{\varphi\in\mathrm{State_n}(\hat{\cM})\mid \mathrm{Re}\,\varphi(x_n)>a\})=\{b\in B\mid \mathrm{Re}\,\beta_b(x_n)>a\}\in\overline{\Sigma}_C.$$
By the arbitrariness of $x_n$ and $a$, we know that $\beta$ is measurable with respect to $\overline{\Sigma}_C$. Hence $\beta: B\to \mathrm{State_n}(\hat{\cM})$ can be passed to a $G$-equivariant measurable map $\eta:C\to \mathrm{State_n}(\hat{\cM})$ with $\int_C\eta_c\d \nu_C(c)=\hat{\tau}$ and $\beta=\eta\circ\pi_C$ $\nu_B$-a.e.

Therefore, for $\nu_C$-a.e. $c\in C$, there exists $b\in\pi_C^{-1}(c)$ such that $\eta_c=\tau_\cN\otimes\delta_b|_{\hat{\cM}}$. Hence we have
$$(\cN,\tau_\cN)\subset(\hat{\cM},\eta_c)\subset(\cN\bar{\otimes} C(B)^{**},\tau_\cN\otimes\delta_b).$$
Clearly, the GNS construction of $(\cN\bar{\otimes} C(B)^{**},\tau_\cN\otimes\delta_b)$ is exactly isomorphic to $(\cN,\tau_\cN)$. Hence so is the GNS construction of $(\hat{\cM},\hat{\tau})$. Denote this quotient $\ast$-homomorphism by $\Phi_c:(\hat{\cM},\eta_c)\to (\cN,\tau_\cN)$. Then we have $\Phi_c|_\cN=\id_\cN$. Also note that since $\eta:C\to\mathrm{State_n}(\hat{\cM})$ is $G$-equivariant, so is the map $C\ni c\mapsto \Phi_c\in \mathrm{Hom}(\hat{\cM},\cN)$. Here $G$ acts on $\mathrm{Hom}(\hat{\cM},\cN)$ by $g(\Phi_0)=g\circ\Phi_0\circ g^{-1}$.

Define $\hat{\Phi}:\hat{\cM}\to \cN\bar{\otimes} L^\infty(C,\nu_C)=L^\infty(C,\cN)$ by
$$\hat{\Phi}(x)(c)=\Phi_c(x)\ (x\in\hat{\cM},c\in C).$$
Since $\nu_C$-a.e. $\Phi_c$ is a $\ast$-homomorphism, so is $\hat{\Phi}$. Since $c\mapsto\Phi_c$ is $G$-equivariant, we have that for any $g\in G$, $x\in \hat{\cM}$ and $\nu_C$-a.e. $c\in C$,
$$\hat{\Phi}(gx)(c)=\Phi_c(gx)=g \circ(g^{-1}\circ\Phi_c\circ g)(x)=g \circ\Phi_{g^{-1}c}(x)=g(\hat{\Phi}(x)(g^{-1}c)).$$ 
Hence $\hat{\Phi}$ is $G$-equivariant (note that $G$ acts on $L^\infty(B,\cN)$ by $g(f)=g\circ f\circ g^{-1}$). We also have that for $x\in\hat{\cM}$,
$$\tau_\cN\otimes\nu_C(\hat{\Phi}(x))=\int_C \tau_\cN\circ\Phi_c(x)\d\nu_C(c)=\int_C \eta_c(x)\d\nu_C(c)=\hat{\tau}(x).$$
Hence $\hat{\Phi}:(\hat{\cM},\hat{\tau})\to(\cN,\tau_\cN)\bar{\otimes} L^\infty(C,\nu_C)$ is trace preserving. Therefore, $\hat{\Phi}$ can be passed to the GNS construction of $(\hat{\cM},\hat{\tau})$, i.e., $(\cM,\tau)$. Let $\Phi:(\cM, \tau)\to(\cN,\tau_\cN)\bar{\otimes} L^\infty(C,\nu_C)$ be $\Phi(\Phi_\cM(x))=\hat{\Phi}(x)$ for $x\in\hat{M}$. Then $\Phi$ is a well-defined trace preserving $G$-equivariant $\ast$-homomorphism. Since $\tau$ is faithful on $\cM$, we know that $\Phi$ is injective. Since $\Phi_c|_\cN=\id_\cN$, we have $\Phi|_{\cN\otimes1}=\id_{\cN\otimes 1}$. 

Let $\tau_\cN\otimes \id: (\cN,\tau_\cN)\bar{\otimes} L^\infty(C,\nu_C)\to L^\infty(C,\nu_C)$ be the trace preserving $G$-equivariant conditional expectation onto $L^\infty(C,\nu_C)$. Then 
$$\Psi:=(\tau_\cN\otimes \id)\circ\Phi|_{L^\infty(C)}:L^\infty(C,\nu_C)\to L^\infty(C,\nu_C)\subset L^\infty(B,\nu_B)$$ 
is a trace preserving $G$-ucp map. By Lemma \ref{lem:bijection stationary states and ucp maps}, since $\Psi$ preserves $\nu_C$, we must have $\Psi=\id_{L^\infty(C)}$. Hence by \cite[Lemma 2.2]{zhou2024noncommutative}, $\Phi(L^\infty(C,\nu_C))$ is contained in the multiplicative domain of $\tau_\cN\otimes \id$, i.e., $1\otimes L^\infty(C,\nu_C)$, and $\Phi|_{L^\infty(C)}=(\tau_\cN\otimes \id)\circ\Phi|_{L^\infty(C)}=\id_{L^\infty(C)}$.

Now we know that restrictions of $\Phi$ on $\cN$ and $L^\infty(C,\nu_C)$ are both the identity. Hence $\Phi:\cM\to\cN\bar{\otimes} L^\infty(C,\nu_C)$ is an injective conditional expectation. Therefore, we must have $\Phi=\id_{\cN\bar{\otimes} L^\infty(C)}$ and $\cM=\cN\bar{\otimes} L^\infty(C,\nu_C)$, which finishes the proof.
\end{proof}

\section{Ideals in Tensor product of von Neumann algebras}
\label{sec:obstruction}
Up to now, we have established \say{product–rigidity} results under
ergodicity on one leg. In this section, we investigate the
opposite end of the spectrum. In particular,  we show that the presence of a non–trivial (WOT–closed) ideal in the
tensor product may produce canonical intermediate subalgebras that fail to split.
\begin{lemma}
\label{closedinv}
Let $N\subseteq M$ be von Neumann algebras. Assume that $I\lhd M$ is a wot-closed (two-sided) ideal in $M$. Then $P:=N+I$ is a von Neumann subalgebra in $M$.
\end{lemma}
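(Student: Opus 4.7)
The plan is to invoke the standard structure theorem for WOT-closed two-sided ideals of a von Neumann algebra, which gives $I = zM = Mz$ for a unique central projection $z \in \mathcal{Z}(M)$; in particular $I$ is self-adjoint. From this, the verification that $P = N + I$ is a unital $*$-subalgebra is routine: $1 \in N \subseteq P$, the product $(n_1 + i_1)(n_2 + i_2) = n_1 n_2 + (n_1 i_2 + i_1 n_2 + i_1 i_2)$ lies in $N + I$ because $I$ is two-sided, and $(n+i)^* = n^* + i^* \in N + I$ by $I^* = I$. So all the content is in establishing that $P$ is WOT-closed.

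For WOT-closedness, I would pass to the quotient $\pi: M \to M/I$, which is a normal surjective $*$-homomorphism onto a von Neumann algebra (under the identification $M/I \cong (1-z)M$, the map $\pi$ is simply left multiplication by $1-z$). Since the image of a von Neumann algebra under a normal $*$-homomorphism is a von Neumann subalgebra, $\pi(N)$ is $\sigma$-weakly closed in $M/I$. Observing that
\[
\pi^{-1}(\pi(N)) = \{x \in M : x - n \in I \text{ for some } n \in N\} = N + I = P,
\]
and using that $\pi$ is $\sigma$-weakly continuous (being normal), we conclude that $P$ is $\sigma$-weakly closed. Since $P$ is convex, this is equivalent to $P$ being WOT-closed.

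The only subtlety, and hence the place where the argument could go wrong if one is careless, is topological: a normal $*$-homomorphism between von Neumann algebras is in general only $\sigma$-weakly continuous, not WOT-continuous, so WOT-closedness of $P$ does not follow directly from WOT-continuity of $\pi$. The remedy is the standard fact that for convex subspaces of a von Neumann algebra, the $\sigma$-weak and WOT closures coincide (by Hahn--Banach together with the identification of preduals). Beyond this point, the argument is purely algebraic and offers no resistance; an alternative, more hands-on implementation of the same idea would be to write $P = (1-z)N \oplus zM$ inside the central decomposition $M = (1-z)M \oplus zM$ and observe that each summand is visibly WOT-closed and that left multiplication by $z$ and $1-z$ is WOT-continuous.
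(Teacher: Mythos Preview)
Your proof is correct and follows essentially the same route as the paper: both exploit the structure theorem $I=pM$ for a central projection $p$ and then the decomposition $M=(1-p)M\oplus pM$ to identify $P$ with $(1-p)N\oplus pM$. The only cosmetic difference is that the paper works directly with nets and WOT-continuity of multiplication by $1-p$, while your primary write-up passes through the $\sigma$-weak topology via the normal quotient map and then invokes convexity to return to WOT; your ``alternative'' at the end is exactly the paper's argument.
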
 
\begin{proof}
We just need to show $P$ is closed under wot.

Let $x_n+y_n\in P$ be a net converging to $z\in M$ under wot, where $x_n\in N$ and $y_n\in I$ for all $n\geq 1$. We aim to show $z\in N+I$.

Recall that since $I$ is an ideal closed under wot, we may find some central projection $p\in M$ such that $I=Mp$. Below, when we write $\lim$, it should be understood as limit under wot.

Now, $z(1-p)=\lim_n(x_n+y_n)(1-p)=\lim_nx_n(1-p)\in N(1-p)$ since $x_n\in N$ and $N(1-p)$ is a von Neumann subalgebra in $Mp$ and hence closed under wot.
So we may write $z(1-p)=\lim_nx_n(1-p)=n(1-p)$ for some $n\in N$.

Therefore, we have 
\begin{align*}
    z=z(1-p)+zp=n(1-p)+zp=n+(z-n)p\in N+Mp=N+I.
\end{align*}
\end{proof}
Let $(\mathcal{M},\tau_{\mathcal{M}})$ and $(\mathcal{N},\tau_{\mathcal{N}})$ be trace-preserving $G$-von Neumann algebras. Let $\mathcal{P}=\mathcal{M}\bar{\otimes}\mathcal{N}$, and $\tau=\tau_{\mathcal{M}}\otimes\tau_{\mathcal{N}}$. In this case, we have a $G$-equivariant $\tau$-preserving canonical conditional expectations $\mathbb{E}_{\mathcal{M}}:\mathcal{P}\to\mathcal{M}$ defined by $\mathbb{E}_{\mathcal{M}}(x)=(\text{id}\otimes\tau_N)(x)$. Similarly, $\mathbb{E}_{\mathcal{N}}(x)=(\tau_M\otimes\text{id})(x)$ is the canonical $\tau$-preserving conditional expectation onto $\mathcal{N}$.
\begin{proposition}
\label{objectassociatedwithanideal}
Let $(\mathcal{M},\tau_{\mathcal{M}})$ and $(\mathcal{N},\tau_{\mathcal{N}})$ be trace-preserving $G$-von Neumann algebras. Let $\mathcal{P}=\mathcal{M}\bar{\otimes}\mathcal{N}$. Assume that $\mathcal{M}$ and $\mathcal{N}$ do not admit non-trivial $G$-invariant WOT-closed two-sided ideals. Let $I\triangleleft \mathcal{P}$ be a $G$-invariant two-sided closed non-trivial ideal, hence, is of the form $q\mathcal{P}$ for some $G$-invariant projection $q\in\mathcal{Z}(\mathcal{P})$. Let\[\mathcal{V}_{I,~\mathcal{N}}=\mathcal{N}+q\mathcal{P}\text{ and }\mathcal{V}_{I,~\mathcal{M}}=\mathcal{M}+q\mathcal{P}\]
Then the following hold true:
\begin{enumerate}
    \item\label{intmobject} $\mathcal{V}_{I,~\mathcal{N}}$ and  $\mathcal{V}_{I,~\mathcal{M}}$ are $G$-invariant von Neumann subalgebras of $\mathcal{P}$ containing $\mathcal{N}$ and $\mathcal{M}$ respectively.
    \item\label{spliteverything} If $\mathcal{V}_{I,~\mathcal{N}}$ splits, then $\mathcal{V}_{I,~\mathcal{N}}=\mathcal{P}$. Similarly, if $\mathcal{V}_{I,~\mathcal{M}}$ splits, it must be equal to $\mathcal{P}$. 
\end{enumerate}
Moreover, if $\mathcal{V}_{I,~\mathcal{N}}$ splits, then $p=1-q$ is separating for both $\mathcal{M}$ and $\mathcal{N}$ if we further assume that the actions $G\curvearrowright \mathcal{M}$ and $G\curvearrowright\mathcal{N}$ are ergodic. Moreover, in this case, there is a trace-preserving injective $*$-homomorphism $\phi: \mathcal{M}\to\mathcal{N}$.
\begin{proof}
$(\ref{intmobject})$ follows from Proposition~\ref{closedinv}. We also observe that $\overline{\mathbb{E}_{\mathcal{M}}(q\mathcal{P})}^{\text{WOT}}$ is a two-sided $G$-invariant WOT-closed in $\mathcal{M}$. Since $\mathcal{M}$ does not have any non-trivial WOT-closed two-sided ideals, it must be the case that  $\overline{\mathbb{E}_{\mathcal{M}}(q\mathcal{P})}^{\text{WOT}}$ is either $\mathcal{M}$ or $0$. Since $\mathbb{E}_{\mathcal{M}}$ is faithful, we obtain that $\overline{\mathbb{E}_{\mathcal{M}}(q\mathcal{P})}^{\text{WOT}}=\mathcal{M}$. A similar argument shows that $\overline{\mathbb{E}_{\mathcal{N}}(q\mathcal{P})}^{\text{WOT}}=\mathcal{N}$. This shows that $$1\otimes\mathcal{N}\subsetneq\mathcal{V}_{I,~\mathcal{N}}\subseteq\mathcal{P}.$$ Similarly, we can conclude that $$\mathcal{M}\otimes 1\subsetneq\mathcal{V}_{I,~\mathcal{M}}\subseteq\mathcal{P}.$$
Let us now assume that $\mathcal{V}_{I,~\mathcal{N}}$ splits, i.e., $\mathcal{V}_{I,~\mathcal{N}}=\mathcal{M}_0\otimes\mathcal{N}$ for some $G$-invariant von Neumann subalgebra $\mathcal{M}_0\subset\mathcal{M}$. Then, $\mathbb{E}_{\mathcal{M}}(\mathcal{V}_{I,~\mathcal{N}})\subset\mathcal{V}_{I,~\mathcal{N}}$. Consequently, we see that
\[\mathcal{M}=\overline{\mathbb{E}_{\mathcal{M}}(q\mathcal{P})}^{\text{WOT}}\subset\overline{\mathbb{E}_{\mathcal{M}}(\mathcal{V}_{I,~\mathcal{N}})}^{\text{WOT}}\subset\mathcal{V}_{I,~\mathcal{N}}\]
This shows that $\mathcal{V}_{I,~\mathcal{N}}=\mathcal{P}$. A similar argument shows that $\mathcal{V}_{I,~\mathcal{M}}=\mathcal{P}$ under the assumption that $\mathcal{V}_{I,~\mathcal{M}}$ splits. 

Let us now assume that $\mathcal{V}_{I,\mathcal{N}}$ splits. Then, $\mathcal{N}+q\mathcal{P}=\mathcal{P}$. Multiplying by $1-q$, we obtain that $(1-q)\mathcal{N}=(1-q)\mathcal{P}$. Since $(1-q)\mathcal{N}=(1-q)\mathcal{P}$, we see that $(1-q)\mathcal{M}\subset(1-q)\mathcal{N}$. Let $p=(1-q)$. We claim that $p$ is separating for both $\mathcal{M}$ and $\mathcal{N}$. Indeed, if $px=0$ for some $x\in\mathcal{M}$, applying the canonical conditional expectation $\mathbb{E}_{\mathcal{M}}$ on both sides, we see that $\mathbb{E}_{\mathcal{M}}(p)x=0$. Since $\mathbb{E}_{\mathcal{M}}(p)$ is a $G$-invariant element in $\mathcal{M}$ and $G\curvearrowright (\mathcal{M},\tau_{\mathcal{M}})$ is ergodic, it must be the case that $\mathbb{E}_{\mathcal{M}}(p)\in\mathbb{C}$. However if $\mathbb{E}_{\mathcal{M}}(p)=0$, then we would have that $p=0$ from whence we would have that $q=1$ which contradicts the assumption that $I$ is non-trivial. Therefore, $\mathbb{E}_{\mathcal{M}}(p)\in\mathbb{C}\setminus\{0\}$. Consequently, it follows that $x=0$. A similar argument shows that if $py=0$ for some $y\in\mathcal{N}$, then $y=0$.

We can now define a $G$-equivariant $*$-homomorphism $\phi:\mathcal{M}\to\mathcal{N}$, where $\phi(x)$ is determined by $xp=\phi(x)p$ using the above proved fact that $p\mathcal{M}\subset p\mathcal{N}$. Note that this is a $*$-homomorphism since for $x,y\in\mathcal{M}$,
\[\phi(xy)p=xyp=(xp)(yp)=\phi(x)p\phi(y)p=\phi(x)\phi(y)p\]
Here we have used that $p$ is separating for $\mathcal{N}$. Moreover, $\phi$ is injective since $p$ is separating for $\mathcal{M}$. We now show that $\phi$ is trace-preserving in the sense that $\tau_{\mathcal{N}}(\phi(x))=\tau_{\mathcal{M}}(x)$ for all $x\in\mathcal{M}$.
Since both $\mathbb{E}_{\mathcal{M}}(p)$ and $\mathbb{E}_{\mathcal{N}}(p)$ are constants. Moreover,
\[\mathbb{E}_{\mathcal{M}}(p)=\tau\left(\mathbb{E}_{\mathcal{M}}(p)\right)=\tau(p)=\tau\left(\mathbb{E}_{\mathcal{N}}(p)\right)=\mathbb{E}_{\mathcal{N}}(p)\]
Now, for an element $m\in\mathcal{M}$, we see that
\[\tau(mp)=\tau\left(\mathbb{E}_{\mathcal{M}}(mp)\right)=\mathbb{E}_{\mathcal{M}}(p)\tau(m)=\tau(p)\tau(m).\]
Similarly,
\[\tau(\phi(m)p)=\tau\left(\mathbb{E}_{\mathcal{N}}(\phi(m)p)\right)=\mathbb{E}_{\mathcal{N}}(p)\tau(\phi(m))=\tau(p)\tau(\phi(m)).\]
Since $mp=\phi(m)p$, we see that $\tau(p)\tau(m)=\tau(p)\tau(\phi(m))$ for all $m\in\mathcal{M}$. Since $\tau(p)\ne 0$, it follows that $\tau(m)=\tau(\phi(m))$ for all $m\in\mathcal{M}$. This in turn implies that for all $m\in\mathcal{M}$,
\[\tau_{\mathcal{M}}(m)=\tau\left(\mathbb{E}_{\mathcal{M}}(m)\right)=\tau(m)=\tau(\phi(m))=\tau\left(\mathbb{E}_{\mathcal{N}}(\phi(m))\right)=\tau_{\mathcal{N}}(\phi(m)).\]
\end{proof}
\end{proposition}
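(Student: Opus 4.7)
The plan is to deduce all four claims from the structure of the slice maps $\mathbb{E}_{\mathcal{M}}$ and $\mathbb{E}_{\mathcal{N}}$ on $\mathcal{P} = \mathcal{M}\bar{\otimes}\mathcal{N}$, combined with Lemma~\ref{closedinv}. For part (\ref{intmobject}), the fact that $\mathcal{V}_{I,\mathcal{N}}$ and $\mathcal{V}_{I,\mathcal{M}}$ are von Neumann subalgebras of $\mathcal{P}$ is immediate from Lemma~\ref{closedinv} applied to the WOT-closed two-sided ideal $I=q\mathcal{P}$, and $G$-invariance is automatic since $\mathcal{M}$ and $\mathcal{N}$ are $G$-invariant tensor factors while $q\in\mathcal{Z}(\mathcal{P})$ is $G$-invariant. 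I would also record at this stage the central observation that $\overline{\mathbb{E}_{\mathcal{M}}(q\mathcal{P})}^{\mathrm{WOT}}=\mathcal{M}$ and $\overline{\mathbb{E}_{\mathcal{N}}(q\mathcal{P})}^{\mathrm{WOT}}=\mathcal{N}$: since $\mathbb{E}_{\mathcal{M}}$ is a $G$-equivariant $\mathcal{M}$-bimodule map and $q\mathcal{P}$ is a two-sided $G$-invariant ideal, the image $\mathbb{E}_{\mathcal{M}}(q\mathcal{P})$ is a $G$-invariant two-sided ideal of $\mathcal{M}$, nonzero by faithfulness of $\mathbb{E}_{\mathcal{M}}$ on $q\neq 0$, and the no-non-trivial-$G$-invariant-ideal hypothesis forces its WOT-closure to be all of $\mathcal{M}$. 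The same reasoning applies to $\mathcal{N}$.

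For part (\ref{spliteverything}), suppose $\mathcal{V}_{I,\mathcal{N}}=\mathcal{M}_0\bar{\otimes}\mathcal{N}$ for some $G$-invariant subalgebra $\mathcal{M}_0\subseteq\mathcal{M}$. Slicing by $\mathbb{E}_{\mathcal{M}}$ recovers $\mathcal{M}_0$, and since $\mathcal{V}_{I,\mathcal{N}}\supseteq q\mathcal{P}$ we get $\mathcal{M}_0\supseteq\overline{\mathbb{E}_{\mathcal{M}}(q\mathcal{P})}^{\mathrm{WOT}}=\mathcal{M}$. This forces $\mathcal{M}_0=\mathcal{M}$, so $\mathcal{V}_{I,\mathcal{N}}=\mathcal{P}$; the statement for $\mathcal{V}_{I,\mathcal{M}}$ is symmetric.

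For the moreover clause, write $p=1-q$. From $\mathcal{N}+q\mathcal{P}=\mathcal{P}$, multiplying by $p$ and using $pq=0$ yields $p\mathcal{N}=p\mathcal{P}$, so in particular $p\mathcal{M}\subseteq p\mathcal{N}$. To show $p$ is separating for $\mathcal{M}$, I would assume $pm=0$ for some $m\in\mathcal{M}$ and apply the bimodule map $\mathbb{E}_{\mathcal{M}}$, obtaining $\mathbb{E}_{\mathcal{M}}(p)m=0$. Ergodicity of $G\curvearrowright\mathcal{M}$ forces $\mathbb{E}_{\mathcal{M}}(p)$ to be scalar, namely $\tau(p)$, which is nonzero because $I$ is non-trivial; hence $m=0$, and the argument for $\mathcal{N}$ is identical. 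Given that $p$ is separating for $\mathcal{N}$ and $p\mathcal{M}\subseteq p\mathcal{N}$, for each $m\in\mathcal{M}$ there is a unique $\phi(m)\in\mathcal{N}$ with $\phi(m)p=mp$; the separating property makes $\phi$ a well-defined, injective $*$-homomorphism. Trace preservation follows by computing $\tau(mp)$ and $\tau(\phi(m)p)$ through $\mathbb{E}_{\mathcal{M}}$ and $\mathbb{E}_{\mathcal{N}}$ respectively, using that $\mathbb{E}_{\mathcal{M}}(p)=\mathbb{E}_{\mathcal{N}}(p)=\tau(p)$, to obtain $\tau(p)\tau(m)=\tau(p)\tau(\phi(m))$, whence $\tau_{\mathcal{M}}(m)=\tau_{\mathcal{N}}(\phi(m))$.

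The main obstacle I anticipate is the observation in part (\ref{intmobject}) that $\overline{\mathbb{E}_{\mathcal{M}}(q\mathcal{P})}^{\mathrm{WOT}}$ and its counterpart for $\mathcal{N}$ are genuine $G$-invariant WOT-closed \emph{two-sided} ideals; this uses both the $\mathcal{M}$-bimodule structure of $\mathbb{E}_{\mathcal{M}}$ and the two-sidedness of $q\mathcal{P}=\mathcal{P}q$ (via centrality of $q$), and is precisely the point where the no-non-trivial-$G$-invariant-ideal hypothesis on each factor is invoked. Once this is in place, all subsequent steps are essentially bookkeeping through the slice maps and the ergodicity hypothesis.
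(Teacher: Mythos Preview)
Your proposal is correct and follows essentially the same approach as the paper's proof: both invoke Lemma~\ref{closedinv} for part~(\ref{intmobject}), use the identity $\overline{\mathbb{E}_{\mathcal{M}}(q\mathcal{P})}^{\mathrm{WOT}}=\mathcal{M}$ (via the no-non-trivial-$G$-invariant-ideal hypothesis and faithfulness) to force $\mathcal{M}_0=\mathcal{M}$ in part~(\ref{spliteverything}), and carry out the same separating-projection and slice-map bookkeeping for the moreover clause. The only cosmetic difference is that in part~(\ref{spliteverything}) the paper phrases the conclusion as $\mathcal{M}\subset\mathcal{V}_{I,\mathcal{N}}$ while you phrase it as $\mathcal{M}_0\supseteq\mathcal{M}$, but these are the same observation.
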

Consequently, if one leg is abelian and the other is genuinely
non–commutative, such a $\phi$ cannot exist; hence the corresponding intermediate does
not split. We make this precise below.
\begin{thm}
\label{thm:gensplitidealvna}
Let $(\mathcal{M},\tau_{\mathcal{M}})$ and $(\mathcal{N},\tau_{\mathcal{N}})$ be trace-preserving $G$-von Neumann algebras. Let $\mathcal{P}=\mathcal{M}\bar{\otimes}\mathcal{N}$. Assume that $G\curvearrowright(\mathcal{M},\tau_{\mathcal{M}})$ and $G\curvearrowright(\mathcal{N},\tau_{\mathcal{N}})$ are ergodic. Let $I\triangleleft \mathcal{P}$ be a $G$-invariant two-sided closed non-trivial ideal, hence, is of the form $q\mathcal{P}$ for some $G$-invariant projection $q\in\mathcal{Z}(\mathcal{P})$. Let\[\mathcal{V}_{I,~\mathcal{M}}=\mathcal{M}+q\mathcal{P}\]    
Moreover, assume that $\left(\mathcal{M},\tau_{\mathcal{M}}\right)$ is abelian and is of the form $L^{\infty}(X,\nu)$ and $(\mathcal{N},\tau_{\mathcal{N}})$ is non-commutative. Then, $\mathcal{V}_{I,~\mathcal{M}}$ does not split. 
\begin{proof}
Note that the ergodicity assumption implies that  $\mathcal{M}$ and $\mathcal{N}$ do not admit non-trivial $G$-invariant WOT-closed two-sided ideals. 
Let us assume that $\mathcal{V}_{I,~\mathcal{M}}$ splits. 
It follows from Proposition~\ref{objectassociatedwithanideal} that there is a trace preserving injective $*$-homorphism $\phi:(\mathcal{N},\tau_{\mathcal{N}})\to  L^{\infty}(X,\nu) $. This is a contradiction to the assumption that $(\mathcal{N},\tau_{\mathcal{N}})$ is non-commutative. 
\end{proof}
\end{thm}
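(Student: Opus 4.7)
The plan is to reduce this to Proposition~\ref{objectassociatedwithanideal} applied with the roles of $\cM$ and $\cN$ interchanged, and then exploit the fact that no non-trivial non-commutative von Neumann algebra embeds into an abelian one. First I will verify the hypothesis of Proposition~\ref{objectassociatedwithanideal}, namely that $\cM$ and $\cN$ individually admit no non-trivial $G$-invariant WOT-closed two-sided ideals. Any such ideal has the form $p\mathcal{A}$ for a central projection $p\in \mathcal{Z}(\mathcal{A})$, and $G$-invariance of the ideal forces $p$ to be a $G$-invariant central projection; ergodicity of $G \curvearrowright (\cM,\tau_\cM)$ and of $G \curvearrowright (\cN,\tau_\cN)$ then forces $p \in \{0,1\}$, ruling out the non-trivial case.

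Next I will invoke Proposition~\ref{objectassociatedwithanideal} in its mirrored form. As written, that proposition treats $\mathcal{V}_{I,\cN}=\cN + q\cP$ and, under splitting, produces a trace-preserving injective $*$-homomorphism $\cM \to \cN$. However, the entire argument there is symmetric in the two tensor factors: it uses only the canonical conditional expectations $\mathbb{E}_\cM$ and $\mathbb{E}_\cN$ in interchangeable roles, together with Lemma~\ref{closedinv}, which itself is symmetric. Hence the mirror statement applies: assuming $\mathcal{V}_{I,\cM} = \cM + q\cP$ splits, the projection $p = 1 - q$ is separating for both $\cM$ and $\cN$, and the relation $yp = \psi(y)p$ for $y \in \cN$ defines a trace-preserving injective $*$-homomorphism $\psi: \cN \to \cM$.

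To conclude, I will argue by contradiction. Suppose $\mathcal{V}_{I,\cM}$ splits; the mirrored proposition yields an injective $*$-homomorphism $\psi: \cN \to \cM = L^\infty(X,\nu)$. Since $L^\infty(X,\nu)$ is abelian, the image $\psi(\cN)$ is abelian, and injectivity of $\psi$ then forces $\cN$ itself to be abelian, contradicting the hypothesis that $\cN$ is non-commutative. The only potentially subtle point is to confirm that the proof of Proposition~\ref{objectassociatedwithanideal} is genuinely symmetric under interchange of the two tensor factors; this is the ``main obstacle,'' though a brief inspection shows nothing in the argument breaks the symmetry, so the mirrored version goes through verbatim.
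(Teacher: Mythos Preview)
Your proposal is correct and follows essentially the same approach as the paper: verify via ergodicity that neither factor has non-trivial $G$-invariant ideals, invoke (the symmetric form of) Proposition~\ref{objectassociatedwithanideal} to obtain an injective $*$-homomorphism $\cN\to L^\infty(X,\nu)$, and derive a contradiction with non-commutativity of $\cN$. Your explicit remark that one must use the mirrored version of the proposition (since it is stated for $\mathcal{V}_{I,\cN}$ rather than $\mathcal{V}_{I,\cM}$) is a point the paper leaves implicit.
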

Below, we prove the measurable version of the above proposition.

\begin{proposition}
\label{prop:abelianvnanonsplitting}
Let $\Gamma\curvearrowright (X,\nu)$ and $\Gamma\curvearrowright (Y,\eta)$ be two  ergodic pmp actions on atomless standard probability spaces. Assume that the diagonal action $\Gamma\curvearrowright (X\times Y,\nu\times \eta)$ is not ergodic, say $Z\subseteq X\times Y$ is a $G$-invariant Borel subset with $0<(\nu\times\eta)(Z)<1$. Set $q=\chi_Z\in L^{\infty}(X\times Y,\nu\times\eta)$ be the characteristic function on $Z$.  Set $\mathcal{Q}=L^{\infty}(Y,\eta)+L^{\infty}(X\times Y,\nu\times\eta)q$. Then $\mathcal{Q}$ is a $\Gamma$-invariant von Neumann subalgebra between $L^{\infty}(Y,\eta)$ and $L^{\infty}(X\times Y,\nu\times\eta)$ which does not split as $L^{\infty}(X'\times Y,\nu'\times\eta)$ for any $G$-factor map $\pi: (X,\nu)\rightarrow (X',\nu')$.  
\end{proposition}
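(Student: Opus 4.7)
The plan is to recognize $\mathcal{Q}$ as the intermediate algebra $\mathcal{V}_{I,\mathcal{N}}$ constructed in Proposition~\ref{objectassociatedwithanideal} applied with $\mathcal{M}=L^\infty(X,\nu)$, $\mathcal{N}=L^\infty(Y,\eta)$ and $q=\chi_Z\in\mathcal{Z}(\mathcal{P})$, then use part~(\ref{spliteverything}) of that proposition to convert any splitting of $\mathcal{Q}$ into the equality $\mathcal{Q}=\mathcal{P}$, and finally reach a contradiction using the atomlessness of $(X,\nu)$.

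\medskip

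To start, I would check the hypotheses of Proposition~\ref{objectassociatedwithanideal}: ergodicity of each factor action forces the only $\Gamma$-invariant WOT-closed two-sided ideals of $\mathcal{M}$ and $\mathcal{N}$ to be trivial (in the abelian setting such ideals correspond bijectively to $\Gamma$-invariant measurable subsets, which are null or conull by ergodicity). Since $\mathcal{P}$ is abelian, $q=\chi_Z$ lies in $\mathcal{Z}(\mathcal{P})=\mathcal{P}$, is $\Gamma$-invariant by assumption on $Z$, and the ideal $I=q\mathcal{P}$ is non-trivial because $0<(\nu\times\eta)(Z)<1$. Lemma~\ref{closedinv} then gives that $\mathcal{Q}=\mathcal{N}+q\mathcal{P}=\mathcal{V}_{I,\mathcal{N}}$ is a $\Gamma$-invariant von Neumann subalgebra of $\mathcal{P}$ containing $L^\infty(Y,\eta)$.

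\medskip

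Now suppose, for contradiction, that $\mathcal{Q}=L^\infty(X'\times Y,\nu'\times\eta)=L^\infty(X',\nu')\bar\otimes L^\infty(Y,\eta)$ for some $\Gamma$-factor $\pi\colon (X,\nu)\to (X',\nu')$. Setting $\mathcal{M}_0:=L^\infty(X',\nu')\subset\mathcal{M}$ realises this as a splitting of $\mathcal{V}_{I,\mathcal{N}}$ in the sense of Proposition~\ref{objectassociatedwithanideal}(\ref{spliteverything}), which forces $\mathcal{Q}=\mathcal{P}$. The equality $\mathcal{P}=L^\infty(Y,\eta)+q\mathcal{P}$, multiplied by $1-q=\chi_{Z^c}$, gives $(1-q)\mathcal{P}=(1-q)L^\infty(Y,\eta)$. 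Identifying $(1-q)\mathcal{P}$ with $L^\infty(Z^c,(\nu\times\eta)|_{Z^c})$, this asserts that the Borel $\sigma$-algebra of $Z^c$ is (mod null sets) generated by the projection $\pi_Y\colon Z^c\to Y$. Disintegrating $(\nu\times\eta)|_{Z^c}$ along $\pi_Y$ then forces the conditional probability measures on the fibres $Z^c_y=\{x:(x,y)\in Z^c\}$ to be point masses for $\eta$-a.e.\ $y$ with $\nu(Z^c_y)>0$. Atomlessness of $\nu$ rules this out, so $\nu(Z^c_y)=0$ for $\eta$-a.e.\ $y$, and Fubini yields $(\nu\times\eta)(Z^c)=0$, contradicting $(\nu\times\eta)(Z)<1$.

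\medskip

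The main obstacle I anticipate is the last step, where one translates the algebraic collapse $(1-q)\mathcal{P}=(1-q)L^\infty(Y,\eta)$ into the fibrewise geometric conclusion via disintegration: this is routine but requires a careful handling of null sets and of the essential image of $\pi_Y|_{Z^c}$. This argument should be regarded as the commutative analogue of Theorem~\ref{thm:gensplitidealvna}: the role played there by the non-commutativity of $\mathcal{N}$ is played here by the atomlessness of $(X,\nu)$.
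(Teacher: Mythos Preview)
Your proof is correct and follows essentially the same route as the paper. The paper packages the endgame slightly differently: it invokes the ``Moreover'' clause of Proposition~\ref{objectassociatedwithanideal} to produce a factor map $\pi:(Y,\eta)\to(X,\nu)$ and observes that $Z^c\subseteq\{(\pi(y),y):y\in Y\}$, which has $(\nu\times\eta)$-measure zero since $\nu$ is atomless, whereas you reach the same contradiction directly by disintegrating $(\nu\times\eta)|_{Z^c}$ over $\pi_Y$.
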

\begin{proof}
Assume that $\mathcal{Q}$ splits.
Using Theorem~\ref{thm:gensplitidealvna}, we get a measure-preserving $\Gamma$-equivariant injective *-homomorphism $\phi: L^{\infty}(X,\nu)\rightarrow L^{\infty}(Y,\eta)$. Hence it corresponds to a $\Gamma$-factor map $\pi: (Y,\eta)\rightarrow (X,\nu)$, i.e., $\phi(a)(y)=a(\pi(y))$.

Finally, let us check that $Z^c\subseteq \{(\pi(y),y):~y\in Y\}$.

Indeed, note that now the identity $ap=\phi(a)p$ as in the proof of Proposition \ref{objectassociatedwithanideal} reads as \[a(x)\chi_{Z^c}(x, y)=\phi(a)(y)\chi_{Z^c}(x, y)=a(\pi(y))\chi_{Z^c}(x, y)\] for a.e. $(x, y)\in X\times Y$ and all $a\in L^{\infty}(X)$. 

Hence, for a.e. $(x, y)\in Z^c$,  we get $a(x)=a(\pi(y))$ for all $a\in L^{\infty}(X)$, therefore, $x=\pi(y)$.

From what we have proved, we deduce that 
$c=(\nu\times \eta)(Z^c)\leq (\nu\times \eta)(\{(\pi(y), y): y\in Y\})=0$ since $\nu$ is atomless, a contradiction.
\end{proof}
\section{On the \texorpdfstring{$SL_2(\mathbb{Z})$}{}-actions}
\label{sec:SL_2(Z)}
In this section, we focus on the  \say{$\infty$–entropy} Bernoulli leg
\(G\curvearrowright (Y,\nu)=(\mathbb T^{G},\mu_0^{G})\)
and the structured toral action
\(G\curvearrowright (X,\mu)=(\mathbb T^2,\mu)\) for \(G=SL_2(\mathbb Z)\),
and we classify all intermediate factors between
\(G\curvearrowright (Y,\nu)\times (X,\mu)\) and \(G\curvearrowright (Y,\nu)\).
In particular, we show that every such intermediate is forced to be a
product \( (Y,\nu)\times (Z,\zeta) \), with \( (Z,\zeta) \) a factor of \( (X,\mu) \).

\begin{thm}
\label{thm:toricaction}
Let $G=SL_2(\mathbb{Z})\curvearrowright (Y,\nu)=(\mathbb{T}^{G},\mu_0^G)$ be the Bernoulli shift and let $G\curvearrowright (X,\mu)=(\mathbb{T}^2, \mu)$ be the standard action with $\mu$ being the Haar measure. Let $G\curvearrowright (Q,\eta)$ be an intermediate factor between $G\curvearrowright (Y,\nu)\times (X,\mu)$ and $G\curvearrowright (Y,\nu)$. Then there exists a factor map $G\curvearrowright (X,\mu)\rightarrow (Z,\zeta)$ such that $G\curvearrowright (Q,\eta)$ is measurably conjugate to $G\curvearrowright (Y,\nu)\times (Z,\zeta)$. 
\end{thm}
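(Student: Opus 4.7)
The plan is to invoke the Glasner--Weiss Master Theorem (Proposition~\ref{prop:GSMaster}) to encode $Q$ as a $G$-invariant joining of $(Y,\nu)$ with a measure on $\Prob(\T^2)$, and then to exploit the algebraic rigidity of the $SL_2(\Z)$-action on $\T^2$ together with the Bernoulli randomness of $(Y,\nu)$ to force this joining to be a product. Since $X=\T^2$ is a compact metric $G$-space, in Proposition~\ref{prop:GSMaster} one may take $\mathcal{A}=C(\T^2)$ so that $S(\mathcal{A})=\Prob(\T^2)$, producing a $G$-equivariant injective measurable map
\[J:Q\to Y\times\Prob(\T^2),\qquad q\mapsto(\theta(q),\varphi(q)),\]
where $\theta:Q\to Y$ is the given factor map and $J_*\eta$ is a $G$-invariant joining with marginals $\nu$ and $\xi:=\varphi_*\eta$. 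Using the identity $\varphi(q)(a)=\mathbb{E}_Q(a)(q)$ for $a\in C(\T^2)$, one checks that the barycenter of $\xi$ equals the Haar measure $\mu$ on $\T^2$.

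Next, I would classify the possible targets. Because $SL_2(\Z)$ acts irreducibly on $\R^2$, no one-dimensional subtorus of $\T^2$ is $G$-invariant, so the $G$-invariant closed subgroups of $\T^2$ are exactly $\{0\}$, $\T^2$, and the finite torsion subgroups $H_n=\tfrac{1}{n}\Z^2/\Z^2$ for $n\geq 1$. Correspondingly, the $G$-factors of $(X,\mu)$ form a countable list $Z_n=\T^2/H_n$ (together with the trivial factor). The central claim is that for $\eta$-a.e.\ $q$, $\varphi(q)$ is a translate of the normalized Haar measure on one of these $H_n$; granting this, the subgroup $H_n$ is $G$-invariantly assigned to $q$ and therefore constant on the ergodic system $(Q,\eta)$, say $n=n_0$. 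Then $\varphi$ descends to a $G$-equivariant measurable map $\psi:Q\to Z_{n_0}$, the pair $(\theta,\psi):Q\to Y\times Z_{n_0}$ is $G$-equivariant, and injectivity of $J$ combined with the marginal identities forces $(\theta,\psi)_*\eta=\nu\times\zeta$ with $\zeta$ the image of $\mu$ in $Z_{n_0}$, yielding the desired splitting.

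The main obstacle is the central claim: $\xi$-a.e.\ $\lambda\in\Prob(\T^2)$ is the Haar measure on a coset of some $G$-invariant closed subgroup. My approach is to study the $G$-equivariant system of Fourier coefficients $F_n(q):=\widehat{\varphi(q)}(n)\in L^\infty(Q,\eta)$ for $n\in\Z^2$, which transform via the dual $SL_2(\Z)$-action on $\Z^2$. For each nontrivial orbit $\mathcal{O}\subset\Z^2\setminus\{0\}$, the closed linear span of $\{F_n:n\in\mathcal{O}\}$ in $L^2(Q,\eta)$ is a $G$-equivariant subspace, and the core task is to show that it is pulled back from $L^2(X,\mu)$ via the canonical embedding $L^2(X,\mu)\hookrightarrow L^2(Q,\eta)$ induced by $\varphi$. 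The key nontrivial input is a relative disjointness statement for the Bernoulli leg $(Y,\nu)$ with respect to the algebraic toral action: the shift admits no $G$-equivariant $L^2$-function along the Fourier orbit $\mathcal{O}$ beyond those already arising from the finite-dimensional unitary subrepresentations of $SL_2(\Z)$ supported on $\mathcal{O}$ inside $L^2(\T^2,\mu)$. Once this spectral separation is in hand, the nonvanishing Fourier support of $\xi$-a.e.\ $\lambda$ is forced to be a $G$-invariant subset of $\Z^2$ of the form $n\Z^2$, pinning $\lambda$ to a coset Haar measure of $H_n$ and completing the argument.
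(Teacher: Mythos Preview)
Your approach differs substantially from the paper's and contains genuine gaps. The paper never touches Fourier analysis or the subgroup lattice of $\T^2$. Instead it restricts to the amenable subgroup $H=\begin{pmatrix}1&\Z\\0&1\end{pmatrix}\cong\Z$ and uses entropy theory: $H\curvearrowright(X,\mu)$ is distal, hence has zero entropy, so every quasi-factor of it has zero entropy; while $H\curvearrowright(Y,\nu)$ is Bernoulli, hence has completely positive entropy. Furstenberg disjointness then gives that $G\curvearrowright Y$ is disjoint from every ergodic quasi-factor of $G\curvearrowright X$, which is exactly the hypothesis of \cite[Theorem~1.1]{glasner2023intermediate}, yielding $Q\cong Y\times Z$ with $Z$ a quasi-factor of $X$. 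A second entropy argument (the Pinsker factor of $H\curvearrowright Y\times X$ is $X$) upgrades $Z$ from quasi-factor to genuine $G$-factor.

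Your argument has two concrete problems. First, the ``central claim'' that $\varphi(q)$ is almost surely Haar on a coset of some $H_n$ is not established: the sketched spectral argument conflates ``Fourier support contained in $n\Z^2$'' (which only says $\varphi(q)$ is $H_n$-invariant) with ``$\varphi(q)$ is Haar on a single coset of $H_n$'' (which requires $|\widehat{\varphi(q)}(k)|=1$ for all $k\in n\Z^2$). The ``relative disjointness statement'' you invoke to control the Fourier modules is exactly the hard input, and you do not prove it; in fact the clean way to get it is the entropy argument above. Second, even granting the central claim, your final step is wrong as stated: injectivity of $J$ together with correct marginals does \emph{not} force $(\theta,\psi)_*\eta=\nu\times\zeta$. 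That conclusion requires disjointness of $G\curvearrowright(Y,\nu)$ and $G\curvearrowright(Z,\zeta)$, which again is precisely what the paper's entropy argument supplies.
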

\begin{proof}
The proof consists of two steps.

Step 1. we show the conclusion holds but with $G\curvearrowright (Z,\zeta)$ only known to be a quasi-factor of $G\curvearrowright (X,\mu)$ for now.

To show this, it suffices to check the assumptions in \cite[Theorem 1.1]{glasner2023intermediate} holds in order to apply this theorem to our setting. In other words, we need to show $G\curvearrowright(Y,\nu)$ is disjoint from any ergodic (joining) quasi-factor of $G\curvearrowright (X,\mu)$.

Let $G\curvearrowright (Z,\zeta)$ be any such a quasi-factor. Let $\theta$ be a joining for $G\curvearrowright (Y,\nu)$ and $G\curvearrowright (Z,\zeta)$. Note that it is still a joining for the two subactions $H\curvearrowright (Y,\nu)$ and $H\curvearrowright (Z,\zeta)$, where $H=\begin{pmatrix}
1&\mathbb{Z}\\
0&1
\end{pmatrix}\leq G$ and $H\curvearrowright (Z,\zeta)$ is still a quasi-factor of $H\curvearrowright (X,\mu)$.

Then, we note that $H\curvearrowright (X,\mu)$ has zero entropy by \cite[Theorem 18.19]{Glasner-book} since it is distal and hence as a quasi-factor, $H\curvearrowright (Z,\zeta)$ also has zero entropy by \cite[Theorem 18.17]{Glasner-book}. Besides, $H\curvearrowright (Y,\nu)$ is still a Bernoulli shift and hence has completely positive entropy by \cite[Proposition 3.51 and Lemma 18.7(3)]{Glasner-book}, see the comment after \cite[Definition 18.8]{Glasner-book}. In fact it is known that Bernoulli shifts of any sofic groups have completely positive entropy by Kerr's result \cite{kerr}. Therefore, the two actions $H\curvearrowright (Y,\nu)$ and $H\curvearrowright (Z,\zeta)$ are disjoint by \cite{furstenberg}, from which we deduce that $\theta=\nu\times \zeta$, thus $G\curvearrowright (Y,\nu)$ and $G\curvearrowright (Z,\zeta)$ are disjoint.

Step 2. we show that the above quasi-factor $G\curvearrowright (Z,\zeta)$ is actually a factor of $G\curvearrowright (X,\mu)$.

Note that we have $G\curvearrowright (Z,\zeta)$ is a factor of $G\curvearrowright (Y,\nu)\times (X,\mu)$. Indeed, this is because we have the following successive factor maps:
\begin{align*}
    G\curvearrowright (Y,\nu)\times (X,\mu)\rightarrow (Q,\eta)\cong (Y,\nu)\times (Z,\zeta)\rightarrow (Z,\zeta).
\end{align*}
Once again, we note that by considering the $H$-subactions, we have
$H\curvearrowright (X,\mu)$ is the Pinsker factor of $H\curvearrowright (Y,\nu)\times (X,\mu)$. This relies on the following three facts:
\begin{itemize}
\item $H\curvearrowright (Y,\nu)$ is the Bernoulli shift and hence has completely positive entropy and therefore has trivial Pinsker factor.
\item $H\curvearrowright (X,\mu)$ has zero entropy and hence coincides with its Pinsker factor.
\item The Pinsker factor of a diagonal action is equal to the diagonal action on the product of two Pinsker factors \cite{gtw}. See also \cites{dan,ben-pinsker}.
\end{itemize}
Moreover, notice that $H\curvearrowright(Z,\zeta)$ has zero entropy as shown before, we therefore deduce that as a factor of the subaction $H\curvearrowright (Y,\nu)\times (X,\mu)$, we have that
$H\curvearrowright (Z,\zeta)$ is actually a factor of $H\curvearrowright (X,\mu)$. This means we can identify $L^{\infty}(Z,\zeta)$ as a von Neumann subalgebra of $L^{\infty}(X,\mu)$. Note that $L^{\infty}(Z,\zeta)$ is already $G$-invariant as we start with a $G$-action $G\curvearrowright (Z,\zeta)$. Hence, we deduce that $L^{\infty}(Z,\zeta)$ is a $G$-invariant von Neumann subalgebra in $L^{\infty}(X,\mu)$, which means we have a $G$-factor map $G\curvearrowright (X,\mu)\rightarrow (Z,\zeta)$. This finishes the proof.
\end{proof}
Initiated in \cite{haagerup1978example}, the Haagerup property has been highly fruitful for operator algebras-offering geometric insight, a weakening of amenability that retains workable techniques, and a robust antithesis to Property~(T). A systematic study of the notion of maximal Haagerup subalgebras was initiated in \cite{jiangskalski} and continued further in \cite{jiang2021maximalsolo}. Using Theorem~\ref{thm:toricaction}, we show that $L^{\infty}(Y,\nu)\rtimes G$ is a maximal Haagerup von Neumann subalgebra in $L^{\infty}((Y,\nu)\times (X,\mu))\rtimes G$, where $X$, $Y$, and $G$ are as in the theorem. In particular, this answers the second part of \cite[Problem 5.2]{jiangskalski}.
\begin{cor}
Let $G=SL_2(\mathbb{Z})\curvearrowright (Y,\nu)=(\mathbb{T}^{G},\mu_0^G)$ be the Bernoulli shift and let $G\curvearrowright (X,\mu)=(\mathbb{T}^2, \mu)$ be the standard action with $\mu$ being the Haar measure. Then $L^{\infty}(Y,\nu)\rtimes G$ is a maximal Haagerup von Neumann subalgebra in $L^{\infty}((Y,\nu)\times (X,\mu))\rtimes G$.
\end{cor}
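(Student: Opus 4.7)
The strategy is to verify the two conditions defining maximality: (i) that $\mathcal{A}:=L^\infty(Y,\nu)\rtimes G$ itself has the Haagerup property, and (ii) that every proper intermediate von Neumann subalgebra $\mathcal{A}\subsetneq\mathcal{N}\subseteq M:=L^\infty((Y,\nu)\times(X,\mu))\rtimes G$ fails to have it. For (i), $G=SL_2(\mathbb{Z})$ is virtually free and therefore has the Haagerup property, and since $G\curvearrowright(Y,\nu)$ is a free trace-preserving action, Jolissaint's theorem yields the Haagerup property for $\mathcal{A}$.

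For (ii), fix such an intermediate $\mathcal{N}$ and set $A:=L^\infty(Y\times X)$. The intersection $C:=\mathcal{N}\cap A$ is a $G$-invariant subalgebra with $L^\infty(Y)\subseteq C\subseteq A$, the $G$-invariance following from the fact that $L(G)\subset\mathcal{A}\subset\mathcal{N}$ acts on $\mathcal{N}$ by conjugation. The plan is to identify $\mathcal{N}=C\rtimes G$ via a Galois-type reduction; writing $C=L^\infty(Q,\eta)$ for the corresponding intermediate $G$-factor $(Y\times X)\to(Q,\eta)\to(Y,\nu)$, Theorem~\ref{thm:toricaction} then yields $(Q,\eta)\cong(Y,\nu)\times(Z,\zeta)$ for some $G$-factor $(Z,\zeta)$ of $(X,\mu)$, so $\mathcal{N}=L^\infty((Y,\nu)\times(Z,\zeta))\rtimes G$, and the strict containment $\mathcal{A}\subsetneq\mathcal{N}$ forces $(Z,\zeta)$ to be non-trivial.

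A Pontryagin-dual computation identifies the $G$-invariant subgroups of $\mathbb{Z}^2$ under $SL_2(\mathbb{Z})$ as precisely $\{0\}$ and the sublattices $n\mathbb{Z}^2$ for $n\geq 1$ (using that $G$-invariant closed subspaces of $L^2(\mathbb{T}^2)$ are spanned by $G$-orbits of characters, and that the only $G$-invariant subgroups of $\mathbb{Z}^2$ are the multiples of the standard lattice). Consequently every non-trivial $G$-factor of $(\mathbb{T}^2,\mu)$ is isomorphic as a $G$-space to the standard action $SL_2(\mathbb{Z})\curvearrowright(\mathbb{T}^2,\mu)$ itself via an $n$-fold multiplication map. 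Hence $L^\infty(Z,\zeta)\rtimes G\cong L(SL_2(\mathbb{Z})\ltimes\mathbb{Z}^2)$, and the classical Kazhdan relative property (T) of the pair $(SL_2(\mathbb{Z})\ltimes\mathbb{Z}^2,\mathbb{Z}^2)$ translates to relative property (T) for the inclusion $L^\infty(Z)\subseteq L^\infty(Z)\rtimes G$; since this sub-inclusion embeds tracially into $\mathcal{N}$, the relative property (T) propagates to $L^\infty(Z)\subseteq\mathcal{N}$. Diffuseness of $L^\infty(Z)$, together with the classical incompatibility between the Haagerup property and relative property (T) for diffuse inclusions (due to Popa), then forces $\mathcal{N}$ to fail Haagerup, contradicting the assumption.

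The principal technical hurdle is the Galois identification $\mathcal{N}=(\mathcal{N}\cap A)\rtimes G$, since $\mathcal{N}$ is not a priori preserved by the canonical conditional expectation $E_A\colon M\to A$. The plan is to establish this by showing that $\mathcal{N}$ is invariant under the dual coaction $\delta\colon M\to M\,\bar{\otimes}\,L(G)$, $\delta(a)=a\otimes 1$ for $a\in A$ and $\delta(\lambda_g)=\lambda_g\otimes\lambda_g$: the containment $L(G)\subset\mathcal{N}$ combined with the freeness of the diagonal action $G\curvearrowright Y\times X$ constrains the Fourier expansion $x=\sum_g a_g\lambda_g$ of any $x\in\mathcal{N}$ so that each Fourier coefficient $a_g$ is forced into $C$, from which Takesaki-type duality yields the desired crossed-product decomposition.
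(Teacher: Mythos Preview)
Your overall strategy matches the paper's: reduce an arbitrary intermediate subalgebra to a crossed product by an intermediate factor, apply Theorem~\ref{thm:toricaction} to split it, and then show the resulting crossed product fails Haagerup. The paper streamlines two of your steps by citation: the ``Galois identification'' $\cN=(\cN\cap A)\rtimes G$ is exactly Suzuki's theorem \cite{suzuki} on intermediate subalgebras of crossed products by essentially free actions (so your dual-coaction argument is a reinvention of that result), and the failure of Haagerup for $L^\infty(Z)\rtimes G$ is \cite[Lemma~3.5]{jiangskalski}.

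Your re-derivation of the second point, however, contains a genuine gap. The assertion that ``$G$-invariant closed subspaces of $L^2(\mathbb{T}^2)$ are spanned by $G$-orbits of characters'' is \emph{false}: the non-trivial $SL_2(\Z)$-orbits in $\Z^2$ are the sets $O_d=\{v:\gcd(v)=d\}$ for $d\geq 1$, and the map $v\mapsto dv$ gives a $G$-equivariant bijection $O_1\cong O_d$, so all the orbit representations $\ell^2(O_d)$ are mutually isomorphic. Hence $L^2(\mathbb{T}^2)\ominus\C$ contains infinitely many copies of the same $G$-representation, and there are many $G$-invariant subspaces (e.g.\ diagonals) not spanned by characters. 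The algebra structure may well force the conclusion you want, but your argument as written does not use it and therefore does not go through.

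Fortunately the classification is unnecessary. One can argue directly: the inclusion $L^\infty(\mathbb{T}^2)\subset L^\infty(\mathbb{T}^2)\rtimes G$ is rigid (relative property~(T)), hence so is $L^\infty(Z)\subset L^\infty(\mathbb{T}^2)\rtimes G$; since there is a trace-preserving conditional expectation $L^\infty(\mathbb{T}^2)\rtimes G\to L^\infty(Z)\rtimes G$, Popa's permanence results give rigidity of $L^\infty(Z)\subset L^\infty(Z)\rtimes G$. As $(Z,\zeta)$ is non-trivial and ergodic it is atomless, so $L^\infty(Z)$ is diffuse, and the incompatibility of Haagerup with a diffuse rigid subalgebra finishes the argument. (Also, your phrasing ``relative property~(T) propagates to $L^\infty(Z)\subset\cN$'' is the wrong direction to invoke; what you actually need is that Haagerup passes \emph{down} from $\cN$ to its subalgebra $L^\infty(Z)\rtimes G$, which then contradicts the rigidity just established.)
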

\begin{proof}
Let $L^{\infty}(Y,\nu)\rtimes G\subsetneq N\subseteq L^{\infty}((Y,\nu)\times (X,\mu))\rtimes G$ be an intermediate von Neumann subalgebra. Then since $G\curvearrowright (Y,\nu)$ is an essential free action, we deduce from Suzuki's theorem \cite{suzuki} that $N=L^{\infty}(Q,\eta)\rtimes G$ for some intermediate factor $G\curvearrowright (Q,\eta)$ between $G\curvearrowright (Y,\nu)$ and $G\curvearrowright (Y,\nu)\times (X,\mu)$. By Theorem~\ref{thm:toricaction}, we deduce that $G\curvearrowright (Q,\eta)$ is conjugate to $G\curvearrowright (Y,\nu)\times (Z,\zeta)$ for some $G$-factor map $G\curvearrowright (X,\nu)\rightarrow (Z,\zeta)$. As $N\neq L^{\infty}(Y,\nu)\rtimes G$, we get that $G\curvearrowright (Z,\zeta)$ is not a trivial action and hence by \cite[lemma 3.5]{jiangskalski}, we know that $N$ has a non-Haagerup von Neumann subalgebra $L^{\infty}(Z,\zeta)\rtimes G$ and hence $N$ does not have the Haagerup property itself.  
\end{proof}
\newpage
\bibliographystyle{}\bibliography{name}
\end{document}